\documentclass[preprint,1p]{elsarticle}
\usepackage{amsmath}
\usepackage{amssymb}
\usepackage{graphicx}
\graphicspath{{./}}
\usepackage{xspace}
\usepackage{multirow}
\usepackage{mathbbol}
\usepackage{algorithm}
\usepackage{algpseudocode}
\usepackage{graphicx}
\usepackage{subcaption}
\usepackage{enumitem}

\setlist{topsep=12pt}
\setlist{itemsep=-3pt}

\usepackage{multirow}
\usepackage{adjustbox}
\usepackage{amsmath}

\usepackage{accents}

\usepackage{macros}

\usepackage[colorinlistoftodos]{todonotes}
\begin{document}

\begin{frontmatter}
\title{A structure-preserving numerical method for the compressible
Resistive-Hall-MHD system\tnoteref{t1}}
% \title{A structure-preserving numerical \mnblue{method} for \mnblue{the} compressible
% Resistive-Hall-MHD system}
%   \tnotetext[t1]{This research is funded by Swedish Research Council (VR)
%   under grant number 2021-04620.}
  \author[1]{Murtazo Nazarov}
  \ead{murtazo.nazarov@it.uu.se}
  \author[1]{Rafael Rodriguez-Velasco}
  \ead{rafael.rodriguez-velasco@it.uu.se}
  \author[2]{Ignacio Tomas}
  \ead{igtomas@ttu.edu}
  \address[1]{Department of Information Technology, Uppsala University, Sweden}
  \address[2]{Department of Mathematics \& Statistics Texas Tech University}
\tnotetext[t1]{Authors are listed alphabetically.}

\begin{abstract}
In this paper, we present a structure-preserving method for the compressible
resistive Hall-magnetohydrodynamics (MHD) model. The differential operator is
split into two parts: a hydrodynamic part consisting of the compressible Euler
equations, and a magnetic part consisting of a system coupling the Lorentz force
and the induction equation. The method uses continuous Lagrange elements for the
Euler part and a curl-conforming finite element space for the magnetic part. The
hydrodynamic part preserves the positivity of the density and internal energy,
the conservation of total energy, and the minimum principle for the specific
entropy. Owing to the choice of finite elements, the magnetic part preserves the
divergence involution constraint. The fluid part is solved using explicit
strong-stability-preserving Runge–Kutta (SSP-RK) methods, whereas the magnetic
part is solved by Crank-Nicholson method, which requires using Newton's method.
Coercivity estimates for the Jacobian of the corresponding Newton iteration are
presented. We introduce a high-order artificial resistivity to improve the
conditioning of the nonlinear residual and the invertibility of the Jacobian.
Several challenging benchmarks, including a smooth whistler wave, the
Orszag–Tang vortex for comparing resistive MHD with resistive Hall-MHD, and a
magnetic reconnection problem, are solved to validate the robustness and
accuracy of the method.

\end{abstract}

\begin{keyword}
Resistive Hall MHD \sep structure preserving \sep invariant
domain \sep involution constraints \sep energy-stability \sep magnetic reconnection
\end{keyword}

\end{frontmatter}

\section{Introduction}\label{sec:intro}

In this work we consider the numerical solution of the compressible Hall-MHD
system:
\begin{subequations}
\label{hallMHDsystem}
\begin{align}
\partial_t \rho
+ \diver{} \mom &= 0 \, ,\\
\partial_t \mom + \diver{}
(\rho^{-1}\mom \mom^\transp + \mathbb{I} p) &= \mu
\curl{}\Hfield \times \Hfield \\
\label{TotmePDE}
\partial_t \totme
+ \diver{}\big(\tfrac{\mom}{\rho} (\totme + p) \big) &=
\mu (\curl{}\Hfield \times \Hfield)\cdot \tfrac{\mom}{\rho}
+ \resist |\curl{}\Hfield|^2
\, , \\
\label{HeqnPDE}
\partial_t \Hfield - \curl{}(\tfrac{\mom}{\rho} \times \Hfield) &=
- \curl{}(\tfrac{\resist}{\mu} \curl{}\Hfield
+ \tfrac{d_i}{\rho} \curl{}\Hfield \times \Hfield)
\end{align}
\end{subequations}
where $\rho \in \mathbb{R}$ is the density, $\mom \in \mathbb{R}^3$ is the
momentum, $\mathbb{I} \in \mathbb{R}^{3 \times 3}$ is the identity matrix,
$\totme  \in \mathbb{R}$ is the total mechanical energy, and $\Hfield \in
\mathbb{R}^3$ is the magnetic field. Here the constant $\resist$ is the
resistivity while $d_i = \frac{m_i}{q_i}$: the constant $m_i > 0$ is the
specific ion mass while $q_i > 0$ is the specific ion charge, see for instance
\cite[p. 90]{Krall1974}. The term $\curl{}(\resist \curl{}\Hfield)$ in
the right hand side of \eqref{HeqnPDE} is resistive term, while the term
$\curl{}(\tfrac{d_i}{\rho} \curl{}\Hfield \times \Hfield)$ is the Hall term.
Note that resistive effects introduce the source of heat $\resist
|\curl{}\Hfield|^2$ in the right hand side of \eqref{TotmePDE}.

The MHD equations are widely used in astrophysics applications as well as
in nuclear fusion research \cite{Jardin2013, Freidberg1982}, where it is used
to study instabilities in plasma confinement \cite{Jorek2021, Sitenko1995}. The
MHD system can be understood as a formal asymptotic limit of the
two-fluid Euler-Maxwell model. More precisely it is the limit obtained under the
assumptions of infinite speed of light (equivalently, zero electric
permittivity, that is $\epsilon_0 \rightarrow 0^+$) and zero electron mass. Most
frequently, the Hall term is neglected during the derivation of the ideal MHD
model. The importance of the Hall term was first pointed out by James Lighthill
in \cite{Lighthill1960}. Derivations of the Hall-MHD system can be found in
\cite{Acherito2011, Wilmanski2015}. Since the early 2000s there has been a
growing body of scientific literature indicating that Resistive and Hall terms
are fundamental to reproduce magnetic reconnection rates \cite{Birn2001,
Shay2001, Prit2001, Homann2005} observed in practice.

Numerical solutions of the MHD system are vital to predict phenomena in various
scientific fields such as plasma physics and astrophysics. Furthermore, when
performing numerical simulations of the MHD system, it is crucial to ensure the
preservation of essential structure of the solution, such as positivity
properties, conservation of total energy, entropy-dissipation, and involution
constraints. For instance, the works of \cite{Balsara2012, Cheng2013, Wu2018,
Wu2018II}, along with references provided therein, represent
just a subset of the comprehensive research dedicated to achieving
positivity-preserving approximations for the compressible ideal MHD system.

Another very important property preserved by the ideal-MHD and
Resistive-Hall-MHD systems is the involution constraint of the magnetic field,
that is, the invariance of the magnetic field's divergence. Time-invariance of
the magnetic field implies that if the magnetic field is zero at initial time,
then, it should remain zero for all time. Without making any claim of
completeness, some references advancing numerical techniques that preserve some
aspect of the involution (e.g. locally divergence-free and divergence cleaning
methods) are \cite{Dedner2002, Li2005, Balsara2009, Londrillo2004}.

Overall, the literature on structure preserving methods for the ideal
compressible MHD system is reasonably developed. On the other hand, the list
mathematical references\footnote{By mathematical we mean: indexed by MathSciNet
database} advancing numerical schemes for the compressible Resistive-Hall-MHD
system is noticeably small. A fairly complete list of references obtained from
MathSciNet database, with specific focus on compressible Resistive-Hall-MHD, is
\cite{Toth2008, Toth2014, Balbas2009, Derigs2018, Strumik2017, Arnold2008,
Chacon2025}. To the best of our knowledge, there is no literature with a focus
on structure preservation of the compressible Resistive-Hall-MHD system.

The development of numerical methods for the MHD systems is inextricably
associated to the divergence formulation. The divergence formulation of the
ideal MHD system offers a few challenges that are hard to ignore. Among them,
the divergence formulation is not Galilean invariant, it is not symmetrizable,
and its Jacobian is defective (it does not possess a full set of eigenvectors).
From the purely computational perspective, the Riemann problem of the ideal MHD
in divergence form is not well defined unless the condition
$[\![\Bfield]\!]\cdot\normal = 0$ holds across discontinuity surfaces
\cite{Torr2003}. Computationally, some of these problems may be alleviated with
the inclusion of modifications of the scheme, for instance, with the use of
constraint transport techniques, see for instance \cite{Ross2006} and
references therein.

Regarding the existence of solutions for compressible Hall-MHD models very
little is known. An extensive literature search reveals that most, if not all,
the Analysis literature is limited to short-time existence of strong
solutions \cite{Fan2015,Fan2016,Gao2016, Tao2017, Lai2019}. Notably, to the best
of our knowledge, there are no existence results for the compressible Hall-MHD
model without resistivity. On the other hand, the role of resistivity, for the
incompressible Hall-MHD model, is well understood: resistivity is fundamental to
achieve well-posedness of the incompressible Hall-MHD model. In a series of highly
cited publications \cite{Degond2014, Chae2016, Jeong2022} it was
proven that the incompressible Hall-MHD model without resistivity is ill-posed.
Based on this record of Analysis results, it may be reasonable to \emph{assume}
that resistivity is fundamental to guarantee well-posedness of the compressible
Hall-MHD model as well.

The current work is a continuation of the ideas advanced in \cite{Dao2024}. In
that work, the authors advanced a proof of the minimum principle of the specific
entropy as well as entropy-dissipation inequalities that do \emph{not} involve
viscous regularization of the magnetic field. These results indicate that the
ideal MHD is not a conservation law in the vanishing-viscosity sense of
\cite{Lax1957,Chueh1977, Bianchini2005}. Inspired by this result, we developed a
numerical scheme that decomposes the ideal MHD system into Euler's equation and a 
purely Hamiltonian PDE that couples the Lorentz force and the induction
equation. Such scheme is capable of preserving positivity properties, total
energy, involution constraints, and entropy-dissipation inequalities with no
divergence-cleaning or related tools. Most importantly, the induction equation
is not stabilized in any form or fashion: which is entirely consistent with the
vanishing-viscosity argument.

In this paper, we extend the ideas of \cite{Dao2024} to the case of the
Resistive-Hall-MHD model. We develop a new scheme that preserves positivity of
the density, positivity of the internal energy, total mechanical energy,
minimum principle of the specific entropy. We also prove that the scheme
preserves entropy-dissipation inequalities if the numerical method used to
solve Euler's system preserves such a property. The scheme is semi-implicit:
Euler's system is advanced explicitly, while the source system consisting of
the Lorentz force coupled to the induction equation is advanced in a
time-implicit fashion.

One of the most important differences between the present work and
\cite{Dao2024} is the behaviour of nonlinear solvers. In our previous work we
used Newton's method to solve the nonlinear residual associated to the coupled
system involving Lorentz force and the induction equation. In that work,
nonlinear performance turned out to be exceptional across several tests
and mesh types: Newton's solver never exceeded 4 iterations per
time-step. The addition of the Resistive term is benign and does not change
the behaviour/performance of Newton's scheme. However, the addition of the Hall
terms is rather delicate. Our initial computations for the Resistive-Hall-MHD
system, using uniform structured meshes, delivered comparable performance along
the lines of at most 4-5 Newton iterations per time step. However, such results
did not translate to other mesh types. Nonlinear solver performance turned out
to be rather inconsistent across several types of meshes: either too many Newton
iterations were required or the Jacobian was severely ill-conditioned. This is
not strictly speaking a defect of the scheme, but rather a problem associated
with our choice of nonlinear solver. Since we were unwilling to give up Newton's
method\footnote{For instance, we could have replaced Newton's method with some
form of accelerated fixed-point method.} and its second-order convergence,
we decided to modify the scheme. Coercivity analysis of the Jacobian
reveals a loss of invertibility whenever the electron speed is too large.
Therefore, we devised an artificial resistivity for the induction equation that
improves well-posedness of the Jacobian, thereby stabilizing the behaviour of
nonlinear Newton iterations.

One of the most important aspects of scientific computing is Verification and
Validation \cite{Babus2004}. Generally speaking, verification consists in
checking that the method is convergent. The standard quantitative test is
computation of convergence rates (or log-log plots) using exact
solutions\footnote{An alternative approach is using the method of manufactured
solutions. However, such an approach does not evaluate the ability of the scheme
to approximate autonomous dynamics. Overall, in the context of
nonlinear hyperbolic-like problems, manufactured solutions is not a highly
regarded approach for code verification.}. However, exact analytical solutions
for the compressible-Resistive-MHD model are not available at this point in
time. This makes quantitative evaluation of the scheme very difficult. On the
other hand, qualitative tests are just limited to graphical comparison. Again,
given the extremely rarefied body of literature on this model, there are just a
few meaningful computational results, most of which use very coarse meshes to
be considered reference results. For instance, with the exception of
\cite{Chacon2025}, pretty much every computation of the Resistive-Hall-MHD
model available in the literature uses 128$\times$128 cells for the GEM
challenge problem.

In this regard, one of our most important contributions is the development of
both quantitative tests and reference computational results. We present a
quantitative test consisting of a smooth-traveling wave, in the linear
resistive regime that allows us to verify some aspects of the accuracy of the
method. We illustrate the computation of the GEM challenge with resolutions of
up to 1024$\times$1024 elements. Our long-term goal is to make the
corresponding data available to the wider scientific community. We also advance
direct comparisons of Resistive-MHD vs Resistive-Hall-MHD with Orzag-Tang
vortex test using resolutions of up to 724$\times$724 cells. Finally, to the
best of our knowledge, there are no computations of the Resistive-Hall-MHD
model using unstructured meshes, with most, if not all results using structured
cartesian meshes. It is well-known that most numerical methods use to solve
hyperbolic-like problems are very sensitive to mesh-imprint artifacts. In this
work, we present a series of results using structured isotropic meshes,
structured anisotropic, fully unstructured quasi-uniform meshes, and criss-cross
meshes, showing that our method delivers comparable results regardless of the
choice of mesh.

% \RED{***The following paragraph in Red has to be revisited in its entirety once
% the paper is written***} \\

{The paper is organized as follows: in Section \ref{sec:split} we present a
splitting of the differential operator \eqref{hallMHDsystem}, prove the
properties preserved by each operator and show that the splitting is compatible
with the sequential preservation of such properties. In Section
\ref{sec:SpaceDisc} we summarize the notation related to the space
discretization. In Section \ref{Sec:NumericalSource} we provide the space and
time discretization of the source-system associated to the Lorentz force
and the induction equation (containing the ideal, Hall, and resistive terms).
The main theoretical results of this scheme are independent of the choice of
numerical scheme used to solve Euler's system. Therefore, in Section
\ref{sec:hypsolver_ass} we outline the main assumptions made about the
hyperbolic solver. The precise choice of hyperbolic solver used for all our
computations is described in Appendix B of our previous work \cite{Dao2024}. In
Section \ref{Sec:ArtResMotiv} we elaborate our motivations for the development
of an artificial viscosity. In Section \ref{Sec:art_resist} we make precise the
artificial viscosity used for all our computations. In Section
\ref{sec:all_the_scheme} we provide an algorithmic summary of
the whole operator splitting scheme. In Section \ref{sec:numeric} we
present our numerical results.}

{Finally, we mention that this paper has three appendices. \ref{app:thermo}
contains a summary of thermodynamic properties which are used in the context of
Sections \ref{hallMHDsystem} and \ref{Sec:NumericalSource}. \ref{app:jacobian}
contains the derivation of the Jacobian and its coercivity analysis: this
appendix and its contents play a very important role in this paper.
\ref{app:two_and_half} describes the implementation of the 2.5-space dimensions
implementation.}

% In Section \ref{sec:hypsolver_ass} we summarize the assumptions made about
% the
% hyperbolic solver used to solve Euler's system. In this work the choice of
% hyperbolic solver is

%%%%%%%%%%%%%%%%%%%%%%%%%%%%%%%%%%%%%%%%%%%%%%%%%%%%%%%%%%%%%%%%%%%%%%%%%%%%%%%%
%%%%%%%%%%%%%%%%%%%%%%%%%%%%%%%%%%%%%%%%%%%%%%%%%%%%%%%%%%%%%%%%%%%%%%%%%%%%%%%%
%%%%%%%%%%%%%%%%%%%%%%%%%%%%%%%%%%%%%%%%%%%%%%%%%%%%%%%%%%%%%%%%%%%%%%%%%%%%%%%%

\section{Splitting of the differential operator}\label{sec:split}

We start by splitting \eqref{hallMHDsystem} into two differential operators
\begin{align}
\label{OperatorOne}
\text{Operator }\#1
\left\{
\begin{aligned}
\partial_t \rho
+ \diver{} \mom &= 0 \, ,\\
\partial_t \mom + \diver{}
(\rho^{-1}\mom \mom^\transp + \mathbb{I} p) &= \bzero \\
%
% \label{TotmePDE}
\partial_t \totme
+ \diver{}\big(\tfrac{\mom}{\rho} (\totme + p) \big) &= 0 \, , \\
\partial_t \Hfield &= \bzero
\end{aligned}
\right.
\end{align}
and
\begin{align}
\label{OperatorTwo}
\text{Operator }\#2
\left\{
\begin{aligned}
\partial_t \rho &= 0 \, ,\\
\partial_t \mom - \mu \curl{}\Hfield \times \Hfield &= \bzero \\
\partial_t \totme - \mu (\curl{}\Hfield \times \Hfield)\cdot \tfrac{\mom}{\rho}
- \resist |\curl{}\Hfield|^2 &= 0\\
\partial_t \Hfield - \curl{}(\tfrac{\mom}{\rho} \times \Hfield)
+ \curl{}(\tfrac{\resist}{\mu} \curl{}\Hfield
+ \tfrac{d_i}{\rho} \curl{}\Hfield \times \Hfield)
&= \bzero
\end{aligned}
\right.
\end{align}
We briefly discuss the properties preserved by each operator. We will assume
that the pressure is computed from a complete Equation of State (EOS) as
described by \eqref{PressureEpist}. In particular, we assume that the EOS
satisfies the assumptions \eqref{posTempAssump} and \eqref{convexAssump}, see
\ref{app:thermo} for more details. In Section \ref{Sec:NumericalScheme} we
outline the development of numerical schemes that preserve such properties in
the fully-discrete setting.

\begin{proposition}[Properties preserved by Operator \#1]
\label{prop:op_one_prop} Let $t_1 <
t_2$, consider the time interval $[t_1, t_2]$ and that $\mom\cdot\normal
= 0$ in the entirety of the boundary $\partial\domain$, compactly supported
initial data, and that $t_2 - t_1$ is small enough such that no wave
reaches the boundary for any $t \in [t_1, t_2]$. Alternatively, assume periodic
boundary conditions. Then, the solution operator \eqref{OperatorOne} satisfies
the following properties:
\begin{align}
\label{ConsPropGlobal}
\begin{aligned}
\int_{\Omega} \rho(\xcoord, t_2) \dx = \int_{\Omega} \rho(\xcoord, t_1)\dx \ ,
\ \
\int_{\Omega} \mom(\xcoord, t_2) \dx = \int_{\Omega} \mom(\xcoord, t_1)\dx \\
\int_{\Omega} \totme(\xcoord, t_2) \dx = \int_{\Omega} \totme(\xcoord, t_1)\dx
\ ,  \ \
\int_{\Omega} \Hfield(\xcoord, t_2) \dx = \int_{\Omega} \Hfield(\xcoord,
t_1)\dx
\end{aligned}
\end{align}
% \RED{*** the statement of the momentum is wrong ****} \\
Note that the conservation properties on the magnetic field follow trivially
since $\Hfield(\xcoord, t_2)$ $\equiv \Hfield(\xcoord, t_1)$ in the context of
Operator \#1. Therefore, we also have that
\begin{align}
\label{OpOneTotalNRG}
\int_{\Omega} \totme(\xcoord, t_2) + \tfrac{\mu}{2} |\Hfield(\xcoord, t_2)|^2\dx
=
\int_{\Omega} \totme(\xcoord, t_1) + \tfrac{\mu}{2} |\Hfield(\xcoord, t_1)|^2\dx
\end{align}

Regarding pointwise properties: we have that
\begin{align}
\label{EulerPositDens}
\inf_{(\xcoord,t) \in \Omega \times [t_1,t_2]} \rho(\xcoord,t) \geq 0 \, ,
\end{align}
provided that the initial data $\rho(\xcoord, t_1) \geq 0$ for all $\xcoord \in
\Omega$, for the specific entropy we have that:
\begin{align*}
\inf_{\xcoord \in \Omega }
s(\rho(\xcoord, t_2),e(\state(\xcoord, t_2))
\geq
\inf_{\xcoord \in \Omega} s(\rho(\xcoord, t_1),e(\state(\xcoord, t_1))
\end{align*}
and the mathematical entropy satisfies
\begin{align}
\label{EntDissipPointwise}
\partial_t \eta(\state) + \diver{} \mathbb{q}(\state) \leq 0 \ \
\text{for all } (\xcoord, t) \in \domain \times [t_1, t_2]
\end{align}
where $\{\eta, \mathbb{q}\} = \{- \rho s, -\mom s\}$ is the entropy-flux pair.
Integration on $\Omega$ of the pointwise estimate \eqref{EntDissipPointwise}
naturally leads to the inequality:
\begin{align*}
\int_{\domain} \eta(\state(\xcoord, t_2)) \dx
\leq
\int_{\domain} \eta(\state(\xcoord, t_1)) \dx
\end{align*}
\end{proposition}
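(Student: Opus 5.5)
The proposition collects properties of the compressible Euler system plus the trivial equation $\partial_t \Hfield = \bzero$. I would treat the hydrodynamic and magnetic components separately and invoke standard Euler theory, interpreting solutions as entropy (vanishing-viscosity) solutions.

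\textbf{Global conservation.} I would integrate each equation of \eqref{OperatorOne} over $\Omega \times [t_1,t_2]$ and apply the divergence theorem.
\begin{itemize}
\item The boundary fluxes are $\mom\cdot\normal$, $(\rho^{-1}\mom\mom^\transp + \mathbb{I}p)\normal$ and $\tfrac{\mom}{\rho}(\totme+p)\cdot\normal$.
\item Under the compact-support hypothesis with no wave reaching $\partial\Omega$, the state near the boundary is the constant far-field state. With $\mom = \bzero$ there, the mass and energy fluxes vanish identically.
\item For momentum, the remaining flux is $\int_{\partial\Omega} p\,\normal$ with $p$ constant near $\partial\Omega$, which integrates to zero for a closed surface.
\item In the periodic case, all boundary contributions cancel pairwise.
\item Since $\partial_t\Hfield=\bzero$, we have $\Hfield(\cdot,t_2)=\Hfield(\cdot,t_1)$ pointwise. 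This gives the $\Hfield$ identity, and \eqref{OpOneTotalNRG} follows by adding $\tfrac{\mu}{2}\int|\Hfield|^2$, which is unchanged, to the conserved $\int\totme$.
\end{itemize}

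\textbf{Pointwise properties.} I would rely on the theory of entropy solutions via viscous regularization $\partial_t \state + \diver{}\mathbb{f}(\state) = \epsilon\Delta\state$, or a Guermond–Popov style regularization, together with the thermodynamic assumptions \eqref{posTempAssump}, \eqref{convexAssump} of \ref{app:thermo}.
\begin{itemize}
\item \emph{Density.} The continuity equation is a transport equation $\partial_t\rho + \boldsymbol{v}\cdot\nabla\rho = -\rho\,\diver{}\boldsymbol{v}$. Along characteristics, $\rho$ is multiplied by $\exp(-\int \diver{}\boldsymbol{v})$, so it remains nonnegative. In the regularized setting, I would use the parabolic minimum principle instead.
\item \emph{Specific entropy.} For smooth solutions, the Gibbs identity $T\,ds = de - p\rho^{-2}d\rho$ combined with the mass, momentum and energy equations yields $\partial_t s + \boldsymbol{v}\cdot\nabla s = 0$. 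Hence $s$ is transported and its infimum is preserved.
\item \emph{Shocks.} To handle discontinuities, I would apply Tadmor's minimum principle: $\partial_t(\rho s)+\diver{}(\mom s)\ge 0$ in the viscous limit, with the infimum of $s$ preserved. The key step is showing that, for the regularized problem, $s$ satisfies a parabolic inequality $\partial_t s + \boldsymbol{v}\cdot\nabla s - \epsilon\Delta s - \epsilon(\ldots)\ge 0$. The lower-order terms are nonnegative by the concavity of $s$ in $(\rho,e)$ from \eqref{convexAssump}.
\item \emph{Entropy dissipation.} Inequality \eqref{EntDissipPointwise} follows from multiplying the regularized system by $\nabla\eta$. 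Convexity of $\eta=-\rho s$ makes the viscous term $-\epsilon\nabla\state^\transp\nabla^2\eta\nabla\state\le 0$, and I would pass $\epsilon\to0$ distributionally.
\item \emph{Integrated inequality.} Integrating \eqref{EntDissipPointwise} over $\Omega$ with the same boundary arguments as above gives the final integrated inequality.
\end{itemize}

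\textbf{Main obstacle.} The hard step is justifying the minimum principle for $s$ and the entropy inequality for weak (non-smooth) solutions. I would not reprove these results. Instead, I would cite the known results (Tadmor; Guermond–Popov for the parabolic regularization), after checking that their hypotheses match the complete EOS assumptions in \ref{app:thermo}.
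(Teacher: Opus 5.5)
Your proposal is correct and follows the same route as the paper, which gives no proof beyond citing the divergence theorem for \eqref{ConsPropGlobal}, Guermond--Popov for nonnegativity of $\rho$ and (with Tadmor) for the minimum principle of $s$, and vanishing viscosity plus convexity of $\eta$ for \eqref{EntDissipPointwise}; you also correctly note that the momentum boundary term $\int_{\partial\domain} p\,\normal\,\mathrm{d}s$ vanishes only because $p$ is constant near $\partial\domain$ (no wave has reached it), a point the paper glosses over. One small slip: the sign of the viscous dissipation term comes from concavity of $s$ in $(v,e)$, i.e.\ convexity of $\eta=-\rho s$ in $\state$ as in \eqref{convexAssump}, not from concavity of $s$ in $(\rho,e)$, which already fails for the ideal gas.
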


We do not advance a proof of this proposition since it is a recollection of
well-known results about Euler's system. Conservation properties
\eqref{ConsPropGlobal} are just a consequence of the divergence theorem. The
non-negativity of the density \eqref{EulerPositDens} is proved, for instance, in
\cite{Guermond2014}. For the specific case of the ideal gas equation of state,
the minimum principle of the specific entropy was proved for the first time in
\cite{Tadmor1986}. For the case of (arbitrary) thermodynamically
stable\footnote{An equation of state is thermodynamically stable if the
specific internal energy $e = e(s,v)$ is a convex function with respect to the
specific entropy $s$ and specific volume $v$. See \ref{app:thermo} for more
details.} equations of state, a proof
of the minimum principle of the specific entropy can be found in
\cite{Guermond2014}. The entropy-dissipation inequality
\eqref{EntDissipPointwise} is a consequence of the vanishing viscosity principle
and the convexity of $\eta(\state)$ with respect to $\state = [\rho, \mom,
\totme]^\transp$, see for instance \cite{Godlewski1996}. \\

The following proposition makes significant use the thermodynamics summary in
\ref{app:thermo}. The reader unfamiliar with the basics of thermodynamics is
encouraged to read the Appendix before reading Proposition
\ref{prop:op_one_prop}.

\begin{proposition}[Properties preserved by Operator \#2] Let $t_1 < t_2$,
assume that the initial data is such that
\begin{align}
\label{OpTwoPointAss}
\begin{gathered}
\rho(\xcoord, t_1) > 0 \  , \ \
e(\xcoord, t_1) := (\totme - \tfrac{1}{2}\rho |\vel|^2)(\xcoord, t_1) > 0 \\
\ \ \text{and} \ \
\theta(\rho(\xcoord, t_1), e(\xcoord, t_1)) := \big[\tfrac{\partial}{\partial e}
s(\tfrac{1}{\rho(\xcoord, t_1)}, e(\xcoord, t_1))\big]^{-1} > 0
\ \ \text{for all }\xcoord \in \domain ,
\end{gathered}
\end{align}
here $\theta(\rho(\xcoord, t_1), e(\xcoord, t_1))$ is the temperature at time
$t_1$. Then, Operator \#2 as described in \eqref{OperatorTwo}, preserves the
following properties:
\begin{align}
& \begin{aligned}\label{OpTwoPropOne}
& \int_{\domain} (\tfrac{1}{2}\rho|\vel|^2
+ \tfrac{\mu}{2}|\Hfield|^2)(\xcoord, t_2) \dx \\
& \ \ \ + \int_{t_1}^{t_2} \int_{\partial\domain} \Big(
\resist \curl{}\Hfield - \mu (\vel \times \Hfield) + \tfrac{\mu d_i}{\rho}
\curl{}\Hfield \times
\Hfield \Big)
\cdot (\Hfield \times \normal) \ds \mathrm{d}t \\
& \ \ \ =
\int_{\domain} (\tfrac{1}{2}\rho|\vel|^2
+ \tfrac{\mu}{2}|\Hfield|^2)(\xcoord, t_1) \dx
- \int_{t_1}^{t_2} \int_{\domain} \resist |\curl{}\Hfield|^2 \dx  \mathrm{d}t
\end{aligned} \\
\label{OpTwoInte}
&(\totme - \tfrac{1}{2} \rho|\vel|^2)(\xcoord, t_2) =
(\totme - \tfrac{1}{2} \rho|\vel|^2)(\xcoord, t_1)
+ \int_{t_1}^{t_2} \resist |\curl{}\current|^2 \mathrm{d}t \\
\label{OpTwoTemp}
&\theta(\rho(\xcoord, t_2), e(\xcoord, t_2))
\geq
\theta(\rho(\xcoord, t_1), e(\xcoord, t_1)) \\
\label{OpTwoLittleS}
&s(\rho(\xcoord, t_2), e(\xcoord, t_2))
\geq
s(\rho(\xcoord, t_1), e(\xcoord, t_1)) \\
\label{OpTwoEta}
&\eta(\rho(\xcoord, t_2), e(\xcoord, t_2))
\leq
\eta(\rho(\xcoord, t_1), e(\xcoord, t_1))
\end{align}
Note that properties \eqref{OpTwoInte}-\eqref{OpTwoEta} are pointwise
properties, that is, they hold for every $\xcoord \in \domain$.
\end{proposition}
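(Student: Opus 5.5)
The proof will be a direct computation on smooth solutions of \eqref{OperatorTwo}. Two facts drive everything: $\rho$ is frozen in time, and the Lorentz force does no net work on the internal energy. Write $\vel = \mom/\rho$, so that $\partial_t \vel = \tfrac{\mu}{\rho}\curl{}\Hfield\times\Hfield$ pointwise.

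\textbf{Internal energy, \eqref{OpTwoInte}.} Use $\partial_t\rho = 0$ to compute
\begin{align*}
\partial_t(\tfrac12\rho|\vel|^2) = \rho\vel\cdot\partial_t\vel = \mu(\curl{}\Hfield\times\Hfield)\cdot\vel .
\end{align*}
Subtracting this from the energy equation of \eqref{OperatorTwo} gives
\begin{align*}
\partial_t(\totme - \tfrac12\rho|\vel|^2) = \resist|\curl{}\Hfield|^2 .
\end{align*}
Integrating in time at each fixed $\xcoord$ yields \eqref{OpTwoInte}. I read the $\current$ appearing there as $\Hfield$.

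\textbf{Energy balance, \eqref{OpTwoPropOne}.} Take the dot product of the induction equation with $\mu\Hfield$ and integrate over $\domain$. Apply the identity $\int_\domain \curl{}\mathbf{a}\cdot\Hfield = \int_\domain \mathbf{a}\cdot\curl{}\Hfield + \int_{\partial\domain}(\mathbf{a}\times\normal)\cdot\Hfield$, and note that $(\mathbf{a}\times\normal)\cdot\Hfield = \mathbf{a}\cdot(\normal\times\Hfield)$, which gives the boundary term up to the sign of $\Hfield\times\normal$.

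The three volume terms behave as follows. The ideal term gives $-\mu\int(\vel\times\Hfield)\cdot\curl{}\Hfield = \mu\int(\curl{}\Hfield\times\Hfield)\cdot\vel$, which cancels exactly against the kinetic-energy rate computed above. The resistive term gives $\resist\int|\curl{}\Hfield|^2$. The Hall term gives $\tfrac{\mu d_i}{\rho}(\curl{}\Hfield\times\Hfield)\cdot\curl{}\Hfield = 0$, by orthogonality of the cross product.

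Adding the kinetic-energy equation and integrating in time then produces \eqref{OpTwoPropOne}. The only delicate point in this step is sign bookkeeping for the boundary term, which I will fix by the convention above.

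\textbf{Thermodynamic inequalities, \eqref{OpTwoTemp}--\eqref{OpTwoEta}.} Since $\rho$ is constant in time, \eqref{OpTwoInte} says the specific internal energy is nondecreasing at each point while $\rho$ (equivalently $v = 1/\rho$) stays fixed. From the appendix (\ref{app:thermo}) I will invoke the Gibbs relation $\partial_e s(v,e) = 1/\theta > 0$. This gives \eqref{OpTwoLittleS} directly, since $s$ is increasing in $e$ at fixed $v$, and positivity of $\theta$ is preserved along the path.

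Next, convexity of $e(s,v)$, from assumption \eqref{convexAssump}, gives $\partial_s\theta = \partial_{ss}e \ge 0$. Since $s$ increases at fixed $v$, the temperature $\theta(v, s)$ does not decrease, which is \eqref{OpTwoTemp}. Finally, $\eta = -\rho s$ with $\rho$ fixed and $s$ nondecreasing gives \eqref{OpTwoEta}.

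The main obstacle is justifying the chain of monotonicity through the EOS assumptions: $\theta > 0$ must hold throughout, so that $s$ is invertible in $e$, and convexity must give $\partial_s\theta \ge 0$. I will handle this with a continuity argument in time, starting from the positivity in \eqref{OpTwoPointAss}.
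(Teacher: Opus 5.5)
Your argument is essentially the paper's proof. It uses the same kinetic-energy identity from $\partial_t\rho=0$ and the same subtraction yielding $\partial_t(\totme-\tfrac12\rho|\vel|^2)=\resist|\curl{}\Hfield|^2$. It also tests the induction equation with $\mu\Hfield$ in the same way: the ideal term cancels the Lorentz work and the Hall term drops by orthogonality.

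One thing to correct: the Green formula you wrote has the wrong sign. From $\nabla\cdot(\mathbf{a}\times\Hfield)=\Hfield\cdot\curl{}\mathbf{a}-\mathbf{a}\cdot\curl{}\Hfield$ one gets
\begin{align*}
\int_\domain \curl{}\mathbf{a}\cdot\Hfield \,\mathrm{d}x = \int_\domain \mathbf{a}\cdot\curl{}\Hfield \,\mathrm{d}x + \int_{\partial\domain}(\normal\times\mathbf{a})\cdot\Hfield \,\mathrm{d}s ,
\end{align*}
and $(\normal\times\mathbf{a})\cdot\Hfield=\mathbf{a}\cdot(\Hfield\times\normal)$. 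This is exactly the boundary term in \eqref{OpTwoPropOne}. With $\normal$ the outward normal there is no sign convention left to choose, so there is nothing to adjust ``by convention''.

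The only other difference is the order of the thermodynamic step. The paper works in $(v,e)$ and proves the temperature bound first:
\begin{align*}
\partial_t\theta=\partial_e\theta\,\partial_t e=-\theta^2\,\partial_e^2 s\,\partial_t e\ge 0
\end{align*}
by \eqref{convexAssump}, which holds regardless of the sign of $\theta$. Positivity of $\theta$ on $[t_1,t_2]$ then follows at once from \eqref{OpTwoPointAss}. After that, $\partial_t s=\theta^{-1}\partial_t e\ge 0$ gives \eqref{OpTwoLittleS}, and \eqref{OpTwoEta} follows since $\rho$ is frozen.

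Your route, entropy first and then $\partial_s\theta=\partial_s^2 e=-\theta^3\,\partial_e^2 s\ge 0$, is also valid. Note, however, that \eqref{convexAssump} is stated as concavity of $s(v,e)$, so translating it into convexity of $e(s,v)$ already uses $\theta>0$. That is why you need the continuity argument, which the paper's ordering avoids.
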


%%%%%%%%%%%%%%%%%%%%%%%%%%%%%%%%%%%%%%%%%%%%%%%%%%%%%%%%%%%%%%%%%%%%%%%%%%%%%%%%
%%%%%%%%%%%%%%%%%%%%%%%%%%%%%%%%%%%%%%%%%%%%%%%%%%%%%%%%%%%%%%%%%%%%%%%%%%%%%%%%
%%%%%%%%%%%%%%%%%%%%%%%%%%%%%%%%%%%%%%%%%%%%%%%%%%%%%%%%%%%%%%%%%%%%%%%%%%%%%%%%

% \todo[inline]{Why this proof has a number 2.1?}
% Answer: it's not our problem. It's a problem of the editors. They will
% take care of it.

\begin{proof} We start by noting that since $\partial_t \rho = 0$, Operator \#2
can be rewritten as:
\begin{subequations}
\begin{align}
\label{OperatorTwoVel}
\rho \partial_t \vel - \mu \curl{}\Hfield \times \Hfield &= \bzero \\
\label{OperatorTwoTotme}
\partial_t \totme - \mu (\curl{}\Hfield \times \Hfield)\cdot \vel
- \resist |\curl{}\Hfield|^2 &= 0\\
\label{OperatorTwoH}
\partial_t \Hfield + \curl{}(\tfrac{\resist}{\mu} \curl{}\Hfield
- \vel \times \Hfield
+ \tfrac{d_i}{\rho} \curl{}\Hfield \times \Hfield)
&= \bzero
\end{align}
\end{subequations}
Now, we multiply \eqref{OperatorTwoVel} by $\vel$ and \eqref{OperatorTwoH} by
$\mu \Hfield$ and integrate in space to obtain:
\begin{align}
\label{SourceMomEnergy}
\int_{\domain} \partial_t \big(\tfrac{1}{2}\rho |\vel|^2\big)
- \mu (\curl{}\Hfield \times \Hfield)\cdot \vel \dx &= \bzero \\
\nonumber
\int_{\domain} \tfrac{1}{2} \mu |\Hfield|^2
+ \curl{}(\resist \curl{}\Hfield - \mu \vel \times \Hfield
+ \tfrac{\mu d_i}{\rho} \curl{}\Hfield \times \Hfield)\cdot\Hfield \dx
&= \bzero \, .
\end{align}
Now, integrating by parts this last equation we obtain:
\begin{align}
\label{sourceIntbyParts}
\begin{aligned}
&\int_{\domain} \tfrac{1}{2} \mu |\Hfield|^2
+ (\resist \curl{}\Hfield - \mu \vel \times \Hfield
+ \tfrac{\mu d_i}{\rho} \curl{}\Hfield \times \Hfield)\cdot\curl{}\Hfield \dx \\
&\ \ \  + \int_{\partial\domain} (\resist \curl{}\Hfield - \mu \vel \times
\Hfield
+ \tfrac{\mu d_i}{\rho} \curl{}\Hfield \times \Hfield) \cdot (\Hfield \times
\normal) \ds
= \bzero \, ,
\end{aligned}
\end{align}
adding \eqref{SourceMomEnergy} to \eqref{sourceIntbyParts} we obtain:
\begin{align}
\begin{aligned}
& \tfrac{\partial}{\partial t} \int_{\domain} \tfrac{1}{2}\rho|\vel|^2
+ \tfrac{\mu}{2}|\Hfield|^2 \dx \\
& \ \ + \int_{\partial\domain} \Big(
\resist \curl{}\Hfield - \mu (\vel \times \Hfield) + \tfrac{\mu d_i}{\rho}
\curl{}\Hfield \times
\Hfield \Big)
\cdot (\Hfield \times \normal) \ds
=
- \int_{\domain} \resist |\curl{}\Hfield|^2 \dx \\
\end{aligned}
\end{align}
Integrating this expression in time between time $t_1$ and $t_2$ then
\eqref{OpTwoPropOne} follows. Now we multiply \eqref{OperatorTwoVel} by $\vel$
and subtract the result from \eqref{OperatorTwoTotme} to obtain
\begin{align}
\label{Op2consEnergyII}
\tfrac{\partial}{\partial t} \big(\totme - \tfrac{1}{2} \rho|\vel|^2 \big)
= \resist |\curl{}\current|^2 .
\end{align}
Integrating \eqref{Op2consEnergyII} in time between time $t_1$ and $t_2$ then
\eqref{OpTwoInte} follows. Since $\partial_t \rho = 0$ in the context of
Operator \#2, we can divide \eqref{Op2consEnergyII} by $\rho$ to obtain:
\begin{align}
\label{Op2consEnergyIII}
\tfrac{\partial e}{\partial t}
=
\tfrac{\partial}{\partial t}
\big(\tfrac{\totme}{\rho} - \tfrac{1}{2}|\vel|^2 \big)
=
\tfrac{\resist}{\rho} |\curl{}\current|^2 \, ,
\end{align}
therefore the specific internal energy can only increase during the evolution
of Operator \#2. Since $\partial_t \rho = 0$ and $\frac{\partial
\theta}{\partial e} \geq 0$, see convexity Assumption
\eqref{convexAssump}-\eqref{tempMono}, then \eqref{Op2consEnergyIII} implies
that the temperature $\theta$ can only increase during the evolution of Operator
\#2. More precisely, using the fundamental theorem of calculus, the chain rule,
the thermodynamic relationships in \eqref{PressureEpist}, and identity
\eqref{Op2consEnergyIII} we have that:
\begin{align}
\label{PositThetaProof}
\begin{aligned}
&\theta(\rho(\xcoord, t_2), e(\xcoord, t_2))
- \theta(\rho(\xcoord, t_1), e(\xcoord, t_1)) \\
& \ \ \ = \int_{t_1}^{t_2} \partial_t \theta(\rho,e) \mathrm{d}t
= \int_{t_1}^{t_2}
\frac{\partial \theta}{\partial e} \frac{\partial e}{\partial t}\mathrm{d}t \\
& \ \ \ =
\int_{t_1}^{t_2}
\frac{\partial }{\partial e} \Big[\frac{\partial s}{\partial e}\Big]^{-1}
\frac{\partial e}{\partial t}\mathrm{d}t
=
- \int_{t_1}^{t_2}
\Big[\frac{\partial s}{\partial e}\Big]^{-2}
\frac{\partial^2 s}{\partial^2 e}
\frac{\partial e}{\partial t}\mathrm{d}t \\
&\ \ \ =
- \int_{t_1}^{t_2}
\theta^2 \frac{\partial^2 s}{\partial^2 e}
\frac{\resist}{\rho} |\curl{}\current|^2 \mathrm{d}t \geq 0 \, \ \  \
\text{for all }\xcoord \in \domain
\end{aligned}
\end{align}
where the last inequality follows from the convexity assumption
$\frac{\partial^2 s}{\partial^2 e} \leq 0 $, see \eqref{tempMono}. Combining
assumption \eqref{OpTwoPointAss} with \eqref{PositThetaProof} establishes
that $\theta(\rho(\xcoord, t), e(\state(\xcoord, t)))$ can only take
strictly positive values in the interval $[t_1, t_2]$. Similarly, since
$\partial_t \rho = 0$ in the context of Operator \#2, using the thermodynamic
relationship $\tfrac{\partial
s}{\partial e} = \frac{1}{\theta}$, and identity \eqref{Op2consEnergyIII} we
obtain:
\begin{align}
\label{sminProof}
\begin{aligned}
& s(\rho(\xcoord, t_2), e(\state(\xcoord, t_2)))
- s(\rho(\xcoord, t_1), e(\state(\xcoord, t_1))) \\
& \ \ \ = \int_{t_1}^{t_2} \partial_t s(\rho,e) \mathrm{d}t
= \int_{t_1}^{t_2} \frac{\partial s}{\partial e}
 \frac{\partial e}{\partial t} \mathrm{d}t
= \int_{t_1}^{t_2} \frac{1}{\theta}
\frac{\resist}{\rho} |\curl{}\current|^2 \mathrm{d}t \geq 0
\end{aligned}
\end{align}
where the inequality follows from the fact that $\theta$ can only take positive
values in the interval $[t_1, t_2]$. Therefore \eqref{sminProof} leads to the
proof of \eqref{OpTwoLittleS}. Now, reorganizing \eqref{sminProof} and
multiplying both sides of the equality by $ - \rho(\xcoord, t_1)$ we obtain:
\begin{align}
\label{etaDissPointI}
\begin{aligned}
& - \rho(\xcoord, t_1) s(\rho(\xcoord, t_2), e(\state(\xcoord, t_2))) \\
& \ \ \ = - \rho(\xcoord, t_1) s(\rho(\xcoord, t_1), e(\state(\xcoord, t_1)))
- \rho(\xcoord, t_1) \int_{t_1}^{t_2} \frac{1}{\theta}
\frac{\resist}{\rho} |\curl{}\current|^2 \mathrm{d}t
\end{aligned}
\end{align}
Since $\eta = -\rho s$ and $\rho(\xcoord, t_1) = \rho(\xcoord, t_2)$, then
\eqref{etaDissPointI} can be rewritten in a more compact form as:
\begin{align}
\label{etaDissPointII}
\begin{aligned}
& \eta_2 = \eta_1 - \rho_1 \int_{t_1}^{t_2} \frac{1}{\theta}
\frac{\resist}{\rho} |\curl{}\current|^2 \mathrm{d}t
\end{aligned}
\end{align}
which proves \eqref{OpTwoEta}.
\end{proof}

\begin{corollary}[Total energy balance]\label{corollaryTotalE} We note that
global balance
\eqref{OpTwoPropOne} and pointwise property \eqref{OpTwoInte} imply a global
balance of total energy. More precisely, integrating \eqref{OpTwoInte} in space
we obtain:
\begin{align}
\label{OpTwoInteInt}
\int_{\domain} (\totme - \tfrac{1}{2} \rho|\vel|^2)(\xcoord, t_2) \dx
=
\int_{\domain} (\totme - \tfrac{1}{2} \rho|\vel|^2)(\xcoord, t_1) \dx
+ \int_{\domain}  \int_{t_1}^{t_2} \resist |\curl{}\current|^2 \mathrm{d}t \dx
\end{align}
Now, adding \eqref{OpTwoInteInt} to \eqref{OpTwoPropOne} yields:
\begin{align*}
& \int_{\domain} (\totme
+ \tfrac{\mu}{2}|\Hfield|^2)(\xcoord, t_2) \dx \\
& \ \ \ + \int_{t_1}^{t_2} \int_{\partial\domain}
\Big(\resist \curl{}\Hfield - \mu (\vel \times \Hfield) + \tfrac{\mu d_i}{\rho}
\curl{}\Hfield \times
\Hfield \Big)
\cdot (\Hfield \times \normal) \ds \mathrm{d}t \\
& \ \ \ = \int_{\domain} (\totme + \tfrac{\mu}{2}|\Hfield|^2)(\xcoord, t_1) \dx
\end{align*}
which is the balance of total energy. In particular, if we consider
periodic boundary conditions, or $\Hfield \times \normal \equiv \bzero$ on the
entirety of the boundary, it becomes clear that Operator \#2 will preserve
total energy.
\end{corollary}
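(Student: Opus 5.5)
The plan is to combine the two identities already established for Operator \#2 in the preceding proposition: the global kinetic-plus-magnetic balance \eqref{OpTwoPropOne} and the pointwise internal-energy identity \eqref{OpTwoInte}. First I will integrate \eqref{OpTwoInte} over $\domain$. The left and right sides are functions of $\xcoord$ only, and the source term $\resist|\curl{}\current|^2$ is nonnegative and (for a sufficiently regular solution) integrable on $\domain\times[t_1,t_2]$. Fubini/Tonelli therefore lets me write the time–space integral in either order, which gives \eqref{OpTwoInteInt}.

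The second step is to add \eqref{OpTwoInteInt} to \eqref{OpTwoPropOne}. On the left, the internal-energy term $\totme - \tfrac12\rho|\vel|^2$ combines with $\tfrac12\rho|\vel|^2 + \tfrac{\mu}{2}|\Hfield|^2$ to give $\totme + \tfrac{\mu}{2}|\Hfield|^2$ at $t_2$, and the same happens at $t_1$ on the right. The dissipation term $-\int_{t_1}^{t_2}\!\int_\domain \resist|\curl{}\Hfield|^2$ in \eqref{OpTwoPropOne} cancels exactly against the heating term in \eqref{OpTwoInteInt}. For this cancellation I read the $\current$ in \eqref{OpTwoInte} as $\Hfield$, consistent with the source $\resist|\curl{}\Hfield|^2$ in \eqref{OperatorTwo}. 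After swapping the order of integration, the heating term is the same double integral. The boundary flux term is carried over unchanged, which yields the stated total energy balance.

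Finally, for the special cases I will observe the following. If $\Hfield\times\normal\equiv\bzero$ on $\partial\domain$, the integrand of the boundary term vanishes identically. In the periodic case, contributions from opposite faces carry opposite normals $\pm\normal$ and equal traces, so they cancel. In either case the total energy $\int_\domain \totme + \tfrac{\mu}{2}|\Hfield|^2\dx$ is constant on $[t_1,t_2]$.

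There is no real obstacle here. The only points requiring care are bookkeeping: the sign conventions, the identification of $\current$ with $\Hfield$ in the heating term, and the justification of exchanging the time and space integrals. The last of these is immediate from the nonnegativity of the integrand.
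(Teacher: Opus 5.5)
Your proposal is correct and is the paper's own argument: integrate the pointwise identity \eqref{OpTwoInte} over $\domain$ and add it to \eqref{OpTwoPropOne}, so the Joule heating cancels the resistive dissipation; the boundary term then vanishes for periodic data or when $\Hfield \times \normal = \bzero$. Your extra points are details the paper leaves implicit: exchanging the time and space integrals, and reading the heating term in \eqref{OpTwoInte} as the $\resist|\curl{}\Hfield|^2$ source of Operator \#2.
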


% \newpage

%%%%%%%%%%%%%%%%%%%%%%%%%%%%%%%%%%%%%%%%%%%%%%%%%%%%%%%%%%%%%%%%%%%%%%%%%%%%%%%%
%%%%%%%%%%%%%%%%%%%%%%%%%%%%%%%%%%%%%%%%%%%%%%%%%%%%%%%%%%%%%%%%%%%%%%%%%%%%%%%%
%%%%%%%%%%%%%%%%%%%%%%%%%%%%%%%%%%%%%%%%%%%%%%%%%%%%%%%%%%%%%%%%%%%%%%%%%%%%%%%%

% \newpage
\section{Numerical scheme}\label{Sec:NumericalScheme}

\subsection{Space discretization preliminaries}\label{sec:SpaceDisc}

In this subsection we outline the space discretization used for Euler's
components $\{\rho, \mom, \totme\}$ and the magnetic field $\Hfield$. Let
$\domain \subset \mathbb{R}^d$, with $d = 2$ or $d=3$, we consider a
simplicial mesh $\triangulation$ and a corresponding scalar-valued continuous
finite element space $\FESpaceHypComp$ for each
component of Euler's system:
\begin{align}\label{VdefSpace}
\FESpaceHypComp &=
\big \{ v_h(\xcoord) \in \mathcal{C}^{0}(\Omega) \;\big|\;
v_h (\locglobmap_\element(\widehat{\xcoord})) \in
\mathbb{P}^1(\widehat{\element}) \;\forall \element \in \triangulation \big\}.
\end{align}
Here, $\locglobmap_\element(\widehat{\xcoord}):\widehat{\element}\to\element$
denotes a diffeomorphism mapping from the unit simplex $\widehat{\element}$ to
the physical element $\element \in \triangulation$,
and $\mathbb{P}^1(\widehat{K})$ is polynomial space of at most first degree on
the reference element. We
define $\HypVertices=\big\{1:\text{dim}(\FESpaceHypComp)\big\}$ as the
index-set of global, scalar-valued degrees of freedom (DOF) corresponding to
$\FESpaceHypComp$. Similarly, we introduce the set of global shape functions
$\{\phi_{i}(\xcoord)\}_{i \in \HypVertices}$ and the set of collocation
points $\{\xcoord_{i}\}_{i \in \HypVertices}$ satisfying the property
$\phi_{i}(\xcoord_j) = \delta_{ij}$ for all $i,j \in \HypVertices$. We
assume that the partition of unity property $\sum_{i \in \HypVertices}
\phi_{i}(\xcoord) = 1$ holds true for all $\xcoord \in \Omega$. We
introduce a number of matrices that will be used for the algebraic
discretization. We define the consistent mass matrix entries $m_{ij} \in
\mathbb{R}$, lumped mass matrix $m_i \in \mathbb{R}$, and the discrete
divergence-matrix entries $\bv{c}_{ij} \in \mathbb{R}^d$:
\begin{align}
  \label{schemeMatrices}
  m_{ij} = \int_{\Omega} \HypBasisComp_i \HypBasisComp_j\dx \ , \ \
  m_i = \int_{\Omega} \HypBasisComp_i \dx \ , \ \
  \bv{c}_{ij} = \int_{\Omega} \nabla\phi_j \phi_i \dx \, .
\end{align}
Note that the definition of $m_{ij}$ and the partition of unity property
$\sum_{i \in \HypVertices} \phi_{i}(\xcoord) = 1$ imply that $\sum_{j \in
  \HypVertices} m_{ij} = m_i$. Given two scalar-valued finite element
functions $u_h = \sum_{i \in \HypVertices} u_i \HypBasisComp_i \in
\FESpaceHypComp$ and $v_h = \sum_{i \in \HypVertices} v_i
\HypBasisComp_i\in \FESpaceHypComp$ we define the lumped inner product as:
\begin{align}\label{LumpedInner}
  \langle u_h, v_h \rangle = \sum_{i \in \HypVertices} m_i u_i v_i \, .
\end{align}
Similarly, we define the space of vector-valued finite element functions
$[\FESpaceHypComp]^d$. For the case of vector-valued functions
$\boldsymbol{u}_h = \sum_{i \in
\HypVertices} \boldsymbol{u}_i \HypBasisComp_i \in
[\FESpaceHypComp]^d$ and $\boldsymbol{v}_h = \sum_{i \in \HypVertices}
\boldsymbol{v}_i \HypBasisComp_i\in [\FESpaceHypComp]^d$ the lumped
inner-product is defined as $\langle \boldsymbol{u}_h, \boldsymbol{v}_h \rangle
= \sum_{i \in \HypVertices} m_i \boldsymbol{u}_i \cdot \boldsymbol{v}_i$.

We define the curl-conforming finite-dimensional space:
\begin{align}\label{BDMspace}
\FESpaceH = \big\{ \Htest_h \in H(\text{curl}, \Omega) \, \big | \,
[\nabla_{\widehat{\xcoord}}\locglobmap_{\element}(\widehat\xcoord)]^{\transp}
\Htest_h(\locglobmap_{\element}(\widehat{\xcoord})) \in
[\mathbb{P}^1(\widehat{\element})]^d
\  \forall \element \in \triangulation \big\}
\end{align}
which will be used to discretize the magnetic field $\Hfield$. The finite
element space $\FESpaceH$ is known as the ``rotated'' or curl-conforming
$\text{BDM}_1$ space. The primary motivation to use this space is that it is the
simplest curl-conforming finite element that spans all the vector-valued
polynomial space $[\mathbb{P}_1]^d$, therefore full second-order
accuracy should be expected in the $L^p$-norm, for $1 \leq
p \leq \infty$, when using this element. We note in passing, that definition
\eqref{BDMspace} involves the covariant Piola transform, see \cite[Chapter
9]{ErnGuermond2021}.

Finally, we define the space
\begin{align}\label{PotSpace}
  \FESpacePot = \big\{ \EpotTest_h \in \mathcal{C}^0(\Omega) \, \big | \,
  \EpotTest_h(\locglobmap_{\element}(\widehat{\xcoord})) \in
  \mathbb{P}_2(\widehat{\element})
  \  \forall \element \in \triangulation \big\} \, .
\end{align}
It is easy to prove that the space $\FESpacePot$ satisfies the inclusion
$\nabla\FESpacePot \subset \FESpaceH$, more precisely, these two
spaces are part of a discrete exact sequence, see \cite{Arnold2014}. The space
$\FESpacePot$ is used to define the weak divergence-free property, see
Proposition \ref{Prop:sourceupdate}. For the specific case that we may want to
enforce tangential boundary conditions on the solution we also define the
finite element spaces
\begin{align*}
\FESpaceHtangent &= \big\{ \Htest_h \in \FESpaceH \, \big| \, \Htest_h \times
\normal = \bzero \text{ on } \partial\domain \big\} , \\
\FESpacePotZero &= \big\{ \EpotTest_h \in \FESpacePot \, \big| \, \EpotTest_h =
0 \text{ on } \partial\domain \big\} .
\end{align*}
Note again that the inclusion $\nabla \FESpacePotZero \subset \FESpaceHtangent$
holds true. Finally, we note that it is possible to develop compatible
construction of finite element spaces $\FESpaceHypComp$, $\FESpaceH$ and
$\FESpacePot$ of higher polynomial degree for both simplices as well as
quadrilateral/hexahedral elements. The reader is referred to Remark 3.1 in
\cite{Dao2024}.

% Finally we define:
% \begin{align}
% \label{LtwoProjector}
% \Pi_{\FESpaceHypComp}: [L^2(\domain)]^d \rightarrow [\FESpaceHypComp]^d
% \ \ \ \RED{\text{***Erase?***}}
% \end{align}
% the $L^2(\domain)$ projection onto the continuous finite element
% space $[\FESpaceHypComp]^d$. Note that computing the action of
% $\Pi_{\FESpaceHypComp}$ requires a mass-matrix inversion.
% $\Pi_{\FESpaceHypComp}$ and
Given an arbitrary vector-valued function $\boldsymbol{w}(\xcoord) \in
[L^2(\domain)]^d$ we define the lumped projection
$\Pi_{\FESpaceHypComp}^{\mathrm{L}}: [L^2(\domain)]^d \rightarrow
[\FESpaceHypComp]^d$ as:
\begin{align}
\label{LumpedL2}
\Pi_{\FESpaceHypComp}^{\mathrm{L}} [\boldsymbol{w}(\xcoord)]
:=
\sum_{i \in \HypVertices} \boldsymbol{w}_i \HypBasisComp_i(\xcoord)
\ \ \text{where} \ \
\boldsymbol{w}_i := \frac{1}{m_i} \int_{\domain} \boldsymbol{w}(\xcoord)
\HypBasisComp_i(\xcoord) \dx
\end{align}
We will use the operator $\Pi_{\FESpaceHypComp}^{\mathrm{L}}$ in Section
\ref{Sec:art_resist} to define an artificial resistivity.

\subsection{Source-system scheme}\label{Sec:NumericalSource}

The first step is elucidating a proper variational formulation for Operator \#2.
We start by multiplying \eqref{OperatorTwoVel} and \eqref{OperatorTwoH} by
smooth vector-valued test functions, $\veltest$ and $\Htest$ respectively,
and integrate by parts to obtain:
\begin{align}
\label{preWeakForm}
\begin{aligned}
 &\int_{\domain} \rho \partial_t \vel \cdot \veltest  \dx
- \mu \int_{\domain}  (\curl{}\Hfield \times \Hfield)\cdot \veltest \dx
= 0 \\
&\int_{\domain} \mu \partial_t \Hfield \cdot \Htest
+ \mu (\curl{}\Htest \times \Hfield) \cdot \vel
+ \int_{\domain} \resist \curl{}\Hfield \cdot \curl{}\Htest
+ \tfrac{\mu d_i}{\rho} (\curl{}\Hfield \times \Hfield) \cdot \curl{}\Htest
+ \dx \\
&\ \ \ + \int_{\partial\domain}
\big[- \mu (\vel \times \Hfield) + \resist\curl{}\Hfield + \tfrac{\mu
d_i}{\rho} (\curl{}\Hfield \times
\Hfield) \big] \cdot (\Htest \times \normal) \ds
= 0 \, .
\end{aligned}
\end{align}
Therefore, for the specific case when we use boundary conditions $\Hfield \times
\normal \equiv 0$ on $\partial\domain$ we propose the fully discrete scheme:
find $\{\vel_h^{n+1}, \Hfield_h^{n+1}\} \in \mathbb{V}_h^3 \times
\FESpaceHtangent$ such that
\begin{align}
\label{SourceSchemeVar}
\left\{
\begin{aligned}
&\langle \rho_h^n (\vel_h^{n+1} - \vel_h^n), \veltest_h  \rangle
- \dt_n \mu ( (\curl{}\Hfield_h^{n+\frac{1}{2}} \times
\Hfield_h^{n+\frac{1}{2}})
, \veltest_h )_{\Ltwo}
= \bzero \\
&\mu (\Hfield_h^{n+1} - \Hfield_h^n , \Htest_h)_{\Ltwo}
+ \dt_n \mu ((\curl{}\Htest_h \times \Hfield_h^{n+\frac{1}{2}}) ,
\vel_h^{n+\frac{1}{2}} )_{\Ltwo} \\
& \ \ \ + \dt_n ( \resist_h^n \curl{}\Hfield_h^{n+\frac{1}{2}} ,
\curl{}\Htest_h)_{\Ltwo}
\\
&\ \ \ + \dt_n \mu d_i \big(\tfrac{1}{\rho_h^n}
(\curl{}\Hfield_h^{n+\frac{1}{2}}
\times \Hfield_h^{n+\frac{1}{2}}), \curl {}\Htest_h \big)_{\Ltwo}
= \bzero
\end{aligned}
\right.
\end{align}
%
% \todo[inline]{do you have to put $L^2$ in the inner product?}
% Not really, because indeed the L^2 inner product might even be defined.
% But at this point in time this is just cosmetic.
%
for all $\{\veltest_h, \Htest_h\} \in \mathbb{V}_h^3 \times
\FESpaceHtangent$, where $\vel_h^{n+\frac{1}{2}} := \frac{1}{2} (\vel_h^{n}
+ \vel_h^{n+1})$, $\Hfield_h^{n+\frac{1}{2}} := \frac{1}{2} (\Hfield_h^{n} +
\Hfield_h^{n+1} )$, and $\resist_h^n \geq r$ is the finite element function
associated to the resistivity. For the purposes of this section, the reader
only needs to know $\resist_h^n(\xcoord)$ is purely explicit, meaning:
$\resist_h^n(\xcoord)$ will be a function of solution fields from the previous
time steps. More precisely, it will depend on $\{\rho_h^{n}, \vel_h^{n},
\Hfield^{n}\}$, $\{\rho_h^{n-1}, \vel_h^{n-1}, \Hfield^{n-1}\}$, and the
time-step size $\dt_{n-1}$, see Sections \ref{Sec:ArtResMotiv} and
\ref{Sec:art_resist} for more details. Note that unknown field in
\eqref{SourceSchemeVar} is $\vel_h^{n+1}(\xcoord) = \sum_{i \in
\HypVertices} \HypBasisComp_i(\xcoord) \vel_i^{n+1}$. Once we solve for
$\vel_h^{n+1}(\xcoord)$ we have to define the function:
\begin{align}
\label{SourceSchemeChangeVar}
\mom_h^{n+1}(\xcoord)
:=
\sum_{i \in \HypVertices} \HypBasisComp_i(\xcoord) \mom_i^{n+1}
\ \ \text{with} \ \
\mom_i^{n+1} := \rho_i^n \vel_i^{n+1}
\end{align}
Finally, we compute the total mechanical energy at each node as:
\begin{align}
\label{SourceSchemeTotme}
\totme_i^{n+1}  =
\totme_i^{n}
+ \frac{1}{2} \frac{|\mom_i^{n+1}|^2}{\rho_i^n}
- \frac{1}{2} \frac{|\mom_i^{n}|^2}{\rho_i^n}
+ \dt_n J_i^{n+\frac{1}{2}}
\ \ \text{for all }i \in \HypVertices ,
\end{align}
where
\begin{align}
\label{AvgJouleHeat}
J_i^{n+\frac{1}{2}}
:=
\frac{1}{m_i}
\int_{\domain} \resist_h^n |\curl \Hfield_h^{n+\frac{1}{2}}|^2
\phi_i \dx .
\end{align}
Note that, in essence, \eqref{SourceSchemeTotme} is a discrete interpretation
of \eqref{OpTwoInte}, while $J_i^{n+\frac{1}{2}}$ is just a
weighted-average or lumped-projection of the Joule heat-power. The coupled
nonlinear system \eqref{SourceSchemeVar} is meant to be solved first, once
$\{\mom_h^{n+1},
\Hfield_h^{n+1} \}$ are computed they can be plugged into
\eqref{SourceSchemeTotme} and \eqref{AvgJouleHeat} to compute
$\totme_i^{n+1}$ at each node $i \in \HypVertices$. See Section
\ref{sec:all_the_scheme} for the actual algorithmic summary. We now state the
properties satisfied by the scheme described by \eqref{SourceSchemeVar},
\eqref{SourceSchemeChangeVar}, \eqref{SourceSchemeTotme}, and
\eqref{AvgJouleHeat}.

%%%%%%%%%%%%%%%%%%%%%%%%%%%%%%%%%%%%%%%%%%%%%%%%%%%%%%%%%%%%%%%%%%%%%%%%%%%%%%%%
%%%%%%%%%%%%%%%%%%%%%%%%%%%%%%%%%%%%%%%%%%%%%%%%%%%%%%%%%%%%%%%%%%%%%%%%%%%%%%%%
%%%%%%%%%%%%%%%%%%%%%%%%%%%%%%%%%%%%%%%%%%%%%%%%%%%%%%%%%%%%%%%%%%%%%%%%%%%%%%%%

\begin{proposition}[Properties preserved by the source-update scheme]
\label{Prop:sourceupdate} Assume that the initial data is such that
\begin{align}
\label{OpTwoSourcePointAss}
\rho_i^n > 0 \  , \ \
e_i^n := \tfrac{\totme_i^n}{\rho_i^n} - \tfrac{1}{2} |\vel_i^n|^2 > 0
\ \ \text{and} \ \
\theta_i^n := [\tfrac{\partial}{\partial e} s(\tfrac{1}{\rho_i^n}, e_i)]^{-1} >
0 \ , \ \
\end{align}

The scheme \eqref{SourceSchemeVar}-\eqref{AvgJouleHeat} satisfies the energy
estimate: \\
\begin{align}
\label{EnergyKineticMagnetic}
\begin{gathered}
\sum_{i \in \HypVertices} m_i  \big(\tfrac{1}{2}\rho_i^n|\vel_i^{n+1}|^2\big)
+ \tfrac{\mu}{2} \|\Hfield_h^{n+1}\|_{\Ltwo}^2
+ \dt \|\sqrt{\resist_h^n} \curl{}\Hfield_h^{n+\frac{1}{2}} \|_{\Ltwo}^2 \\
=
\sum_{i \in \HypVertices} m_i \big(\tfrac{1}{2} \rho_i^n|\vel_i^{n}|^2\big)
+ \tfrac{\mu}{2} \|\Hfield_h^n\|_{\Ltwo}^2
\end{gathered}
\end{align}
which combined with \eqref{SourceSchemeChangeVar}, \eqref{SourceSchemeTotme},
and \eqref{AvgJouleHeat} implies that the total conservation of
energy:
\begin{align}
\label{SourceTotalEnergy}
\begin{gathered}
\sum_{i \in \HypVertices} m_i \totme_i^{n+1}
+ \tfrac{\mu}{2} \|\Hfield_h^{n+1}\|_{\Ltwo}^2
=
\sum_{i \in \HypVertices} m_i \totme_i^{n}
+ \tfrac{\mu}{2} \|\Hfield_h^n\|_{\Ltwo}^2 .
\end{gathered}
\end{align}
We have that the following nodewise/pointwise properties hold true:
\begin{align}
\label{SourcePointwiseProp}
\begin{gathered}
\rho_i^{n+1} = \rho_i^n \ , \ \
e(\state_i^{n+1}) \geq e(\state_i^{n}) \ , \ \
\theta(\state_i^{n+1}) \geq \theta(\state_i^{n}) \ , \\
s(\state_i^{n+1}) \geq s(\state_i^{n}) \ , \ \
\eta(\state_i^{n+1}) \leq \eta(\state_i^{n}).
\end{gathered}
\end{align}
Finally, we also have the preservation of the following involution property:
\begin{align}
\label{involutionProp}
(\Hfield_h^{n+1}, \nabla \EpotTest_h)_{\Ltwo} = (\Hfield_h^{n}, \nabla
\EpotTest_h)_{\Ltwo} \ \ \text{for all }\EpotTest_h \in\FESpacePotZero .
\end{align}
\end{proposition}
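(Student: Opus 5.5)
The proof mirrors, at the discrete level, the proof of the properties of Operator \#2, with specific test functions playing the role of the multiplications by $\vel$ and $\mu\Hfield$. For the energy estimate \eqref{EnergyKineticMagnetic}, I test the first equation of \eqref{SourceSchemeVar} with $\veltest_h = \vel_h^{n+\frac12}$. Since the lumped inner product is nodewise, this gives
\[
\sum_i m_i \rho_i^n (\vel_i^{n+1}-\vel_i^n)\cdot\tfrac12(\vel_i^{n+1}+\vel_i^n) = \sum_i m_i \tfrac12\rho_i^n\big(|\vel_i^{n+1}|^2-|\vel_i^n|^2\big).
\]
The Lorentz term becomes $-\dt_n\mu\big((\curl{}\Hfield_h^{n+\frac12}\times\Hfield_h^{n+\frac12}),\vel_h^{n+\frac12}\big)$.

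Next I test the second equation with $\Htest_h=\Hfield_h^{n+\frac12}\in\FESpaceHtangent$. The mass term gives $\tfrac{\mu}{2}\big(\|\Hfield_h^{n+1}\|^2-\|\Hfield_h^n\|^2\big)$. The resistive term gives $\dt_n\|\sqrt{\resist_h^n}\curl{}\Hfield_h^{n+\frac12}\|^2$. The Hall term vanishes pointwise, because $(\curl{}\Hfield\times\Hfield)\cdot\curl{}\Hfield=0$. The transport term equals $\dt_n\mu\big((\curl{}\Hfield_h^{n+\frac12}\times\Hfield_h^{n+\frac12}),\vel_h^{n+\frac12}\big)$, which exactly cancels the Lorentz term when the two equations are added. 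The key point is that both terms use the same midpoint quantities, so Crank--Nicolson makes the cancellation exact.

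For total energy \eqref{SourceTotalEnergy}, I multiply \eqref{SourceSchemeTotme} by $m_i$ and sum over $i$. Using $\mom_i^{n+1}=\rho_i^n\vel_i^{n+1}$, the kinetic differences are $\tfrac12\rho_i^n(|\vel_i^{n+1}|^2-|\vel_i^n|^2)$. The partition of unity gives
\[
\sum_i m_i J_i^{n+\frac12}=\int_\domain \resist_h^n|\curl{}\Hfield_h^{n+\frac12}|^2\dx .
\]
Substituting \eqref{EnergyKineticMagnetic} then yields \eqref{SourceTotalEnergy}.

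For the pointwise properties \eqref{SourcePointwiseProp}, $\rho_i^{n+1}=\rho_i^n$ holds by construction. From \eqref{SourceSchemeTotme} I get
\[
e_i^{n+1}-e_i^n = \tfrac{\dt_n}{\rho_i^n} J_i^{n+\frac12}\ge 0,
\]
since $\resist_h^n\ge r\ge0$ and $\phi_i\ge 0$ for $\mathbb{P}^1$ Lagrange functions. With $\rho$ fixed, the remaining properties follow as in \eqref{PositThetaProof}--\eqref{etaDissPointII}. By \eqref{tempMono}, $\theta$ is nondecreasing in $e$, so $\theta_i^{n+1}\ge\theta_i^n>0$. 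Because $\partial s/\partial e = 1/\theta>0$, $s$ is nondecreasing in $e$. Finally $\eta=-\rho s$ with $\rho$ unchanged gives $\eta_i^{n+1}\le\eta_i^n$. Here I would write the increments as integrals over the segment $[e_i^n,e_i^{n+1}]$ rather than over time, and use the positivity of $\theta$ along that segment, which follows from monotonicity.

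For the involution \eqref{involutionProp}, I take $\Htest_h=\nabla\EpotTest_h$ with $\EpotTest_h\in\FESpacePotZero$, which is admissible because $\nabla\FESpacePotZero\subset\FESpaceHtangent$. Then $\curl{}\Htest_h=0$, so every $\dt_n$ term vanishes and only $\mu(\Hfield_h^{n+1}-\Hfield_h^n,\nabla\EpotTest_h)=0$ remains.

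The main point needing care is the pointwise step. It requires nonnegativity of $J_i^{n+\frac12}$, which rests on $\phi_i\ge0$ and $\resist_h^n\ge0$. It also requires passing the continuous-time monotonicity arguments to a discrete jump in $e$; the segment-integral argument above handles this. Existence of a solution of the nonlinear system is assumed, not proved here.
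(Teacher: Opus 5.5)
Your proposal is correct and follows the paper's proof essentially step for step. The shared steps are: midpoint test functions giving the exact Lorentz/transport cancellation, with the Hall term vanishing pointwise; the partition of unity for $\sum_i m_i J_i^{n+\frac12}$; nodewise growth of $e$ from $J_i^{n+\frac12}\geq 0$, followed by the thermodynamic monotonicity arguments; and the choice $\Htest_h=\nabla\EpotTest_h$ for the involution. Your integration over $[e_i^n,e_i^{n+1}]$ is the natural discrete replacement for the paper's time integrals. One correction: positivity of $\theta$ along that segment comes from the standing assumption \eqref{posTempAssump}, not from the monotonicity \eqref{tempMono}, which by itself cannot prevent $\partial s/\partial e$ from vanishing.
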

\begin{proof} The proof of \eqref{EnergyKineticMagnetic} follows by taking
$\veltest_h = \vel_h^{n+\frac{1}{2}}$ and $\Htest_h =
\Hfield_h^{n+\frac{1}{2}}$ in \eqref{SourceSchemeVar} and adding both
equations:
\begin{align*}
% \begin{gathered}
&\sum_{i \in \HypVertices} \tfrac{1}{2}m_i |\vel_i^{n+1}|^2
+ \tfrac{\mu}{2}\|\Hfield_h^{n+1}\|_{L^2(\domain)}^2 \\
& \ \ \ \ \ \ \ =
\sum_{i \in \HypVertices} \tfrac{1}{2}m_i |\vel_i^{n}|^2
+ \tfrac{\mu}{2} \|\Hfield_h^{n}\|_{L^2(\domain)}^2
- \dt_n \| \sqrt{\resist_h^n}
\curl{}\Hfield_h^{n+\frac{1}{2}}\|_{L^2(\domain)}^2.
% \end{gathered}
\end{align*}
Now, multiplying \eqref{SourceSchemeTotme} by $m_i$ and adding for
all $i$ in $\HypVertices$ we obtain:
\begin{align}
\label{InternalEnergyUpdate}
\sum_{i \in \HypVertices} m_i \Big( \totme_i^{n+1}
- \frac{1}{2} \frac{|\mom_i^{n+1}|^2}{\rho_i^n} \Big)
=
\sum_{i \in \HypVertices} m_i \Big( \totme_i^{n}
- \frac{1}{2} \frac{|\mom_i^{n}|^2}{\rho_i^n} \Big)
+ \dt_n m_i J_i^{n+\frac{1}{2}}.
\end{align}
Adding \eqref{EnergyKineticMagnetic} to \eqref{InternalEnergyUpdate}, then
\eqref{SourceTotalEnergy} follows as a consequence of
definition \eqref{AvgJouleHeat}. More precisely, definition
\eqref{AvgJouleHeat} implies that:
\begin{align*}
\|\sqrt{\resist_h^n} \curl{}\Hfield_h^{n+\frac{1}{2}} \|_{\Ltwo}^2
=
\sum_{i \in \HypVertices} m_i J_i^{n+\frac{1}{2}},
\end{align*}
which follows as a consequence of the partition of unity property $\sum_{i
\in \HypVertices} \phi_{i}(\xcoord) = 1$.

Now, since $\partial_t\rho$ in the context of Operator \#2, then $\rho_i^{n+1} =
\rho_i^n$ as detailed in \eqref{SourcePointwiseProp}. Now, reorganizing
\eqref{SourceSchemeTotme} we obtain:
% \label{SourceSchemeTotme}
\begin{align*}
\underbrace{\totme_i^{n+1} - \frac{1}{2} \frac{|\mom_i^{n+1}|^2}{\rho_i^n}}_{
= \, \rho_i^{n+1} \specinte_i^{n+1}} =
\underbrace{\totme_i^{n} - \frac{1}{2} \frac{|\mom_i^{n}|^2}{\rho_i^n}}_{
= \, \rho_i^{n} \specinte_i^n}
+ \dt_n J_i^{n+\frac{1}{2}}
\ \ \text{for all }i \in \HypVertices ,
\end{align*}
Therefore, we divide the previous expression by $\rho_i^{n+1} =
\rho_i^n$ which yields:
\begin{align*}
\specinte_i^{n+1} = \specinte_i^{n}
+ \dt_n \frac{J_i^{n+\frac{1}{2}}}{\rho^n} \, .
\end{align*}
Since $J_i^{n+\frac{1}{2}} \geq 0$ by construction, see expression
\eqref{AvgJouleHeat}, and $\rho^n > 0$ by assumption
\eqref{OpTwoSourcePointAss}, we conclude that $\specinte_i^{n+1} \geq
\specinte_i^{n}$. Regarding inequalities $\theta(\state_i^{n+1})
\geq \theta(\state_i^{n})$,
$ s(\state_i^{n+1}) \geq s(\state_i^{n})$ and $\eta(\state_i^{n+1}) \leq
\eta(\state_i^{n})$ in \eqref{SourcePointwiseProp}: they follow by similar
arguments to those outlined in \eqref{PositThetaProof}, \eqref{sminProof},
\eqref{etaDissPointI} and \eqref{etaDissPointII}.

Finally, involution property \eqref{involutionProp} follows by taking
$\Htest_h := \nabla \EpotTest_h$,  with $\EpotTest_h \in\FESpacePotZero$, in
the induction equation in \eqref{SourceSchemeVar} and noting that $\curl{}\nabla
\EpotTest_h \equiv 0$.
\end{proof}

\begin{remark}[Three dimensional nature of the model] We note that
Operator \#2 is intrinsically three-dimensional: the Hall term leads to
non-zero time derivative of the third component of the magnetic field
$\Hfield$, which in turn leads to a 3-dimensional Lorentz force, and ultimately
a three-dimensional momentum.

This poses the question of how to do computations without actually committing
to a fully three-dimensional code. This issue has been considered in the
existing literature in what is usually called the 2.5-space dimensions
formulation \cite{montgomery1982, laakmann2023}. The
$2.5$-dimensional implementation assumes zero
gradients in the $z$-direction and enlarges the magnetic field with a third
component that does not need to be discretized with specialized finite element
spaces. Say, for instance, the third component could be purely nodal if we
wish. The details of the $2.5$-dimensional implementation are presented in
\ref{app:two_and_half}.
% \todo[inline]{why "what is not called"? }
\end{remark}

%%%%%%%%%%%%%%%%%%%%%%%%%%%%%%%%%%%%%%%%%%%%%%%%%%%%%%%%%%%%%%%%%%%%%%%%%%%%%%%%
%%%%%%%%%%%%%%%%%%%%%%%%%%%%%%%%%%%%%%%%%%%%%%%%%%%%%%%%%%%%%%%%%%%%%%%%%%%%%%%%
%%%%%%%%%%%%%%%%%%%%%%%%%%%%%%%%%%%%%%%%%%%%%%%%%%%%%%%%%%%%%%%%%%%%%%%%%%%%%%%%

\subsection{Hyperbolic solver: assumptions}\label{sec:hypsolver_ass}

Following the same structure of our previous work \cite{Dao2024}, we place no
particular emphasis on the actual choice of hyperbolic solver used to solve
Operator \#1. The central ideas advanced in this paper are compatible with most
of the existing numerical methods used to solve Euler's equation of gas
dynamics. With that being said, this Section is almost identical to Section 3.2
of \cite{Dao2024} and it is only provided for the sake of completeness.

Given some initial data $\state_h = [\rho_h^n, \mom_h^n, \totme_h^n]^\transp$,
a numerical approximation to the solutions of $\state(\xcoord, t) =
[\rho(\xcoord, t), \mom(\xcoord, t), \totme(\xcoord, t)]^\transp$ at time $t_n$,
we have at hand a numerical procedure to compute the updated state as
\begin{align}\label{EulerAbstractScheme}
\{\rho_h^{n+1}, \mom_h^{n+1}, \totme_h^{n+1}, \dt_n\}
&:=
\texttt{euler\_system\_update}(\{\rho_h^n, \mom_h^{n}, \totme_h^n\}),
\end{align}
where $\{\rho_h^{n+1}, \mom_h^{n+1}, \totme_h^{n+1}\}$ is
the approximate solution at time $t_n + \dt_n$. Note that as described
in \eqref{EulerAbstractScheme}, $\dt_n$ is a return argument of the procedure
$\texttt{euler\_system\_update}$. In other words,
$\texttt{euler\_system\_update}$ determines the time-step size on its own. We
may at times, need to prescribe the time-step size used by
$\texttt{euler\_system\_update}$, in such case the interface of the method
might look as:
\begin{align*}
\{\rho_h^{n+1}, \mom_h^{n+1}, \totme_h^{n+1}\}
&:=
\texttt{euler\_system\_update}(\{\rho_h^n, \mom_h^{n}, \totme_h^n, \dt_n\}),
\end{align*}
where $\dt_n$ is \emph{supplied} to $\texttt{euler\_system\_update}$.

Regarding more specific properties of the
method $\texttt{euler\_system\_update}$ we may assume it is formally
second-order accurate, and most importantly, the following structural
properties hold:
\begin{itemize}
\item[\itemizebullet] \textit{Collocated/nodal discretization.} We assume that
all the components of Euler's system \eqref{OperatorOne} are discretized in a
collocated fashion meaning
  \begin{align*}
    \rho_h(\xcoord) = \sum_{i \in \HypVertices} \rho_i
    \phi_i(\xcoord) \, , \
    \mom_h(\xcoord) = \sum_{i \in \HypVertices} \mom_i
    \phi_i(\xcoord) \, , \
    \totme_h(\xcoord) = \sum_{i \in \HypVertices} \totme_i
    \phi_i(\xcoord) \, , \
  \end{align*}
  where $\rho_i \in \mathbb{R}$, $\mom_i \in \mathbb{R}^3$, $\totme_i \in
  \mathbb{R}$, and $\{\phi_i(\xcoord)\}_{i \in \HypVertices}$ is the basis
  of the scalar-valued finite element space $\FESpaceHypComp$ defined in
  \eqref{VdefSpace}.

%   This condition rules out the use of modal basis such as
%   the Bernstein or the monomial basis.

  The use of a nodal basis plays a role in the energy-stability analysis,
  see Proof \ref{Prop:sourceupdate}. More precisely, our proof recovers an
  estimate for the kinetic energy as a discrete sum on the nodes: see for
  instance update formula \eqref{SourceSchemeTotme}, energy estimate
  \eqref{EnergyKineticMagnetic}, and Corollary \ref{corollaryTotalE}. In
  essence the proof works because the degrees of freedom are associated to
  full-fledged states $\state_i = [\rho_i,  \mom_i, \totme_i]$. It is not
  clear, at this point in time, that the same proof will work in the case of,
  for instance, a monomial basis.

%   \todo[inline]{MN: Why collocated discrerization is required? In my opinion it
%   does not matter how one compute the Euler part. Please elaborate if it is
% otherwise.}
%
%   \todo[inline]{IT: It matters for the kinetic energy: you need to recover the
% kinetic energy as a discrete sum. For that you need that the degrees of freedom
% describe full states $\state_i = [\rho_i, \mom_i, \totme_i]$. If you use a
% modal basis, say for instance the monomial basis that Chi-Wang-Shu students
% love: you can't recover the kinetic energy as a discrete sum.}

\item[\itemizebullet] \textit{Conservation of linear invariants.} In the context
of periodic boundary conditions the hyperbolic solver preserves the linear
invariants:
\begin{align}\label{consMasss}
\begin{gathered}
  \sum_{i \in \HypVertices} m_i \rho_i^{n+1} = \sum_{i \in \HypVertices}
  m_i \rho_i^{n}
  \ , \ \
  \sum_{i \in \HypVertices} m_i \mom_i^{n+1} = \sum_{i \in \HypVertices}
  m_i \mom_i^{n}
  \ , \\
  \sum_{i \in \HypVertices} m_i \totme_i^{n+1} = \sum_{i \in \HypVertices}
  m_i
  \totme_i^{n},
\end{gathered}
\end{align}
where $m_i$ was defined in \eqref{schemeMatrices}.
\item[\itemizebullet] \textit{Admissibility.} We assume that if the initial
data $\state_i^n = [\rho_i^n, \mom_i^n, \totme_i^n]^\transp$ is admissible,
meaning $\state_i^n \in \mathcal{A}$ for all $i \in \HypVertices$, where the set
$\mathcal{A}$ is defined as
\begin{align}
\label{AdmissSet}
\mathcal{A} = \big\{ \ [\rho, \mom, \totme]^\transp \in \mathbb{R}^{d+2} \ | \
\rho > 0
\text{ and } \totme - \tfrac{1}{2\rho} |\mom|^2 > 0 \ \big\} \, ,
\end{align}
then the updated state $\state_i^{n+1} = [\rho_i^{n+1}, \mom_i^{n+1},
\totme_i^{n+1}]^\transp$, as defined in \eqref{EulerAbstractScheme}, is
admissible for all $i \in \HypVertices$ as well.

We highlight that, in this paper, we use \eqref{AdmissSet} as a minimal
requirement of pointwise stability for a scheme solving Euler's equation.
Beyond the simplistic case of the ideal gas law, admissibility, as described in
\eqref{AdmissSet}, is necessary but not sufficient to guarantee hyperbolicity
\cite{Meni1989, Meni2007}. For instance, the Nobel-Abel-Stiffened-Gas Equation
of State, provided as an example in \ref{app:thermo}, requires the condition
$\specinte > q + \rho^{-1} p_{\infty} (1 - \rho b) > 0$ to guarantee
hyperbolicity. A much more stringent request than \eqref{AdmissSet} would
involve preservation of hyperbolicity, positivity of temperature and minimum
principle of the specific entropy \cite{Tadmor1986, Guermond2014}. However, we
want to avoid delving into technical aspects related to equations of state
which are not central to the main ideas advanced in this paper.

\item[\itemizebullet] \textit{Entropy dissipation inequality. }We \emph{may}
  assume that the scheme preserves a global entropy inequality, meaning
  \begin{align}\label{EulerSolverDissipation}
    \sum_{i \in \HypVertices} m_i \eta(\state_i^{n+1}) \leq \sum_{i \in
    \HypVertices} m_i
    \eta(\state_i^{n}),
  \end{align}
  in the context of periodic boundary conditions.
\end{itemize}

The hyperbolic solver used for all our computations is described in Appendix B
of our previous work \cite{Dao2024}.

\subsection{Motivations for the artificial resistivity: well-posedness of the
Newton iteration.}\label{Sec:ArtResMotiv}

In this section we provide some background and heuristics that guided the
design of the artificial viscosity advanced in Section \ref{Sec:art_resist}.
We have several motivations to introduce artificial resistivity into
the scheme. But we are primarily motivated by solvability issues found in our
early computational experiments:
\begin{itemize}
\item[1. ] \textit{Nonlinear solver robustness across several types of meshes.}
We have observed (computationally) that the scheme delivers satisfactory
results, with great performance of the Newton scheme (at most 4 Newton
iterations), when using \emph{no artificial resistivity} on uniform meshes.
However, such performance did not carry over to other types of meshes.
In the context of non-uniform meshes we have that: either Newton's method
required either too many iterations, or the Jacobian turned out to be extremely
ill-conditioned, or very small time-step sizes were required when using
\emph{no} artificial resistivity.

To the best of our knowledge, all the literature on Resistive-Hall-MHD is
limited to cartesian uniform meshes. However, our goal is to raise this standard
and having a scheme that delivers comparable performance across uniform
meshes, isotropic non-uniform meshes, anisotropic meshes, and so-called
criss-cross meshes\footnote{See for instance \cite{Boffi2010} for more
information on criss-cross meshes and their pathological behaviour.}.

\item[2. ] \textit{Invertibility of the Jacobian \eqref{NewtonIteration}}.
It is possible to add enough resistivity to \emph{guarantee} the invertibility
of the Jacobian, see for instance the coercivity
estimate \eqref{CoercivityEstimateAlt}.

\item[3. ] \textit{Taming the behaviour of advective instabilities.} The
induction equation is mostly an advective-like PDE that \emph{may} benefit from
some artificial viscosity.
\end{itemize}

We expand with some technical details in relation to bullets 1, 2 and 3
listed above.
The source system \eqref{SourceSchemeVar} is solved using Newton's method.
We derived and analyzed the coercivity of the Jacobian in
\ref{app:jacobian}. In particular, the reader might want to take a look at the
variational problem that has to be solved at each Newton iteration
\eqref{NewtonIteration}, the bilinear form associated to the Jacobian
\eqref{JacobianBilinear}, Proposition \ref{PropJacobian} states
sufficient conditions to guarantee invertibility of the Jacobian, and Remark
\ref{RemarkAltEstimate}. In order to save some time to the reader, we summarize
the findings:
\begin{itemize}
\item[(\textit{i})] Estimate \eqref{CoercivityEstimateTstep} reveals that
invertibility of the Jacobian can always be recovered by using sufficiently
small time-step size $\dt$ (\emph{without} adding any artificial viscosity).
\item[(\textit{ii})] On the other hand, estimate \eqref{CoercivityEstimateAlt}
shows that a proper combination of moderately small time-step size and
artificial resistivity can yield invertibility of the Jacobian.
\end{itemize}
These two bullets hint at two strategies at our disposal to recover
well-posedness of the Newton iteration: (\textit{i}) sheer brute force (i.e.
using tiny time-step sizes), or (\textit{ii}) a combination of both moderate
time-step sizes combined with some form of artificial viscosity. We will pursue
an approach aligned with strategy (\textit{ii}). But we will not use a
viscosity strong enough to \emph{unconditionally} guarantee invertibility of the
Jacobian. Such approach would necessarily lead to an over-diffusive first-order
scheme. Our goal is more modest: we want to add a mild (but well-informed)
regularization of the residual to improve the differentiability of the
residual and invertibility of the Jacobian.

From estimates \eqref{CoercivityEstimateTstep} and
\eqref{CoercivityEstimateAlt}, in particular
estimate \eqref{CoercivityEstimateTstep}, we gather that Jacobian invertibility
might be lost if the pointwise value of one of the following:
\begin{align}
\label{InvertVelocities}
&\ \ \ \ \ \ |\curl{}\Hfield_h(\xcoord)| \,  \text{ or }
|\vel_e| \\
\label{elec_velocity}
&\text{with} \ \vel_e := \vel - \tfrac{d_i}{\rho} \curl{}\Hfield_h(\xcoord)
\end{align}
is too large. We note in passing that $\vel_e$, as defined in
\eqref{elec_velocity}, is the electron velocity, see for instance
\cite{Goed2004}. On the other hand, the induction equation \eqref{HeqnPDE} may
be rewritten/reorganized as:
\begin{align}
\nonumber
&\partial_t \Hfield
- \curl{} \big(\vel_e \times \Hfield\big)
=
- \curl{}(\tfrac{\resist}{\mu} \curl{}\Hfield)  .
\end{align}
Therefore, we may interpret $\vel_e$ as the advective velocity of the magnetic
field.

From our previous work \cite{Dao2024}, we know that
$|\curl{}\Hfield_h(\xcoord)|$ shows up in the coercivity analysis of the
Jacobian of ideal MHD. However, in practice, the magnitude
of $|\curl{}\Hfield_h(\xcoord)|$ appears to be of no concern: we have yet to
find a test case where the Jacobian of ideal MHD turns out to be singular.
Therefore, our best guess is that, in most cases of practical interest, the
electron speed $\vel_e$ is the most likely culprit for the loss of coercivity of
the Jacobian, while coercivity-loss risk associated to
$|\curl{}\Hfield_h(\xcoord)|$ is not that important. It is also important to
remember that $\vel_e$ is large near vacuum conditions. Therefore, in the next
section we devise a first-order scheme using $|\vel_e|$ as an estimate of the
maximum wavespeed of propagation.

%%%%%%%%%%%%%%%%%%%%%%%%%%%%%%%%%%%%%%%%%%%%%%%%%%%%%%%%%%%%%%%%%%%%%%%%%%%%%%%%
%%%%%%%%%%%%%%%%%%%%%%%%%%%%%%%%%%%%%%%%%%%%%%%%%%%%%%%%%%%%%%%%%%%%%%%%%%%%%%%%
%%%%%%%%%%%%%%%%%%%%%%%%%%%%%%%%%%%%%%%%%%%%%%%%%%%%%%%%%%%%%%%%%%%%%%%%%%%%%%%%

\subsection{Artificial resistivity}\label{Sec:art_resist}

In this section we define two artificial resistivities:
\begin{itemize}
\item[-] A low-order artificial resistivity that yields very robust but
sometimes over-diffused results.
\item[-] A high-order residual-based resistivity that gets triggered in
non-smooth regions of the domain, but is, otherwise very small.
\end{itemize}
In practice these two resistivities are blended in order to localize the resistive
effects only where it is necessary. More precisely, we define the resistivity
finite element function $\resist_h^n(\xcoord)$ used in scheme
\eqref{SourceSchemeVar} as:
\begin{align*}
\resist_h^n(\xcoord) = \sum_{i \in \HypVertices}  r_i \HypBasisComp_i(\xcoord)
\in
\FESpaceHypComp \ \ \text{where} \ \
r_i = \max\{r, \min\{r_i^{\text{low}}, r_i^{\text{res}}\}\} \, ,
\end{align*}
here $r > 0$ is the value of the physical resistivity, $r_i^{\text{low}}$ the
value of a first-order artificial resistivity, and $r_i^{\text{res}}$ is a
high-order residual-based resistivity. We now explain how $r_i^{\text{low}}$
and $r_i^{\text{res}}$ are computed. The low-order resistivity is defined as
\begin{align*}
r_i^{\text{low}} = c_{\text{low}} h_i \lambda_i^{\text{low}},
\end{align*}
where $c_{\text{low}}$ is an empirical non-dimensional constant of
$\mathcal{O}(1)$, $h_i$ is a measure of the grid-size in the vicinity of the
node $\xcoord_i$, and $\lambda_i^{\text{low}}$ is an approximation to
the local
maximum speed of propagation. Here $c_{\text{low}}$ and $h_i$ can be chosen
quite loosely. Our computations use the simple choices:
\begin{align}
\label{FirstOrder00}
c_{\text{low}} = 0.25 \ \ \text{and} \ \ h_i = m_i^{1/d},
\end{align}
where $m_i = \int_{\domain} \HypBasisComp_i \dx$ is the $i$-th lumped mass
entry. While the speed of propagation $\lambda_i^{\text{low}}$ is defined as
as:
\begin{align*}
& \ \ \ \ \ \ \ \ \lambda_i^{\text{low}} = |\vel_h^e(\xcoord_i)| \ \
\text{for each }i \in \HypVertices \\
&\text{where } \vel_h^e(\xcoord_i) :=  \vel_h(\xcoord_i) -
\tfrac{d_i}{\rho}
\Pi_{\FESpaceHypComp}^{\mathrm{L}}[\curl{}\Hfield_h](\xcoord_i),
\end{align*}
which is an approximation to the electron velocity as defined in formula
\eqref{elec_velocity}. Note that the lumped $L^2$-projector
$\Pi_{\FESpaceHypComp}^{\mathrm{L}}$ is necessary since $\curl{}\Hfield_h \in
H(\text{div})$ might not have well-defined pointwise values.

In order to define the residual of the induction equation we run again into the
problem that $\curl{}\Hfield_h$ is not sufficiently smooth. We resort again to
the use of the $L^2$-projector and define the residual as:
\begin{align}
\label{ResidualDef}
\begin{aligned}
& R_h^{n}(\xcoord) = \Hfield_h^n(\xcoord) - \Hfield_h^{n-1}(\xcoord)
- \dt_{n-1} \curl{}(\vel_h^{n- \frac{1}{2}} \times \Hfield^{n- \frac{1}{2}})\\
& \ \ \ + \dt_{n -1} \curl{}(\tfrac{\resist^n}{\mu}
\Pi_{\FESpaceHypComp}^{\mathrm{L}}
[\curl{}\Hfield_h^{n - \frac{1}{2}}](\xcoord)
- \tfrac{d_i}{\rho}
\Pi_{\FESpaceHypComp}^{\mathrm{L}}[\curl{}\Hfield_h^{n-\frac{1}{2}}](
\xcoord ) \times
\Hfield_h^n(\xcoord)),
\end{aligned}
\end{align}
where lumped $L^2$-projector $\Pi_{\FESpaceHypComp}^{\mathrm{L}}$ was
defined in \eqref{LumpedL2}. Note that the residual $R_h^n(\xcoord) \in
L^2(\domain)$ belongs to neither space $[\FESpaceHypComp]^d$ nor $\FESpaceH$.
Since we want to recover point values of the residual we compute its
$L^2$-projection onto the nodal vector-valued space
$[\FESpaceHypComp]^d$:
\begin{align}
\label{ResidualProj}
\mathcal{R}_h^n(\xcoord) := \Pi_{\FESpaceHypComp}^{\mathrm{L}}[R_h^n(\xcoord)]
\end{align}
and define the re-scaled residual as:
\begin{align}
\label{ResidualNormalized}
\widehat{\mathcal{R}}_h^n(\xcoord) =
\frac{\mathcal{R}_h^n(\xcoord)}{\max_{\xcoord \in
\domain}|\Hfield_h^n(\xcoord) -
\overline{\Hfield_h^n(\xcoord)}|_{\ell^2(\mathbb{R}^d)}}
\ \text{ where } \
\overline{\Hfield_h(\xcoord)} =
\frac{1}{|\domain|} \int_{\domain}\Hfield_h^n(\xcoord) \dx.
\end{align}
Finally, we define the residual-based resistivity as:
\begin{align}
\label{ArtResistResidual}
\resist_i^{\text{res}} = c_{\text{res}} h_i^2
\widehat{\mathcal{R}}_h^{n}(\xcoord_i).
\end{align}
Here again, $c_{\text{res}}$ is a constant of $\mathcal{O}(1)$. In practice we
use $c_{\text{res}} = 1.0$ for all our computations.

%%%%%%%%%%%%%%%%%%%%%%%%%%%%%%%%%%%%%%%%%%%%%%%%%%%%%%%%%%%%%%%%%%%%%%%%%%%%%%%%
%%%%%%%%%%%%%%%%%%%%%%%%%%%%%%%%%%%%%%%%%%%%%%%%%%%%%%%%%%%%%%%%%%%%%%%%%%%%%%%%
%%%%%%%%%%%%%%%%%%%%%%%%%%%%%%%%%%%%%%%%%%%%%%%%%%%%%%%%%%%%%%%%%%%%%%%%%%%%%%%%

\subsection{Algorithmic summary.}\label{sec:all_the_scheme}

\begin{algorithm}[H]
  \caption{\texttt{momentum\_and\_h\_field\_update}($
    \{\rho_h^n, \mom_h^{n}, \Hfield_h^{n}, \dt \}$)}
  \label{VelAndHcnUpdate}
  \begin{align*}
    &\texttt{Define:}\, \vel_h^{n} := \sum_{i \in \HypVertices}
      \tfrac{\mom_i^{n}}{\rho_i^n} \phi_i \\
    &\texttt{Find:} \{\vel_h^{n+1},\Hfield_h^{n+1}\} \in \mathbb{V}_h^d \times
\FESpaceHtangent \ \texttt{such that} \\
& \ \ \ \ \ \left\{
\begin{aligned}
&\langle \rho_h^n (\vel_h^{n+1} - \vel_h^n), \veltest_h  \rangle
- \dt \mu ( (\curl{}\Hfield_h^{n+\frac{1}{2}} \times
\Hfield_h^{n+\frac{1}{2}})
, \veltest_h )_{\Ltwo}
= \bzero \\
& \mu (\Hfield_h^{n+1} - \Hfield_h^n , \Htest_h)_{\Ltwo}
+ \dt \mu ((\curl{}\Htest_h \times \Hfield_h^{n+\frac{1}{2}}) ,
\vel_h^{n+\frac{1}{2}} )_{\Ltwo} \\
& \ \ \ + \dt ( \resist_h^n \curl{}\Hfield_h^{n+\frac{1}{2}}
, \curl{}\Htest_h)_{\Ltwo} \\
& \ \ \ + \dt \mu d_i \big(\tfrac{1}{\rho_h^n}
(\curl{}\Hfield_h^{n+\frac{1}{2}}
\times \Hfield_h^{n+\frac{1}{2}}), \curl {}\Htest_h \big)_{\Ltwo}
= \bzero \\
\end{aligned}
\right. \\
&\texttt{where }\vel_h^{n+\frac{1}{2}} := \tfrac{1}{2}(\vel_h^{n} +
\vel_h^{n+1}) \texttt{ and } \Hfield_h^{n+\frac{1}{2}} :=
\tfrac{1}{2}(\Hfield_h^{n} + \Hfield_h^{n+1}). \\
&\texttt{Define:}\, \mom_h^{n+1} := \sum_{i \in \HypVertices}
(\vel_i^{n+1} \rho_i^n) \phi_i \\
&\texttt{Return:} \{\mom_h^{n+1}, \Hfield_h^{n+1}\}
\end{align*}
\textit{Comments: this algorithm corresponds with the solution of the source
sytem \eqref{SourceSchemeVar}. Here it is important to note that the actual
computational implementation requires a change of variables: note the definition
of the velocity finite element function $\vel_h^{n}$ at the very beginning.
Similarly, by the end of the algorithm we also have to return to the original
dependent variable (momentum) and define the finite element function
$\mom_h^{n+1}$, which is a return argument of the method.}
\end{algorithm}

\begin{algorithm}[H]
  \caption{\texttt{source\_update}($\{\rho_h^n, \mom_h^{n},
    \totme_h^n, \Hfield_h^{n}, \dt \}$)}
  \label{SourceCNalg}
  \begin{align}
    \nonumber
    &\{\mom_h^{n+1}, \Hfield_h^{n+1}\} :=
      \texttt{momentum\_and\_h\_field\_update}(
      \{\rho_h^n, \mom_h^{n}, \Hfield_h^{n}, \dt \}) \\
    \nonumber
    &\texttt{for } i \in \HypVertices \\
    \nonumber
    % \label{UpdateDens}
    &\ \ \ \ \dens_i^{n+1} := \dens_i^{n} \\
    & \ \ \ \ J_i^{n+\frac{1}{2}} := \frac{1}{m_i}
  \int_{\domain} \resist_h^n |\curl \Hfield_h^{n+\frac{1}{2}}|^2 \phi_i \dx \\
    %
    % \label{UpdateTotMe}
    \nonumber
    &\ \ \ \ \totme_i^{n+1} = \totme_i^{n}
                        + \frac{1}{2} \frac{|\mom_i^{n+1}|^2}{\rho_i^n}
                        - \frac{1}{2} \frac{|\mom_i^{n}|^2}{\rho_i^n}
                        + \dt \, J_i^{n+\frac{1}{2}} \\
    \nonumber
    &\texttt{end for} \\
    \nonumber
    &\texttt{Return:} \{\rho_h^{n+1}, \mom_h^{n+1}, \totme_h^{n+1},
      \Hfield_h^{n+1}\}
  \end{align}
\textit{Comments: here }\texttt{source\_update} \textit{ defines the procedure
used to approximate the whole evolution described by Operator \#2, see
\eqref{OperatorTwo}. The assignments within the for-loop correspond with the
total mechanical energy update as described in
\eqref{SourceSchemeTotme}-\eqref{AvgJouleHeat}. The method }
\texttt{momentum\_and\_h\_field\_update}
\textit{is described in Algorithm \ref{VelAndHcnUpdate}.}
\end{algorithm}

\begin{algorithm}[H]
  \caption{\texttt{hall\_mhd\_update}($\{\rho_h^n, \mom_h^{n},
    \totme_h^n, \Hfield_h^{n}\}$)}
  \label{MHDupdateAlg}
  \begin{align}
    \nonumber
    \{\rho_h^1, \mom_h^{1}, \totme_h^1, \dt_n\} &:=
    \texttt{euler\_system\_update}(\{\rho_h^n, \mom_h^{n}, \totme_h^n\}) \\
    \nonumber
    \Hfield_h^{1} &:= \Hfield_h^n \\
    \nonumber
    \{\rho_h^2, \mom_h^{2}, \totme_h^2, \Hfield_h^2\} &:=
    \texttt{source\_update}
    (\{\rho_h^1, \mom_h^{1}, \totme_h^1, \Hfield_h^{1}, 2 \dt_n\}) \\
    \nonumber
    \{\rho_h^{n+1}, \mom_h^{n+1}, \totme_h^{n+1}\}
    &:=
    \texttt{euler\_system\_update}
    (\{\rho_h^2, \mom_h^{2}, \totme_h^2, \dt_n\}) \\
    \nonumber
    \Hfield_h^{n+1} &:= \Hfield_h^2 \\
    \nonumber
    \texttt{Return}&: \{\rho_h^{n+1}, \mom_h^{n+1}, \totme_h^{n+1},
                      \Hfield_h^{n+1}\}
  \end{align}
  \textit{Comments: the procedure }\texttt{euler\_system\_update} \textit{
    represents an Euler's solver satisfying the assumptions described in Section
    \ref{sec:hypsolver_ass}. On the other hand, the implementation of}
    \texttt{source\_update} \textit{is described in Algorithm \ref{SourceCNalg}.
    Note that the time-step size $\dt_n$ is determined during the
    first call to }\texttt{\texttt{euler\_system\_update}}\textit{, then the
    method} \texttt{source\_update} \textit{and the second call to}
    \texttt{\texttt{euler\_system\_update}} \textit{have to comply with such
    time-step size. Finally, note that the return argument
    $\state_h^{n+1} = [\rho_h^{n+1}, \mom_h^{n+1}, \totme_h^{n+1},
    \Hfield_h^{n+1}]^\transp$ represents the solution at time $t^{n+1} =
    t^n + 2\dt^n$.}
\end{algorithm}

For the sake of completeness we summarize the properties satisfied by the method
$\texttt{hall\_mhd\_update}$ as defined in Algorithm \ref{MHDupdateAlg} in the
following proposition.

\begin{proposition}[Properties preserved by the method
$\texttt{hall\_mhd\_update}$] For the sake of simplicity that boundary
conditions $\mom\cdot\normal = 0$ and $\Hfield \times \normal = \bzero$ are
satisfied. Then
\begin{itemize}
\item[\itemizebullet] \textit{Energy stability.} If we assume the method used
to solve Euler's equation \\
$\texttt{euler\_system\_update}$ satisfies the
conservation assumption \eqref{consMasss}, then the output produced by method
$\texttt{hall\_mhd\_update}$ satisfies the total conservation of energy property
\begin{align*}
% \label{SourceTotalEnergy}
\begin{gathered}
\sum_{i \in \HypVertices} m_i \totme_i^{n+1}
+ \tfrac{\mu}{2} \|\Hfield_h^{n+1}\|_{\Ltwo}^2
=
\sum_{i \in \HypVertices} m_i \totme_i^{n}
+ \tfrac{\mu}{2} \|\Hfield_h^n\|_{\Ltwo}^2 .
\end{gathered}
\end{align*}
\item[\itemizebullet] \textit{Admissibility.} If the method used to
solve $\texttt{euler\_system\_update}$ satisfies the admissibility requirements
outlined in the second bullet of Section \ref{sec:hypsolver_ass}, then the
resulting solution $\texttt{hall\_mhd\_update}$ is admissible as well. That is,
\begin{align*}
\state_i^{n+1} = [\rho_i^{n+1}, \mom_i^{n+1}, \totme_i^{n+1}] \in \mathcal{A}
\ \ \text{for all}  \ i \in \HypVertices
\end{align*}
where the set $\mathcal{A}$ was defined in
\eqref{AdmissSet}.
\item[\itemizebullet] \textit{Entropy-dissipation properties.} If the method
$\texttt{euler\_system\_update}$ satisfies the entropy-dissipation inequality
\eqref{EulerSolverDissipation} then the method $\texttt{hall\_mhd\_update}$
satisfies the entropy dissipation inequality as well. In particular we have that
\begin{align*}
\sum_{i \in \HypVertices} m_i \eta(\state_i^{n+1}) \leq \sum_{i \in
\HypVertices} m_i \eta(\state_i^{n})
\end{align*}
\item[\itemizebullet] \textit{Involution constraints.} The method
$\texttt{hall\_mhd\_update}$ satisfies the following involution constraint:
\begin{align}
\label{InvolutionAgain}
(\Hfield_h^{n+1}, \nabla \EpotTest_h)_{\Ltwo} = (\Hfield_h^{n}, \nabla
\EpotTest_h)_{\Ltwo} \ \ \text{for all }\EpotTest_h \in\FESpacePotZero
\end{align}
\end{itemize}
\end{proposition}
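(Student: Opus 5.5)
The plan is to treat Algorithm~\ref{MHDupdateAlg} as a composition of three maps, $\state^n \mapsto \state^1 \mapsto \state^2 \mapsto \state^{n+1}$, and to check that each stage preserves the corresponding property, or in the case of energy and entropy decreases it. The properties then chain through the composition. Two stages are calls to \texttt{euler\_system\_update}, whose behaviour is given by the assumptions of Section~\ref{sec:hypsolver_ass}. The middle stage is \texttt{source\_update} with time step $2\dt_n$, whose behaviour is given by Proposition~\ref{Prop:sourceupdate}. Nothing in that proposition depends on the particular value of the time step, so the step $2\dt_n$ causes no difficulty.

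\textbf{Energy.} In each Euler stage $\Hfield_h$ is unchanged ($\Hfield_h^1=\Hfield_h^n$ and $\Hfield_h^{n+1}=\Hfield_h^2$), so the magnetic energy $\tfrac{\mu}{2}\|\Hfield_h\|_{\Ltwo}^2$ is untouched. The third identity of \eqref{consMasss} gives $\sum_i m_i\totme_i^1=\sum_i m_i\totme_i^n$, and similarly from stage 2 to $n+1$. I read \eqref{consMasss} as valid under the boundary condition $\mom\cdot\normal=0$ as well as under periodicity. For the source stage, \eqref{SourceTotalEnergy} gives exactly the required identity, using $\Hfield\times\normal=\bzero$ through the space $\FESpaceHtangent$. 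Chaining the three equalities proves the first bullet.

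\textbf{Admissibility.} The input $\state^n$ is admissible by hypothesis, so the Euler assumption gives $\state_i^1\in\mathcal{A}$. The source stage needs \eqref{OpTwoSourcePointAss}, which also contains positivity of the temperature. Here I expect the one delicate point. I would argue that $\rho_i^1>0$ and $e_i^1>0$, and then appeal to the thermodynamic assumptions \eqref{posTempAssump} of \ref{app:thermo}, which give $\theta>0$ on admissible states. If that appeal is not available, I would instead note that the second and third pointwise claims of \eqref{SourcePointwiseProp} only require $\rho_i>0$ and $J_i^{n+\frac12}\ge0$. Either way we get $\rho_i^2=\rho_i^1>0$ and $e_i^2\ge e_i^1>0$. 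Since $\rho_i^2 e_i^2=\totme_i^2-\tfrac{1}{2\rho_i^2}|\mom_i^2|^2$, this gives $\state_i^2\in\mathcal{A}$. The last Euler call then yields $\state_i^{n+1}\in\mathcal{A}$.

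\textbf{Entropy.} Apply \eqref{EulerSolverDissipation} to each Euler stage. For the source stage, multiply the nodewise inequality $\eta(\state_i^2)\le\eta(\state_i^1)$ from \eqref{SourcePointwiseProp} by $m_i>0$ and sum over $i$. This nodewise inequality is valid because admissibility was established above. Chaining the three inequalities gives the global entropy inequality.

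\textbf{Involution.} $\Hfield_h$ is unchanged in both Euler stages. In the source stage, \eqref{involutionProp} gives $(\Hfield_h^2,\nabla\EpotTest_h)_{\Ltwo}=(\Hfield_h^1,\nabla\EpotTest_h)_{\Ltwo}$ for all $\EpotTest_h\in\FESpacePotZero$. Composing these yields \eqref{InvolutionAgain}.
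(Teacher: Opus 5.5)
Your proposal is correct and follows the paper's own argument. Both treat Algorithm~\ref{MHDupdateAlg} as a sequential composition, handle the two Euler stages with the assumptions of Section~\ref{sec:hypsolver_ass} and the source stage with Proposition~\ref{Prop:sourceupdate}, and then chain the energy equalities, the admissibility, the summed entropy inequalities and the involution identity. Your additional remarks are sound refinements of the same route rather than a different one: the step $2\dt_n$ is harmless, and membership in $\mathcal{A}$ only needs $\rho_i^2=\rho_i^1>0$ and $e_i^2\ge e_i^1>0$, whereas the paper also cites the temperature and entropy monotonicity at that point.
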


\begin{proof} The proofs mostly boil down to invoking Proposition
\ref{Prop:sourceupdate}, the assumptions on the hyperbolic solver described
in Section \ref{sec:hypsolver_ass}, and the sequential nature of operator
splitting:
\begin{itemize}
\item[\itemizebullet] \textit{Energy stability.} It follows by a sequential
argument: just use assumption \eqref{consMasss} for the hyperbolic solver and
and the discrete energy property \eqref{SourceTotalEnergy} of Operator
\#2.
\item[\itemizebullet] \textit{Admissibility.} Again it follows by the
sequential nature of operator splitting. Regarding Operator \#1 we invoke the
assumption in the third bullet of Section \ref{sec:hypsolver_ass}: the
hyperbolic solver preserves admissibility. Regarding Operator \#2 we invoke
pointwise properties \eqref{SourcePointwiseProp} which show that
specific internal energy, temperature and specific entropy only increase
during discrete evolution of Operator \#2.
\item[\itemizebullet] \textit{Entropy-dissipation.} Let's assume
that the hyperbolic solver satisfies the entropy-dissipation property
\eqref{EulerSolverDissipation}. On the other hand, the
the algorithm $\texttt{source\_update}$ satisfies the inequality
$\eta(\state_i^{2}) \leq \eta(\state_i^{1})$, see expression
\eqref{SourcePointwiseProp}. Multiplying this inequality by $m_i$ and adding
for all $i \in \HypVertices$ we obtain that the source-update scheme satisfies:
\begin{align*}
\sum_{i \in \HypVertices} m_i \eta(\state_i^{2}) \leq \sum_{i \in
\HypVertices} m_i
\eta(\state_i^{1}) .
\end{align*}
The global entropy-dissipation property follows by a sequential argument.
\item[\itemizebullet] \textit{Involution constraints.} We start by noting that
the magnetic field $\Hfield_h$ does not get modified during the evolution of
Operator \#1. On the other hand, $\texttt{source\_update}$ preserves the
involution property \eqref{InvolutionAgain} as detailed in Proposition
\ref{Prop:sourceupdate} formula \eqref{involutionProp}. Therefore, it follows
by the sequential nature of operator splitting that the method
$\texttt{hall\_mhd\_update}$ preserves the involution constraint.
\end{itemize}
\end{proof}

%%%%%%%%%%%%%%%%%%%%%%%%%%%%%%%%%%%%%%%%%%%%%%%%%%%%%%%%%%%%%%%%%%%%%%%%%%%%%%%%
%%%%%%%%%%%%%%%%%%%%%%%%%%%%%%%%%%%%%%%%%%%%%%%%%%%%%%%%%%%%%%%%%%%%%%%%%%%%%%%%
%%%%%%%%%%%%%%%%%%%%%%%%%%%%%%%%%%%%%%%%%%%%%%%%%%%%%%%%%%%%%%%%%%%%%%%%%%%%%%%%
%%%%%%%%%%%%%%%%%%%%%%%%%%%%%%%%%%%%%%%%%%%%%%%%%%%%%%%%%%%%%%%%%%%%%%%%%%%%%%%%
%%%%%%%%%%%%%%%%%%%%%%%%%%%%%%%%%%%%%%%%%%%%%%%%%%%%%%%%%%%%%%%%%%%%%%%%%%%%%%%%
%%%%%%%%%%%%%%%%%%%%%%%%%%%%%%%%%%%%%%%%%%%%%%%%%%%%%%%%%%%%%%%%%%%%%%%%%%%%%%%%

% \newpage
\section{Numerical results}
\label{sec:numeric}
% Description, domain xyz notation, normalization, parameters (cfl, RV, P1 pol)
In this section, we demonstrate the validity and robustness of the proposed
scheme. We first verify the solver by reproducing the whistler-wave dispersion
relation and obtaining high-order convergence for the linearized equations
(Section~\ref{sec:whistler}), before validating it in the nonlinear regime using
the GEM challenge problem (Section~\ref{sec:GEM}). Finally, we present novel
simulations of the Orszag--Tang vortex (Section~\ref{sec:OT}) and conclude with
a study of mesh sensitivity and the robustness of the proposed scheme
(Section~\ref{sec:mesh}).

Throughout this section, we use $P^1$ polynomials, ideal Equation of State with
adiabatic constant $\gamma=5/3$ and artificial resistivity constants
$c_{\text{low}} = 0.25$ and $c_{\text{res}} = 1$. The CFL constant is taken 0.5
for all the simulations
except for those in Section~\ref{sec:OT}. The equations are expressed in
nondimensional form, with velocities scaled by the Alfvén speed
$v_A=\HfieldComponent_0\sqrt{\mu/\rho_0}$, pressure by
$p_0=\mu\HfieldComponent_0^2$ and lengths scaled by the ion skin depth $d_i$.
Consequently, $d_i$ denotes a non-dimensional parameter in this section.

\subsection{Whistler wave}
\label{sec:whistler}

The simplest manifestation of the Hall term is the propagation of whistler
waves. The linearized Hall resistive MHD equations admit theoretical wave
solutions under uniform density and pressure, which can be used to perform an
error convergence test. Similar tests have been carried out for the case of
pure Hall MHD, see for example \cite{Chacon2025,Bard_2026}. Here, we also take
resistivity into consideration.

More precisely, we choose a background field along the 
$\unit_0$ direction and assume a small transverse wave perturbation propagating
along
$\unit_0$:
\begin{align*}
    \Hfield = \HfieldComponent_0\,\unit_0 + \delta\Hfield,\qquad
\delta\Hfield = \bigl(0,\,\delta \HfieldComponent,\,\delta
\HfieldComponent\bigr)\,e^{i(kx_0 - \omega t)}.
\end{align*}
Inserting this ansatz into the induction and momentum equations leads to the
following
dispersion relation for a right-hand polarized whistler wave
\begin{align}
    \label{DispersionRelation}
    \omega = {\frac{\omega_H+\sqrt{\omega_H^2+4\omega_A^2}}{2}}
    - i\frac{r k^2}{2}\!\left(1 +
\frac{\omega_H}{\sqrt{\omega_H^2+4\omega_A^2}}\right) + O(r^2),
\end{align}
with $\omega_H := d_ik^2\HfieldComponent_0/\rho$ and
$\omega_A^2:=k^2\HfieldComponent^2_0/\rho$. The
corresponding wave solution
is
\begin{align}\label{whislerSolution}
\begin{aligned}
    v_{1} &= -\delta v \;e^{\Im (\omega)t}\cos(kx_0 - \Re(\omega) t), &
\HfieldComponent_{1} &=
 \ \ \delta \HfieldComponent \;e^{\Im (\omega)t}\cos(kx_0 - \Re(\omega) t), \\
    v_{2} &=  \ \ \delta v \;e^{\Im (\omega)t}\sin(kx_0 - \Re(\omega) t), &
\HfieldComponent_{2} &=
-\delta \HfieldComponent \;e^{\Im (\omega)t}\sin(kx_0 - \Re(\omega) t),
\end{aligned}
\end{align}
where $\delta v$ is related to $\delta \HfieldComponent$ through the momentum
equation as
\begin{align*}
    \displaystyle\delta v = \frac{\,k \HfieldComponent_0}{\omega\rho}\,\delta
\HfieldComponent.
\end{align*}
We follow a setup analogous to that of \cite{Daldorff_2014}, with a doubly
periodic 
domain $[-L_x, L_x]\times[-L_y, L_y]$ with $L_x=80/3$ and $L_y=20$ with a
propagation direction $\unit_0$ forming
an angle $\varphi=\arctan(4/3)$
with respect to the $x$-axis. In the computational frame, the initial
perturbation phase becomes
${\Phi(x,y) = k(x\cos\varphi + y\sin\varphi).}$ This results in the following
set of initial conditions
\begin{align*}
v_{x} &= \ \ \delta v \sin\varphi\cos\Phi, & \HfieldComponent_{x} &=
\HfieldComponent_0\cos\varphi - \delta \HfieldComponent
\sin\varphi\cos\Phi, & \rho &=\rho_0,\\
v_{y} &= -\delta v \cos\varphi\cos\Phi, & \HfieldComponent_{y} &=
\HfieldComponent_0 \sin\varphi +\delta \HfieldComponent
\cos\varphi\cos\Phi, & p &=p_0,\\
v_{z} &= \ \ \delta v \sin\Phi, & \HfieldComponent_{z} &= -\delta \HfieldComponent
\sin\Phi.
\end{align*}
We set $\rho_0=1$, $\HfieldComponent_0=0.2$, $p_0=5.12\times 10^{-4}$, $d_i =
1$, $\resist = 0.001$ and $\lambda = 2\pi/k = 32$. The initial perturbation
$\delta \HfieldComponent= 0.0001$ is taken small enough such that nonlinear
effects of the PDE are negligible. The relative $L^2$ norm error is
computed between the numerical solution and the theoretical solution
\eqref{whislerSolution} after a full period $T = 2\pi/\Re(\omega)$.

The convergence results over several mesh sizes are collected in
Table~\ref{tab:convergence} and plotted in Figure~\ref{fig:convergence} along
reference rate 2 lines, corresponding to the expected convergence for $P_1$
polynomials. The rates are computed using least squares. The method achieves
near-optimal convergence rates for all the components, successfully capturing
the characteristic dispersion relation for whistler waves.

\begin{figure}[h]
    \centering
    \includegraphics[width=0.99\linewidth]{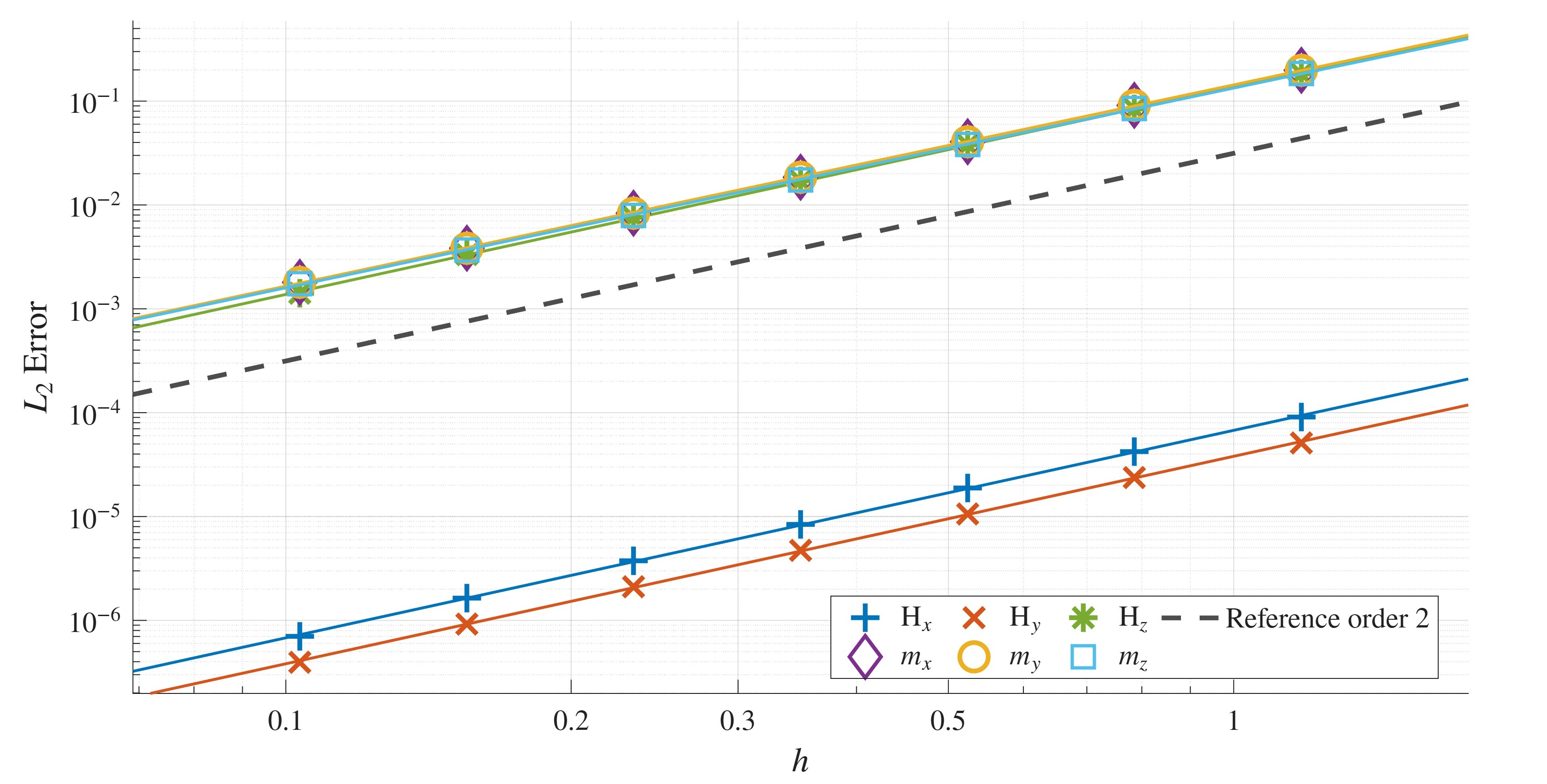}
    \caption{$L^2$ relative error for the whistler wave convergence test}
    \label{fig:convergence}
\end{figure}

\begin{table}[h]
\centering
\caption{Whistler wave $L^2$ relative errors for the components of $\Hfield$ and
$\mom$. The mesh consists of $N_x \times N_y$ rectangular cells, each split in
two elements.}
\begin{tabular}{c|ccc|ccc}
& \multicolumn{3}{c|}{$L^2$ error in $\Hfield$} 
& \multicolumn{3}{c}{$L^2$ error in $\mom$} \\
Mesh 

& $\HfieldComponent_x$ 
& $\HfieldComponent_y$ 
& $\HfieldComponent_z$ 
& $\momComponent_x$ 
& $\momComponent_y$ 
& $\momComponent_z$ \\ \hline

$64 \times 48$
& 9.10E-05
& 5.12E-05
& 1.83E-01
& 1.97E-01
& 1.97E-01
& 1.84E-01 \\

$96 \times 72$
& 4.21E-05
& 2.37E-05
& 8.44E-02
& 9.13E-02
& 9.13E-02
& 8.52E-02 \\

$144 \times 108$
& 1.87E-05
& 1.05E-05
& 3.78E-02
& 4.08E-02
& 4.08E-02
& 3.83E-02 \\

$216 \times 162$
& 8.44E-06
& 4.75E-06
& 1.71E-02
& 1.85E-02
& 1.85E-02
& 1.75E-02 \\

$324 \times 243$
& 3.74E-06
& 2.10E-06
& 7.59E-03
& 8.35E-03
& 8.35E-03
& 7.95E-03 \\

$486 \times 364$
& 1.65E-06
& 9.26E-07
& 3.33E-03
& 3.84E-03
& 3.84E-03
& 3.67E-03 \\

$729 \times 546$
& 7.04E-07
& 3.96E-07
& 1.41E-03
& 1.81E-03
& 1.81E-03
& 1.74E-03 \\ \hline

Rates
& 1.998
& 1.998
& 1.995
& 1.938
& 1.938
& 1.923

\end{tabular}
\label{tab:convergence}
\end{table}

\subsection{GEM magnetic reconnection challenge}
\label{sec:GEM}

For a nonlinear verification of the scheme, we consider the Geospace
Environmental Modeling (GEM) magnetic reconnection challenge. First introduced
by \cite{Birn2001}, it is markedly the standard and most well-studied test for
Hall MHD, as it is designed to reproduce magnetic reconnection on a perturbed
Harris sheet, where the Hall contribution is known to be critical to obtain fast
reconnection rates. Although no exact solution is known for this problem,
extensive numerical research has been carried out, and the reconnection rates
produced can be compared quantitatively.

We consider a rectangular domain $[0, L_x]\times[0, L_y]$ with $L_x=25.6$ and
$L_y=12.8$ with periodic boundary conditions for the $x$-axis. For the $y$-axis,
we use homogeneous natural boundary conditions\footnote{Sometimes referred to as
perfectly conducting boundary conditions, since it implies a zero tangential
electric field at the boundary.} by dropping the boundary terms arising from
integration by parts in the weak formulation, see expression
\eqref{preWeakForm}. The resistivity and ion-skin depth are taken to be
$r=0.005$ and $d_i=1$, respectively. We use the following set of initial
conditions: 
\begin{align*}
\rho &= \rho_0\sech^2\!\left(\frac{y-y_0}{\lambda}\right)+\rho_\infty, \quad
\quad
p = p_0 - \frac{{\HfieldComponent}_0^2}
{2}\tanh^2\!\left(\frac{y-y_0}{\lambda}\right),
\quad \quad \vel = \bzero, \\
{\HfieldComponent_x} &=
{\HfieldComponent}_0\tanh\!\left(\frac{y-y_0}{\lambda}\right)
+ \frac{\psi_0}{L_y}\pi\cos\!\left(\frac{2\pi
x}{L_x}\right)\sin\!\left(\frac{\pi(y-y_0)}{L_y}\right), \\
{\HfieldComponent_y} &=
-\frac{\psi_0}{L_x}2\pi\sin\!\left(\frac{2\pi
x}{L_x}\right)\cos\!\left(\frac{\pi(y-y_0)}{L_y}\right), \\
{\HfieldComponent_z} &= 0,
\end{align*}
where $\rho_0 = 1$, $\rho_\infty = 0.2$, ${\HfieldComponent}_0 = 1$, $p_0 =
0.6$, $\lambda = 0.5$, $y_0=6.4$ and $\psi_0 = 0.1$. Four snapshots are
illustrated in Figure~\ref{fig:GEMFine} for the out-of-plane current density
$J_z=(\curl{}\Hfield)_z$ using a mesh of 1024$\times$1024 elements. Starting
from the perturbed Harris equilibrium, the current sheet thins around the center
of the domain until it collapses into an X-line. The reconnection rate begins to
increase at $t\approx18.5$ [panel (a)], marking the onset of fast, Hall-mediated
reconnection. The precise location of the onset time is particularly
sensitive to space and time discretization. This is notable for the
cases of coarse meshes that do not fully resolve high-frequency whistler waves,
and for the case of anisotropic meshes where the onset can be delayed. After
the onset, magnetic energy is rapidly converted into kinetic and thermal energy,
driving an outflow jet away from the X-line [panel (b)]. Once the jet reaches
the periodic boundary, it thickens and interacts with itself, and the current
density at the center reverses sign [panels (c)–(d)].

The reconnected flux, $\Psi(t) = \int_{L_x/2}^{L_x}\HfieldComponent_y(x,
y=L_x/2)\;dx$ is plotted in Figure~\ref{fig:reconnexionrate} for both
Hall-resistive MHD and only resistive MHD. The rate of reconnection is
considerably larger for the Hall simulation, agreeing with the results in, for
instance, \cite{Birn2001, Strumik2017, Chacon2025}.

\begin{figure}[h]
    \centering

    % First row
    \begin{subfigure}{0.48\textwidth}
        \centering
        \includegraphics[width=\linewidth]{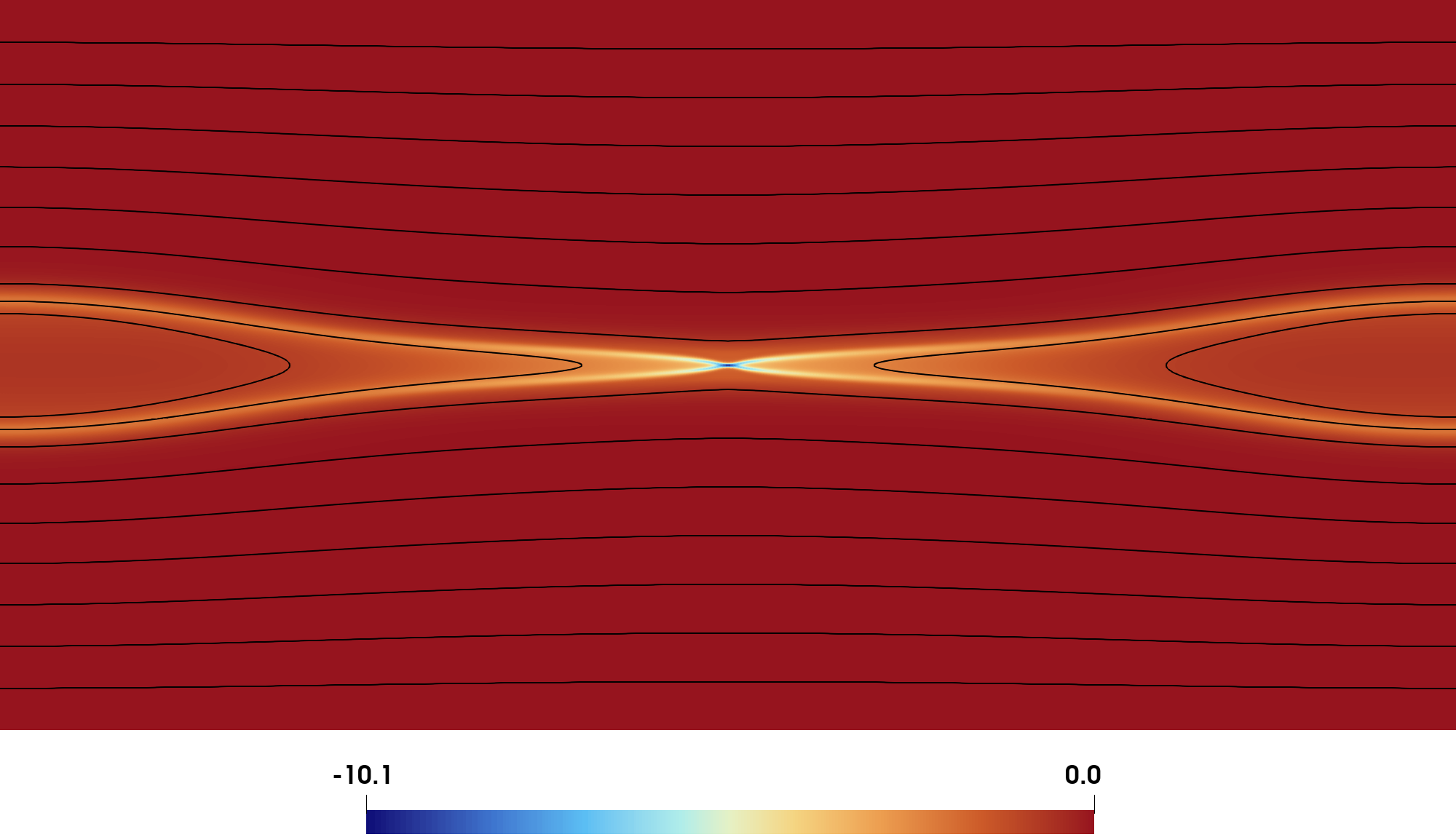}
        \caption{$t=18.5$}
        \label{fig:gem18}
    \end{subfigure}
    \hfill
    \begin{subfigure}{0.48\textwidth}
        \centering
        \includegraphics[width=\linewidth]{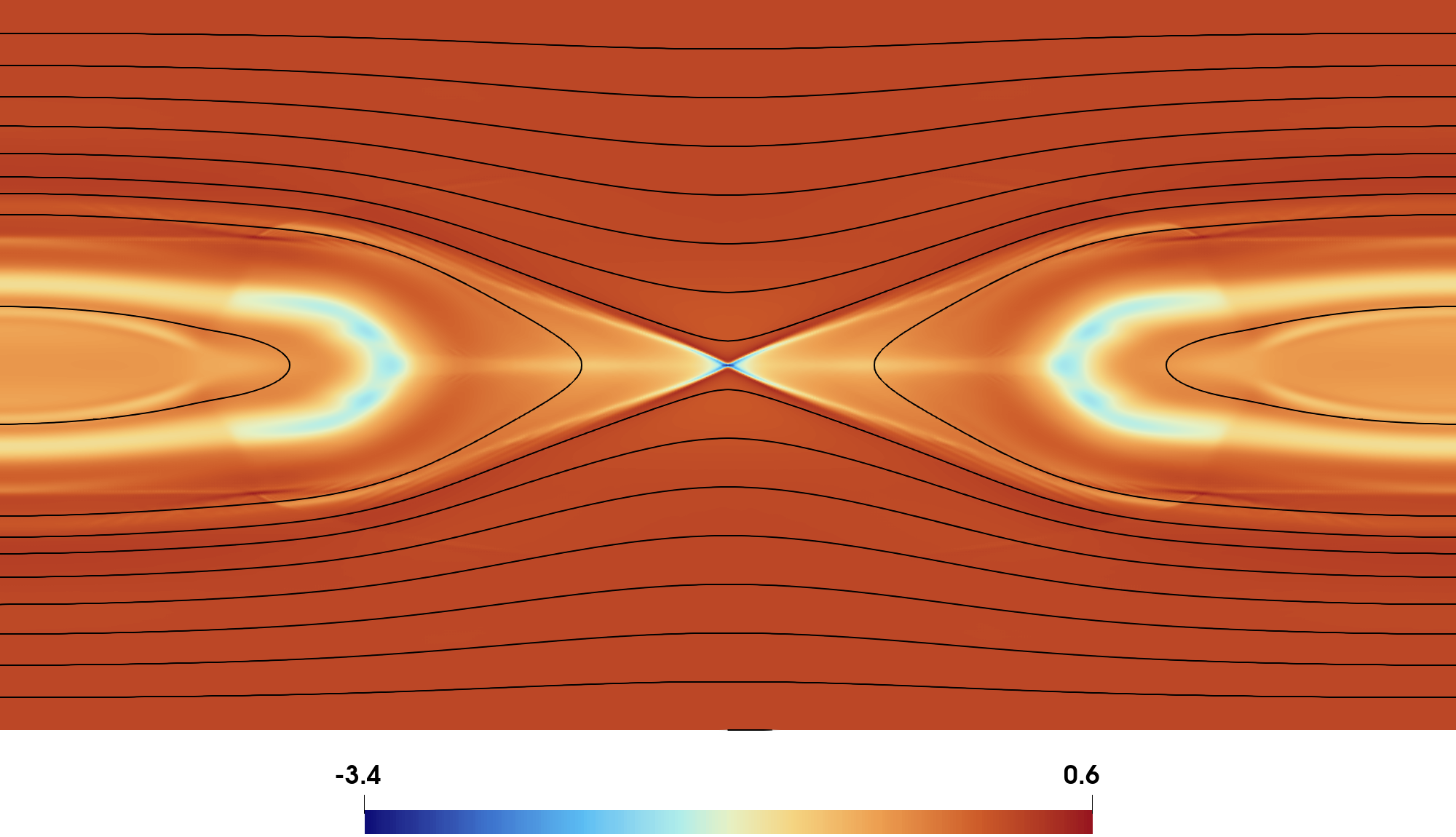}
        \caption{$t=25$}
        \label{fig:gem25}
    \end{subfigure}

    \vspace{0.5em}

    % Second row
    \begin{subfigure}{0.48\textwidth}
        \centering
        \includegraphics[width=\linewidth]{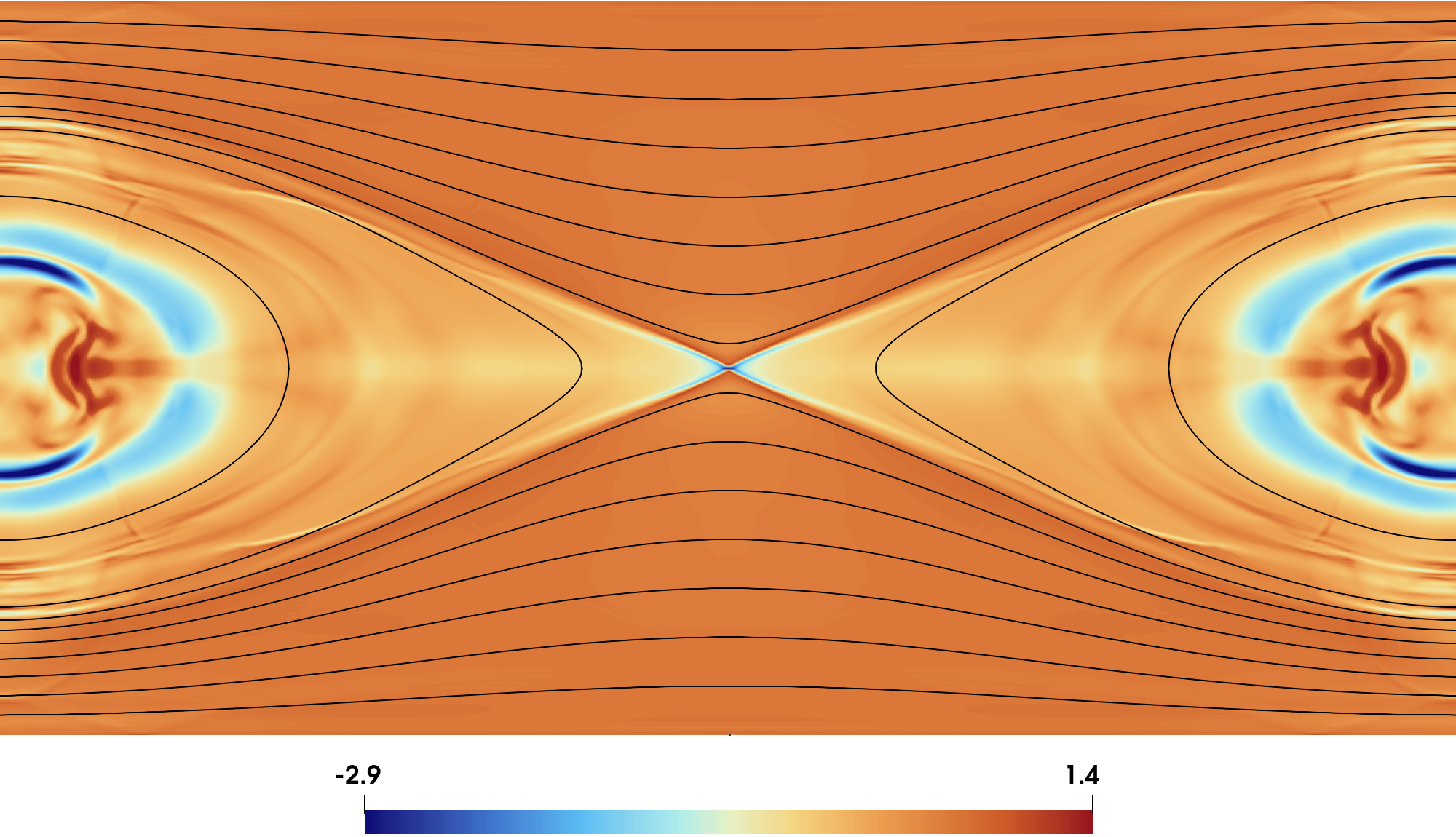}
        \caption{$t=30$}
        \label{fig:gem30}
    \end{subfigure}
    \hfill
    \begin{subfigure}{0.48\textwidth}
        \centering
        \includegraphics[width=\linewidth]{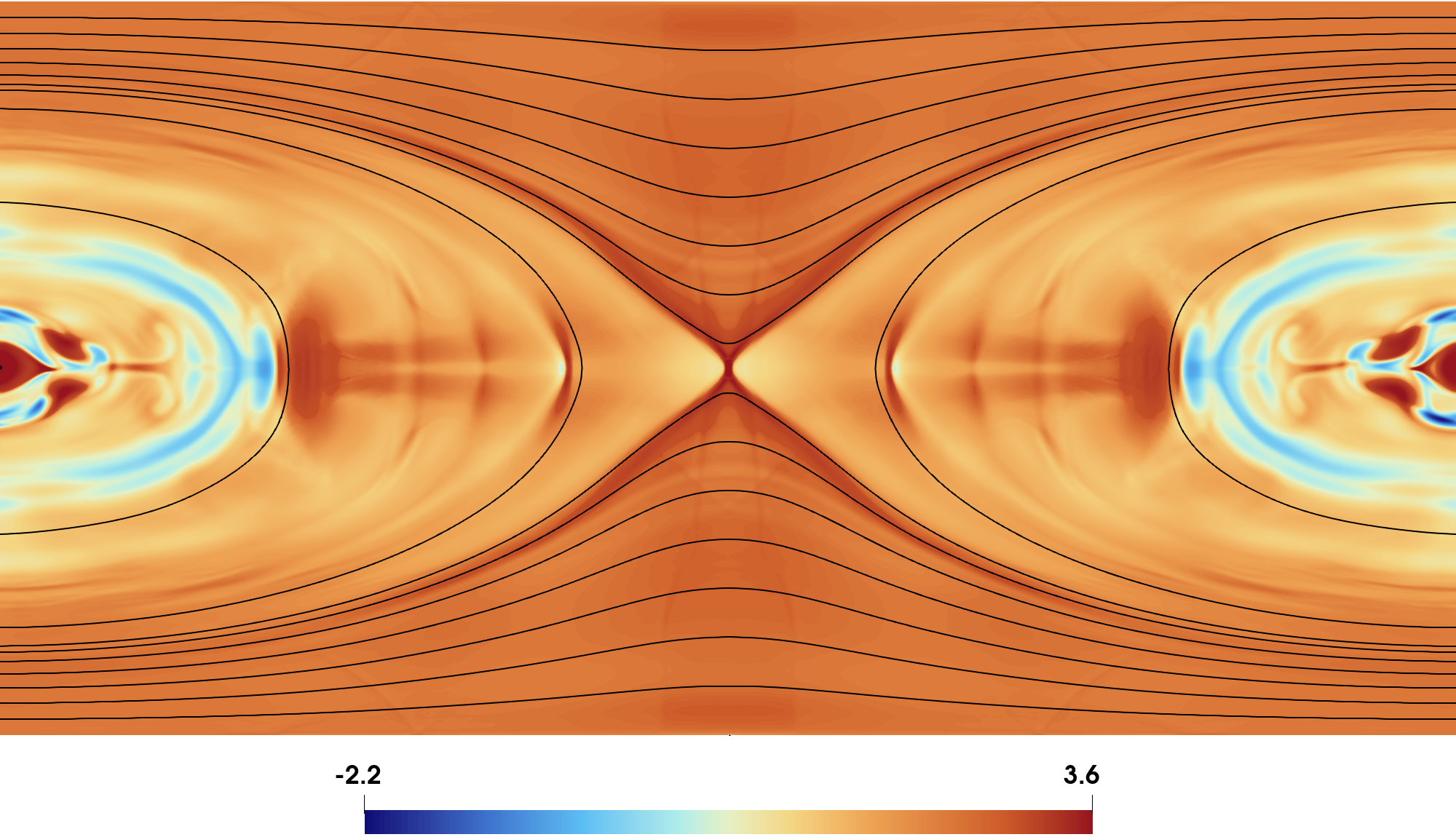}
        \caption{$t=40$}
        \label{fig:gem40}
    \end{subfigure}

    \caption{GEM magnetic reconnection challenge out-of-plane component of the
current density for $1024\times1024$ elements at times $t=18$, $25$, $30$, and
$40$. Contour plots of the magnetic field $\Hfield$ are shown in black.}
    \label{fig:GEMFine}
\end{figure}

\begin{figure}[h]
    \centering
    \includegraphics[width=0.99\linewidth]{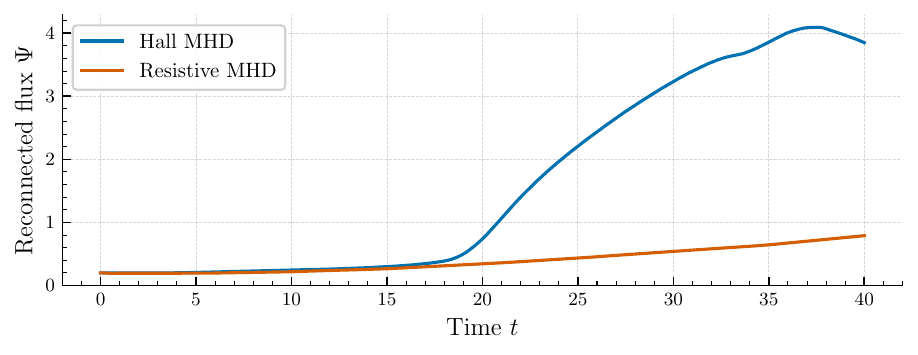}
    \caption{Reconnection rate comparison between resistive MHD with and without
the Hall term for GEM magnetic reconnection challenge.}
    \label{fig:reconnexionrate}
\end{figure}

\subsection{Orszag--Tang vortex}
\label{sec:OT}

The Orszag–Tang vortex is a standard benchmark for ideal MHD \cite{Orszag_1979},
as it evaluates the capability of a numerical scheme to resolve nonlinear shocks
and turbulence behavior. The Orszag--Tang vortex has rarely been considered in
the context of Hall MHD. While there exist studies adopting a
quasi-incompressible formulation \cite{Foldes_2023, Parashar_2009,
Stawarz_2015}, investigations in the compressible regime remain limited.
Bard et al.~\cite{Bard_2026} simulated the compressible Hall MHD Orszag--Tang
problem, but without explicit resistivity, therefore lacking a
mechanism to trigger reconnection. Multi-component kinetic frameworks
\cite{liu2017} have been shown to recover the compressible Hall-resistive
structures asymptotically in the fluid limit. However, to the best of our
knowledge, this work presents the first macroscopic simulation of the
fully compressible, resistive Hall-MHD equations for the Orszag--Tang vortex,
systematically evaluating the flow across a wide range of non-dimensional
ion skin depth values, from purely resistive MHD to strongly Hall-dominated
regimes, up to $t=1$.

The setup for the Orszag--Tang vortex test consists of a periodic
square domain $\Omega = [0,1]\times[0,1]$ with initial data
\begin{align*}
(\rho, \vel, p, \Hfield)
=
\left( \frac{25}{36\pi}, (-\sin(2\pi y),
\sin(2\pi x), 0), \frac{5}{12\pi}, \left( -\frac{\sin(2\pi y)}{\sqrt{4\pi}},
\frac{\sin(4\pi x)}{\sqrt{4\pi}},0 \right) \right),
\end{align*}
We choose a resistivity $\resist = 0.001$, sufficiently small for resistive
diffusion to remain subdominant compared to the Hall term. Previous studies have
shown that the reconnection rate is largely insensitive to the particular
mechanism responsible for breaking the frozen-in condition \cite{Birn2001}, and
the values of $d_i$ employed here are sufficient for the Hall effects to
trigger reconnection, as discussed later. Although $\resist = 0.005$ is
commonly used in the GEM setup and already produces reconnection, we have
(intentionally) adopted a smaller value in order to assess the robustness of the
solver under more demanding conditions.

\begin{figure}[h]
\centering

\includegraphics[width=130mm]{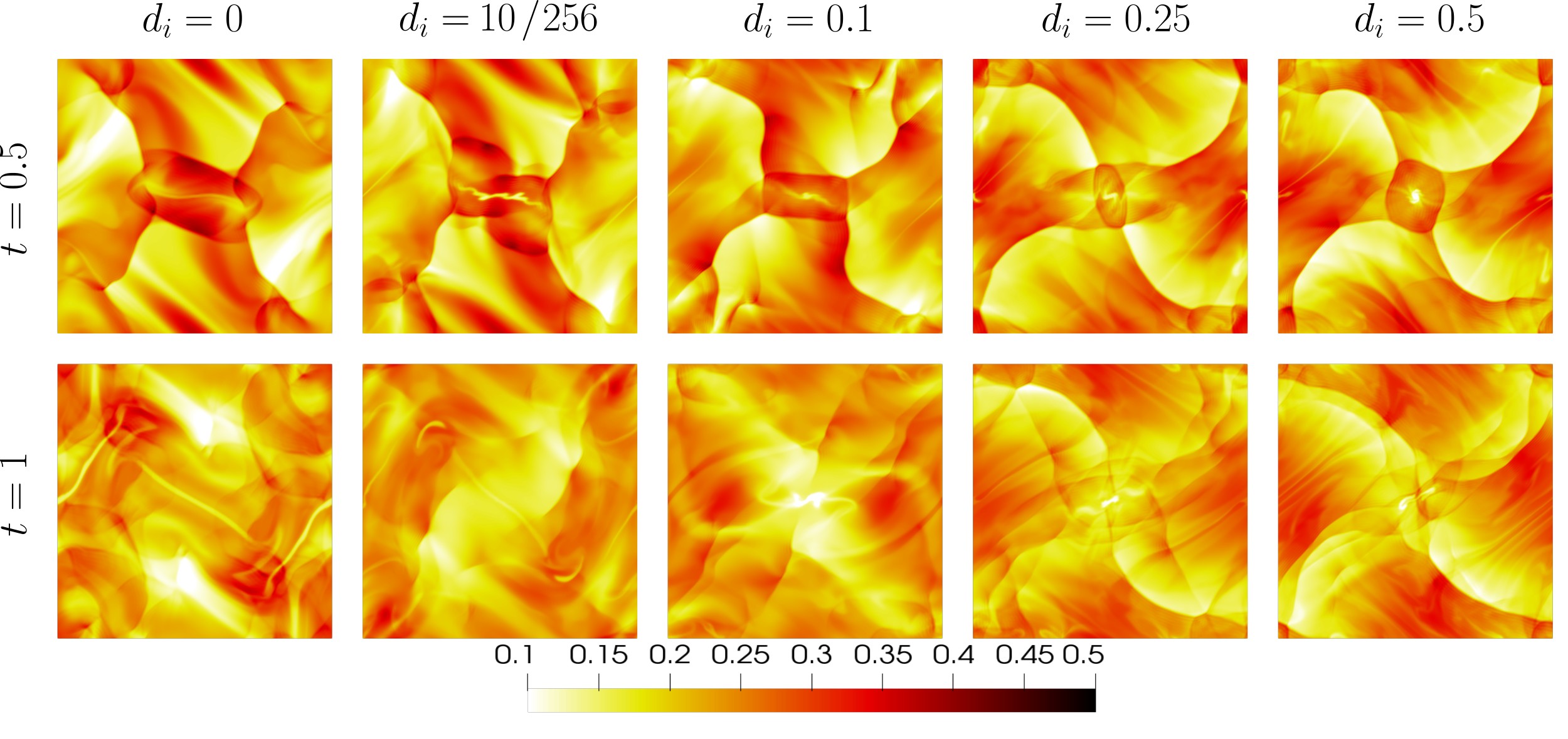}
\caption{Density distribution of the Orszag--Tang vortex on a mesh with
$256\times256$ nodal points at times $t=0.5$ and $t=1$ for ion skin
depth values $d_i=0, 10/256, 0.1, 0.25$ and $0.5$.}
\label{fig:gridDelta}
\end{figure}

The density fields at $t=0.5$ and $t=1$ are displayed in
Figure~\ref{fig:gridDelta} for a $256\times256$ nodes grid, and
several values of the ion skin depth. We include $d_i=0$ as a purely resistive
reference case, $d_i=256/10$ such that the characteristic Hall scale is
resolved by approximately 10 grid points, and $d_i=0.1,\:0.25,\:0.5$ to
illustrate a strongly Hall-dominated regime. We stress that these $d_i$ values
are relatively large and make the problem numerically demanding. For instance,
when $d_i=0.5$, the Hall term operates on a length scale comparable to half the
domain size, much larger than in typical benchmark configurations. Under these
conditions, the electron velocity is between one and two orders of magnitude
greater than the ion velocity. The Jacobian becomes less coercive, which can
impose very restrictive time-step constraints; see \ref{app:jacobian} for
details. For $d_i=0.25$ and 0.5, the CFL constant was lowered to 0.1.

The results shown in Figure~\ref{fig:gridDelta} reveal that in the purely resistive case [first column], the solution resembles a more diffusive
version of the ideal case, as magnetic reconnection occurs at a significantly
slower rate compared to Hall simulations. When
increasing the strength of $d_i$ and consequently the scale of Hall dynamics,
the solution is affected rather drastically. A low-density core forms at the
center of the domain, and a different shock distribution emerges.

To better illustrate the solution in the so-called magnetically dominated
regime, Figure~\ref{fig:OTfine} shows a fine-mesh computation with
$724\times724$ elements of the density and out-of-plane current distributions
for the highest value of the ion skin depth tested, $d_i=0.5$. A CFL of $0.05$
was required to guarantee coercivity of the Jacobian. Under these conditions,
magnetic reconnection develops almost immediately: at $t=0.02$ an X-point
geometry has already formed and the current density reaches its maximum
value of the simulation, with $|J_z|\sim 84$. As reconnection proceeds,
the magnetic energy is converted into kinetic and thermal energy, producing a
low-density core and a GEM-like outflow jet structure emanating from the
central reconnection site, as seen in $t=0.36$. Finally, by $t=1$ the flow has
transitioned into a fully turbulent state: the density field shows a rich
collection of fine-scale filaments and shear layers. The peak current density
remains comparable to that at earlier times, and its evolution stabilizes.
Overall, the evolution of the magnetic topology is qualitatively consistent with
the reconnection geometries reported by Liu and Xu~\cite{liu2017}.

\begin{figure}[h]
    \centering
    \includegraphics[width=0.99\linewidth]{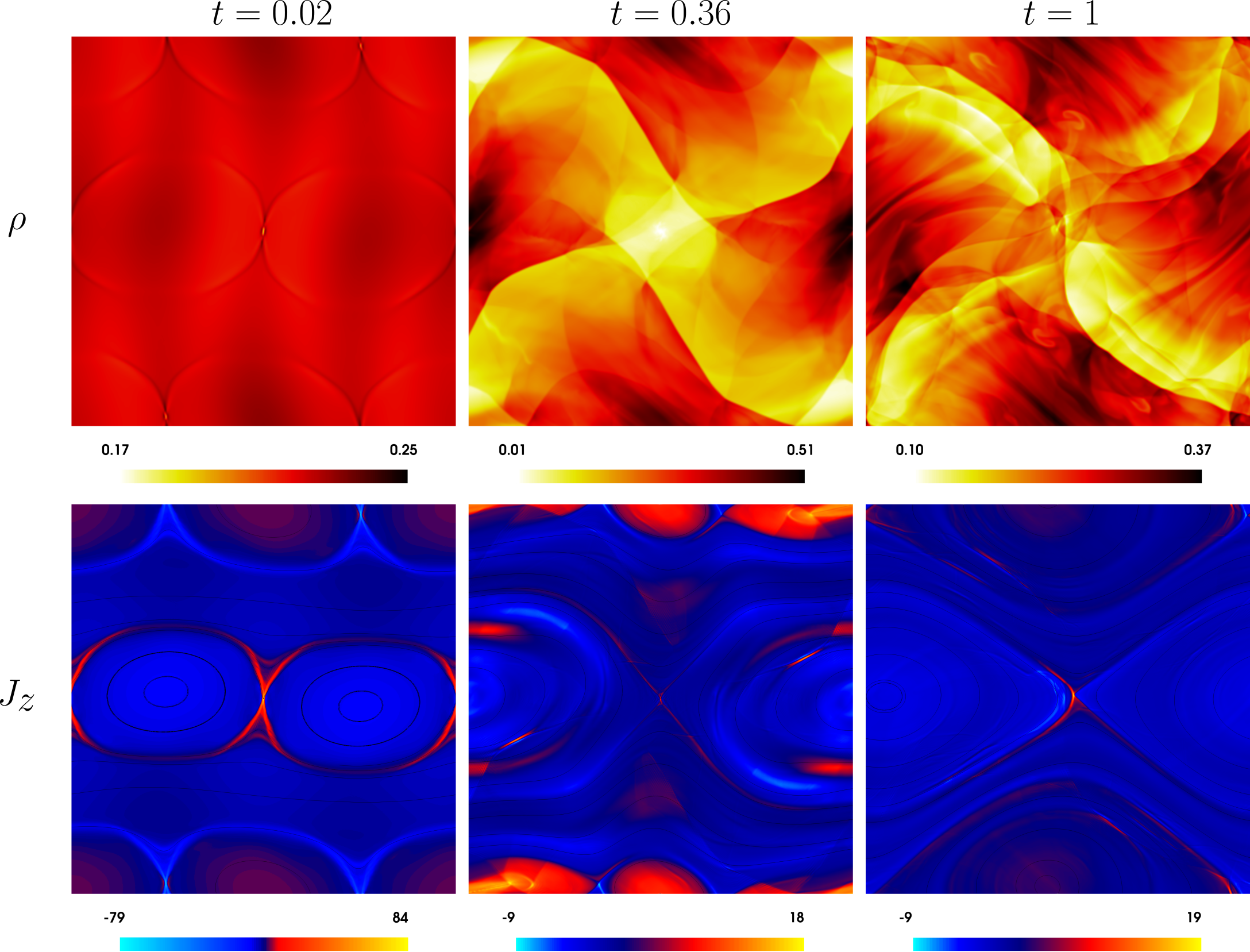}
    \caption{Snapshots of the density and out-of-plane current density for the
    Orszag--Tang simulation at $t=0.02$, $0.36$, and $1.0$ on a mesh with
    $724\times724$ elements using $d_i=0.5$. Rapid magnetic reconnection
produces a
    primary X-point and intense current sheets at early times (left). As
    reconnection proceeds, a low-density core and GEM-like outflow jets develop
    (middle). By $t=1.0$, the flow has transitioned into a fully turbulent state
    characterized by asymmetric density filaments and persistent current sheets
    (right).}
    \label{fig:OTfine}
\end{figure}

\subsection{Mesh behaviour study}
\label{sec:mesh}

Magnetic reconnection is a process that involves large, localized gradients and
steep hyperbolic fronts in reduced regions of the computational domain. In
addition, the Hall term introduces a highly asymmetric and nonlinear
contribution to the dynamics. For that reason, we assess the sensitivity of the
numerical scheme with respect to mesh-imprint artefacts by executing the GEM
magnetic reconnection challenge across four distinct mesh configurations with a
comparable number of degrees of freedom:
\begin{enumerate}
\item[\#1] \textit{Criss-cross structured mesh.} Completely symmetric. The mesh
consists of 256$\times$256 rectangular cells, each divided into two elements in
an alternating, symmetrical fashion, giving a total of 65,792 DOF for the
scalar finite element space $\FESpaceHypComp$. 
\item[\#2] \textit{Directionally-biased structured mesh.} Also built from
256$\times$256 rectangular cells, each subdivided into two triangular elements,
with all diagonals oriented to the right.
\item[\#3] \textit{Isotropic unstructured mesh.} Generated with a
Frontal-Delaunay algorithm, yielding nearly equilateral triangles throughout the
domain. The number of degrees of freedom is 65,444 DOF for the
scalar finite element space $\FESpaceHypComp$.
\item[\#4] \textit{Anisotropic unstructured mesh.} Generated with a Delaunay
algorithm under an anisotropic sizing field, so that triangles are stretched to
approximately 0.1 wide by 0.05 tall, matching the 2:1 aspect ratio used in the
structured meshes. This resulted in 72,918 DOF for the
scalar finite element space $\FESpaceHypComp$.
\end{enumerate}
The 2:1 aspect ratio is chosen deliberately: the diffusion layer that develops
along the $y$-axis requires finer resolution in that direction to capture
accurate reconnection rates, so meshes \#1, \#2, and \#4 all preserve this
anisotropy while mesh \#3, which is isotropic, does not.

Figure~\ref{fig:meshes} shows a snapshot of GEM at $t=35$ for the four described
meshes. Qualitatively, differences can be observed for each mesh. The
directionally-biased mesh \#2 breaks the symmetry of the reconnected structures,
mirroring the orientation of the mesh diagonals. The anisotropic unstructured
mesh \#4 struggles to form a single, well-defined X-line: the current sheet
appears more elongated, and two X-points form symmetrically about the domain
center before eventually merging, after which the reconnection region settles
with a slight offset from the center. We also note that the reconnected
region is (somewhat) more singular in mesh \#1 than meshes \#2, \#3 and \#4. The
singular nature of this solution caught our attention in our early
attempts at computing this solution without artificial viscosity, requiring a
significant number of Newton iterations.

The reconnected flux for each mesh is shown in
Figure~\ref{fig:meshreconnection}. Despite the differences noted above, all four
meshes eventually produce comparable reconnection rates. The main sensitivity
is found in the onset of fast reconnection. Mesh \#4 shows the most pronounced
delay, consistent with the transient double-X-point behavior: the merging of the
two X-points postpones the onset of fast reconnection relative to meshes \#1
and \#2. Mesh \#3 shows a smaller, secondary delay, which we attribute to its
isotropic triangles under-resolving the diffusion layer relative to the
anisotropic meshes, despite having a comparable overall number of degrees of
freedom.

\begin{figure}[h]
    \centering

    % First row
    \begin{subfigure}{0.47\textwidth}
        \centering
        \includegraphics[width=\linewidth]{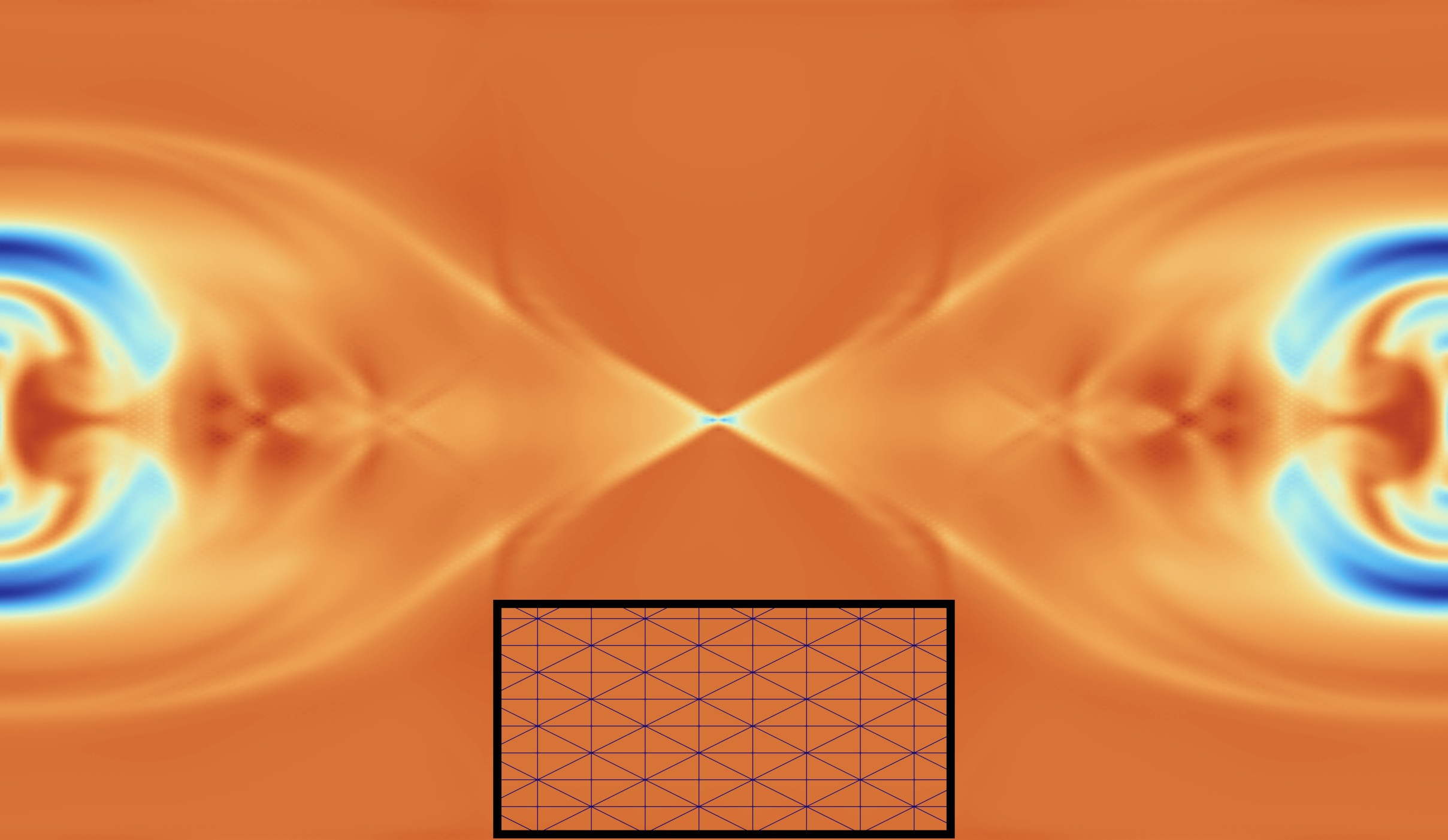}
        \caption{Criss-cross structured mesh}
        \label{fig:mesh1}
    \end{subfigure}
    \hfill
    \begin{subfigure}{0.47\textwidth}
        \centering
        \includegraphics[width=\linewidth]{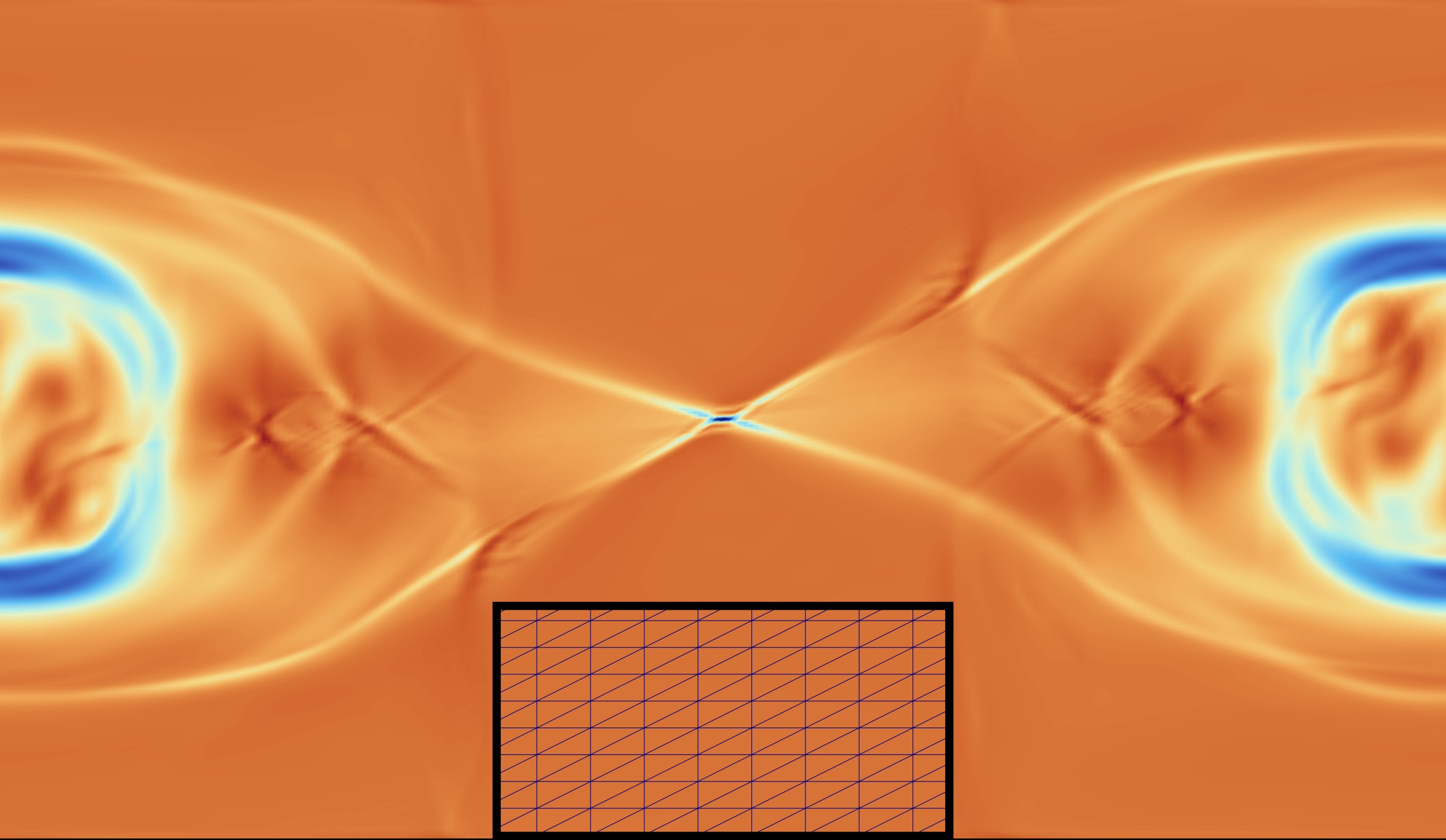}
        \caption{Directionally-biased structured mesh}
        \label{fig:mesh2}
    \end{subfigure}

    \vspace{0.5em}

    % Second row
    \begin{subfigure}{0.47\textwidth}
        \centering
        \includegraphics[width=\linewidth]{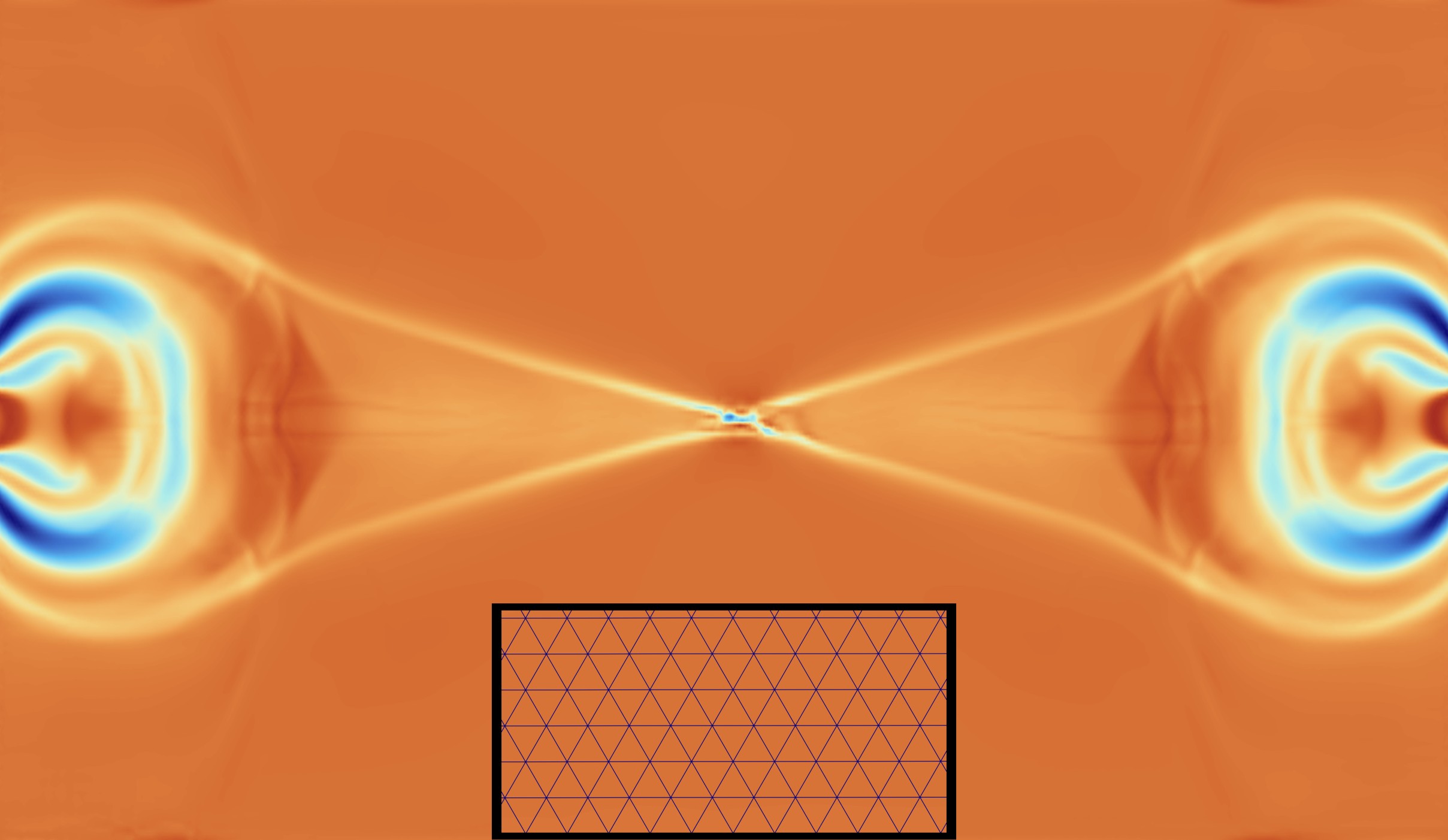}
        \caption{Isotropic unstructured mesh}
        \label{fig:mesh3}
    \end{subfigure}
    \hfill
    \begin{subfigure}{0.47\textwidth}
        \centering
        \includegraphics[width=\linewidth]{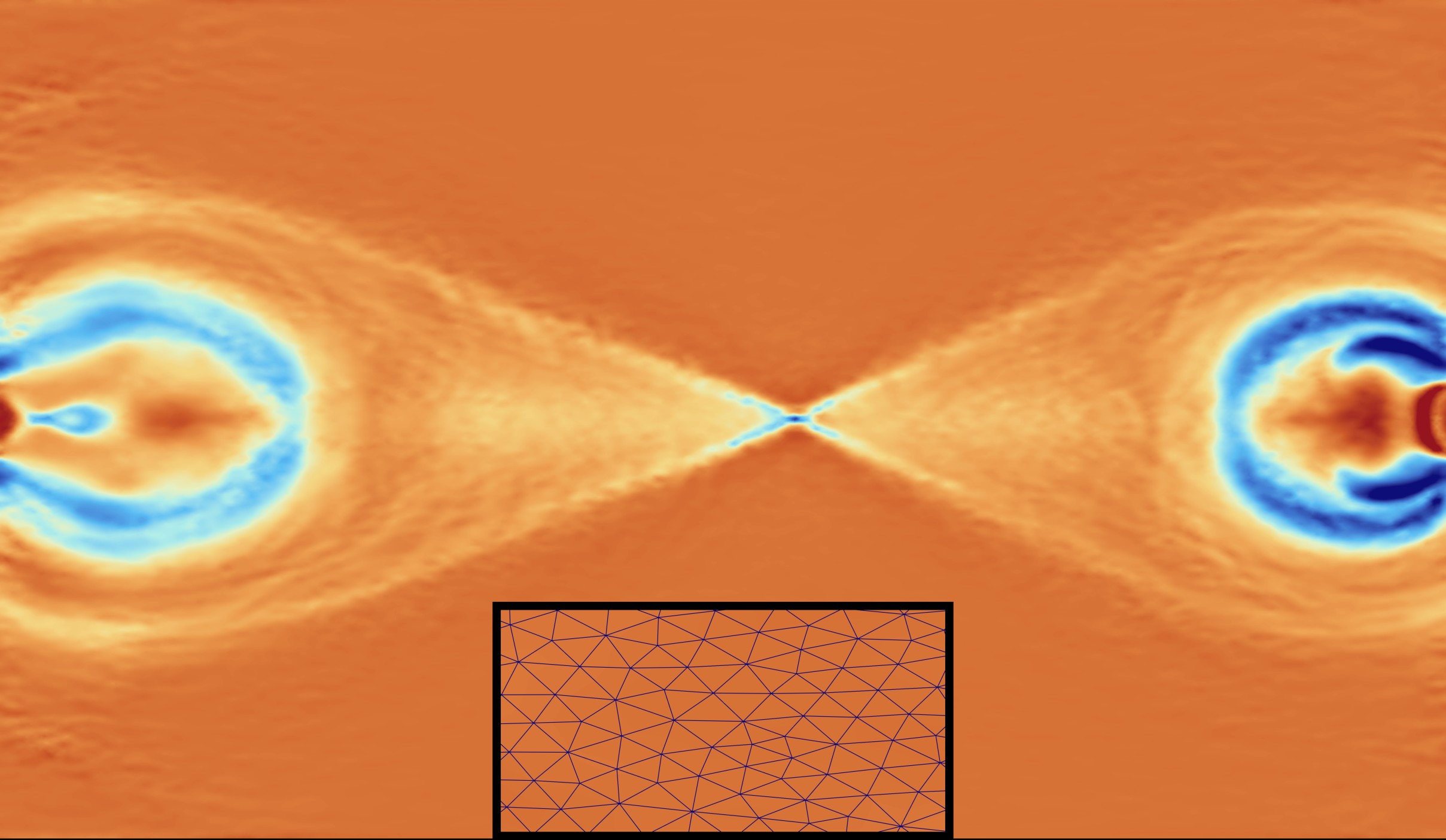}
        \caption{Anisotropic unstructured mesh}
        \label{fig:mesh4}
    \end{subfigure}

    \caption{Comparison of the GEM magnetic reconnection challenge for three
mesh topologies at $t=35$. The number of degrees of freedom used for (a) and (b) is
65,792, while 65,444 for (c) and 72,918 for (d).}
    \label{fig:meshes}
\end{figure}

\begin{figure}[h]
    \centering
    \includegraphics[width=0.99\linewidth]{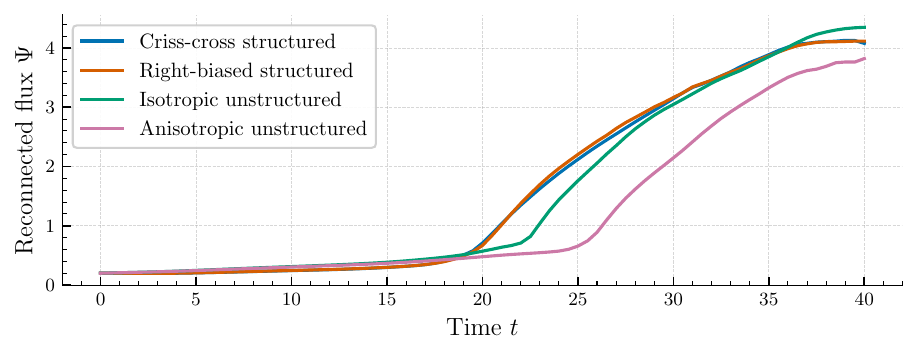}
    \caption{Reconnection rate comparison between different mesh topologies for
GEM magnetic reconnection challenge: Criss-cross structured, right-biased
structured, isotropic unstructured and anisotropic unstructured.}
    \label{fig:meshreconnection}
\end{figure}

% : isotropic structured, anisotropic structured,
% fully unstructured

%%%%%%%%%%%%%%%%%%%%%%%%%%%%%%%%%%%%%%%%%%%%%%%%%%%%%%%%%%%%%%%%%%%%%%%%%%%%%%%%
%%%%%%%%%%%%%%%%%%%%%%%%%%%%%%%%%%%%%%%%%%%%%%%%%%%%%%%%%%%%%%%%%%%%%%%%%%%%%%%%
%%%%%%%%%%%%%%%%%%%%%%%%%%%%%%%%%%%%%%%%%%%%%%%%%%%%%%%%%%%%%%%%%%%%%%%%%%%%%%%%

% \newpage
\section{Acknowledgments}

IT wants to acknowledge the continuous support of NSF grant DMS-2409841;
Sandia National Laboratories LDRD contract agreement \#1964744, award number
\#2644205; and Simons Foundation Travel Award for Mathematicians. 
MN and RV are supported by the Swedish Research Council (VR)
under grant numbers 2021-04620.

% \newpage

\appendix

\section{Thermodynamics and equations of state}\label{app:thermo}

A thermal Equation of State\footnote{The widely used acronym for the Equation
of State is EOS.} is a 2-dimensional manifold embedded in $\mathbb{R}^3$. More
precisely, such a manifold is given by the set of points
\cite{Callen1991, Lebon2008}:
\begin{align}
\label{EOSmanifold}
(\specv, e, s(\specv, e)) \subset \mathbb{R}^3
\end{align}
where $\specv = \tfrac{1}{\rho}$ is the specific volume, $e =
\tfrac{\totme}{\rho} - \frac{1}{2} |\vel|^2$ is the specific internal energy,
and $s(\specv, e):\mathbb{R}^+ \times \mathbb{R}^+ \rightarrow \mathbb{R}$ is
the specific entropy. From expression \eqref{EOSmanifold} it is tacitly
understood that $\specv$ and $e$ are the independent variables while $s$ is the
dependent variable. For any practical purpose, we may say that the specific
entropy $s(\specv, e)$ \emph{is} the EOS, since it is all you
need to describe the manifold \eqref{EOSmanifold}. For instance, for the
case of the Nobel-Abel-Stiffened-Gas, the specific entropy is given by
\cite{Saurel2016}:
\begin{align*}
s(\rho, \specinte) &= c_v \ln \Big( (\gamma - 1) \frac{\rho(e - q) +
p_{\infty} (\rho b - 1)}{1 - \rho b} \Big) - c_v \gamma \ln
\Big(\frac{(\gamma - 1) c_v \rho}{1 - \rho b} \Big) + s_0.
\end{align*}
where $c_v > 0$, $0 \leq b < +\infty$, $q > 0 $, $p_{\infty} \in
\mathbb{R}$, and $1 < \gamma \leq \tfrac{5}{3}$. For the very specific case of
$b = 0$, $q = 0$ and $p_{\infty} = 0$, the NASG EOS becomes the well-known
ideal gas specific entropy. In broad terms, an EOS describes all
thermodynamically accessible states of the substance or fluid: i.e. not every
triple of points $(\specv,e,s) \in \mathbb{R}^+ \times \mathbb{R}^+ \times
\mathbb{R}$ represents an accessible thermodynamical state.

 The pressure formula is a direct consequence of the
Gibbs identity (an exact differential, see \cite{Callen1991}). More precisely,
we have that the Gibbs identity is given by:
\begin{align*}
 \mathrm{d}s = \frac{1}{\temp}\mathrm{d}e
 + \frac{p}{\temp} \mathrm{d}\specv
 \ \ \text{where} \ \ s = s(\specv, e) \, , \
\end{align*}
which immediately implies that:
\begin{align}
\label{PressureEpist}
\frac{\partial s}{\partial e} = \frac{1}{\theta} \ \ \text{and} \ \
\frac{\partial s}{\partial v} = \frac{p}{\temp}
\ \ \text{therefore} \ \
p = p(\specv, e) = - \rho^2 \frac{\partial s}{\partial \rho}
\Big[\frac{\partial s}{\partial e}\Big]^{-1}.
\end{align}
In this paper we assume that the pressure is computed from its corresponding
specific entropy as described in \eqref{PressureEpist}. The precise formula of
the specific entropy $s(\specv, e)$ should be compatible with basic
thermodynamic constraints. We will make use of the following
standard assumptions:
\begin{itemize}
\item[\itemizebullet] \textit{Positivity of the temperature.} We
assume that:
\begin{align}
\label{posTempAssump}
\frac{\partial s}{\partial e} = \frac{1}{\theta} > 0
\ \ \text{for every} \ \
(\specv, e) \in \mathbb{R}^+ \times \mathbb{R}^+
\ \ \text{in the domain of} \ s(\specv,e)
\end{align}
Note, that in general, the domain of the specific entropy $s(\specv,
e)$ maybe a strict subset of the positive quadrant $\mathbb{R}^+
\times \mathbb{R}^+$, see \cite{Meni1989} for more details.
\item[\itemizebullet] \textit{Thermodynamic stability.} We assume that the
specific entropy is concave with respect to $\specv$ and $e$. More precisely,
we have that the following properties should hold \cite{Callen1991, Lebon2008}:
\begin{align}
\label{convexAssump}
\frac{\partial^2 s}{\partial^2 v} \leq 0 \ , \ \
\frac{\partial^2 s}{\partial^2 e} \leq 0 \ \ \text{and} \ \
\frac{\partial^2 s}{\partial^2 v}   \frac{\partial^2 s}{\partial^2 e}
- \Big(\frac{\partial^2 s}{\partial v \partial e}\Big)^2 \geq 0
\end{align}
The condition $\tfrac{\partial^2 s}{\partial^2 e} \leq 0$ is particularly
relevant in this paper, since it implies that the monotonicity condition:
\begin{align}
\label{tempMono}
 \frac{\partial }{\partial e}\theta(\specv,e) \geq 0
\end{align}
holds true. Monotonicity condition \eqref{tempMono} implies that our equation
of state is such that an increase in specific internal energy, at constant
density, can only produce a non-negative increment of temperature.
\end{itemize}

%%%%%%%%%%%%%%%%%%%%%%%%%%%%%%%%%%%%%%%%%%%%%%%%%%%%%%%%%%%%%%%%%%%%%%%%%%%%%%%%
%%%%%%%%%%%%%%%%%%%%%%%%%%%%%%%%%%%%%%%%%%%%%%%%%%%%%%%%%%%%%%%%%%%%%%%%%%%%%%%%
%%%%%%%%%%%%%%%%%%%%%%%%%%%%%%%%%%%%%%%%%%%%%%%%%%%%%%%%%%%%%%%%%%%%%%%%%%%%%%%%

\section{Jacobian of the Newton Iteration and its
properties}\label{app:jacobian}

We may rewrite the scheme \eqref{SourceSchemeVar} as: find $\{\vel_h^{n+1},
\Hfield_h^{n+1}\} \in \FESpaceHypComp^d \times \FESpaceHtangent$ satisfying
\begin{align}
\label{SolutionNewton00}
a([\vel_h^{n+1}, \Hfield_h^{n+1}], [\veltest_h, \Htest_h]) = f([\vel_h^{n},
\Hfield_h^{n}], [\veltest_h, \Htest_h]) \ \ \text{for all} \ \
[\veltest_h, \Htest_h] \in \FESpaceHypComp^d \times \FESpaceHtangent
\end{align}
where $a([\vel_h^{n+1}, \Hfield_h^{n+1}], [\veltest_h, \Htest_h])$ is a
nonlinear map defined by:
\begin{align*}
&a([\vel_h^{n+1}, \Hfield_h^{n+1}], [\veltest_h, \Htest_h]) := \langle \rho_h^n
\vel_h^{n+1} , \veltest_h  \rangle
+ \mu (\Hfield_h^{n+1}, \Htest_h)_{\Ltwo} \\
&- \tfrac{1}{4} \dt \mu ( \curl{}\Hfield_h^{n+1} \times \Hfield_h^{n},
\veltest_h )_{\Ltwo}
 - \tfrac{1}{4} \dt \mu (\curl{}\Hfield_h^{n+1} \times \Hfield_h^{n+1},
\veltest_h )_{\Ltwo} \\
&- \tfrac{1}{4} \dt \mu (\curl{}\Hfield_h^{n} \times \Hfield_h^{n+1}, \veltest_h
)_{\Ltwo}
+ \tfrac{1}{4} \dt \mu (\curl{}\Htest_h \times \Hfield_h^{n} , \vel_h^{n+1}
)_{\Ltwo} \\
&+ \tfrac{1}{4} \dt \mu (\curl{}\Htest_h \times \Hfield_h^{n+1} , \vel_h^{n}
)_{\Ltwo}
+ \tfrac{1}{4} \dt \mu (\curl{}\Htest_h \times \Hfield_h^{n+1} , \vel_h^{n+1}
)_{\Ltwo} \\
&+ \tfrac{1}{2} \dt  (\resist_h \curl{}\Hfield_h^{n+1},\curl{}\Htest_h)_{\Ltwo}
+ \tfrac{1}{4} \mu d_i \dt  \big(\tfrac{1}{\rho_h^n} \curl{}\Hfield_h^{n}
\times \Hfield_h^{n+1}, \curl {}\Htest_h \big)_{\Ltwo} \\
&+ \tfrac{1}{4} \mu d_i \dt  \big(\tfrac{1}{\rho_h^n} \curl{}\Hfield_h^{n+1}
\times \Hfield_h^{n}, \curl {}\Htest_h \big)_{\Ltwo} \\
&+ \tfrac{1}{4} \mu d_i \dt \big(\tfrac{1}{\rho_h^n} \curl{}\Hfield_h^{n+1}
\times \Hfield_h^{n+1}, \curl {}\Htest_h \big)_{\Ltwo}
\end{align*}
while $f([\vel_h^{n},\Hfield_h^{n}], [\veltest_h, \Htest_h])$ is defined by
\begin{align*}
& f([\vel_h^{n},\Hfield_h^{n}], [\veltest_h, \Htest_h])
:=
\langle \rho_h^n
\vel_h^n, \veltest_h  \rangle + \mu (\Hfield_h^n ,
\Htest_h)_{\Ltwo} \\
&\ \ \ + \tfrac{1}{4} \dt \mu (\curl{}\Hfield_h^{n} \times
\Hfield_h^{n},\veltest_h
)_{\Ltwo}
- \tfrac{1}{4} \dt \mu (\curl{}\Htest_h \times \Hfield_h^{n},
\vel_h^{n})_{\Ltwo} \\
& \ \ \ - \tfrac{1}{2} \dt  (\resist_h \curl{}\Hfield_h^{n},
\curl{}\Htest_h)_{\Ltwo}
- \tfrac{1}{4} \mu d_i \dt \big(\tfrac{1}{\rho_h^n} \curl{}\Hfield_h^{n}
\times \Hfield_h^{n}, \curl {}\Htest_h \big)_{\Ltwo}
\end{align*}
The solution process of problem \eqref{SolutionNewton00} may be achieved using
Newton's method, which consists of computing the corrections of the $k$-th
iteration state $[\vel_h^{k}, \Hfield_h^{k}]$ as
\begin{align*}
[\vel_h^{k+1}, \Hfield_h^{k+1}] := [\vel_h^{k} + \NewtonInc \vel_h^{k},
\Hfield_h^{k+1}+ \NewtonInc \Hfield_h^{k}]
\end{align*}
where $[\NewtonInc\vel_h^{k}, \NewtonInc\Hfield_h^{k}]$ is the solution of the
following linear variational problem:
\begin{align*}
a([\vel_h^{k}, \Hfield_h^{k}], [\veltest_h, \Htest_h])
+ j([\NewtonInc\vel_h^{k}, \NewtonInc\Hfield_h^{k}], [\veltest_h, \Htest_h]) =
f([\vel_h^{n}, \Hfield_h^{n}], [\veltest_h, \Htest_h]) \, ,
\end{align*}
or equivalently reorganized as:
\begin{align}
\label{NewtonIteration}
\left\{
\begin{aligned}
&\text{find } \{\NewtonInc\vel_h^{k},
\NewtonInc\Hfield_h^{k}\} \in \FESpaceHypComp^d \times \FESpaceHtangent
\text{ satisfying} \\
& j([\NewtonInc\vel_h^{k}, \NewtonInc\Hfield_h^{k}], [\veltest_h, \Htest_h]) =
f([\vel_h^{n}, \Hfield_h^{n}], [\veltest_h, \Htest_h])
- a([\vel_h^{k}, \Hfield_h^{k}], [\veltest_h, \Htest_h]) \, .
\end{aligned}
\right .
\end{align}
Here $j([\NewtonInc\vel_h^{k}, \NewtonInc\Hfield_h^{k}], [\veltest_h,
\Htest_h])$ is the Jacobian, a bilinear form defined as:
\begin{align*}
\begin{gathered}
j([\NewtonInc\vel_h^{k}, \NewtonInc\Hfield_h^{k}], [\veltest_h, \Htest_h]) :=
g'(s)|_{s=0} \\
\text{where } g(s) =
a([\vel_h^{k} + s \NewtonInc\vel_h^{k}, \Hfield_h^{k} + s
\NewtonInc\Hfield_h^{k}], [\veltest_h, \Htest_h]).
\end{gathered}
\end{align*}
Using this definition we obtain:
\begin{align}
\label{JacobianBilinear}
\begin{aligned}
& j([\NewtonInc\vel_h^{k}, \NewtonInc\Hfield_h^{k}], [\veltest_h, \Htest_h]) :=
\langle \rho_h^n \NewtonInc \vel_h^{k} , \veltest_h \rangle
+ \mu (\NewtonInc\Hfield_h^{k}, \Htest_h )_{\Ltwo} \\
& - \tfrac{1}{4} \dt \mu (
\curl{}\NewtonInc \Hfield_h^{k} \times \Hfield_h^{n}, \veltest_h )_{\Ltwo}
- \tfrac{1}{4} \dt \mu
(\curl{}\Hfield_h^{k} \times \NewtonInc\Hfield_h^{k}, \veltest_h )_{\Ltwo} \\
& - \tfrac{1}{4} \dt \mu
(\curl{}\NewtonInc\Hfield_h^{k} \times \Hfield_h^{k}, \veltest_h )_{\Ltwo}
- \tfrac{1}{4} \dt \mu (\curl{}\Hfield_h^{n} \times \NewtonInc\Hfield_h^{k},
\veltest_h )_{\Ltwo} \\
& + \tfrac{1}{4} \dt \mu (\curl{}\Htest_h \times \Hfield_h^{n} , \NewtonInc
\vel_h^{k} )_{\Ltwo}
+ \tfrac{1}{4} \dt \mu (\curl{}\Htest_h \times \NewtonInc\Hfield_h^{k},
\vel_h^{n} )_{\Ltwo} \\
&+ \tfrac{1}{4} \dt \mu (\curl{}\Htest_h \times \Hfield_h^{k}, \NewtonInc
\vel_h^{k} )_{\Ltwo}
+ \tfrac{1}{4} \dt \mu (\curl{}\Htest_h \times \NewtonInc\Hfield_h^{k},
\vel_h^{k})_{\Ltwo} \\
& + \tfrac{1}{2} \dt
(\resist_h \curl{}\NewtonInc\Hfield_h^{k},\curl{}\Htest_h)_{\Ltwo}
+ \tfrac{1}{4} \mu d_i \dt  \big(\tfrac{1}{\rho_h^n}
\curl{}\Hfield_h^{n} \times \NewtonInc\Hfield_h^{k},
\curl {}\Htest_h \big)_{\Ltwo} \\
& + \tfrac{1}{4} \mu d_i \dt  \big(\tfrac{1}{\rho_h^n}
\curl{}\NewtonInc\Hfield_h^{k} \times \Hfield_h^{n}, \curl {}\Htest_h
\big)_{\Ltwo} \\
& + \tfrac{1}{4} \mu d_i \dt \big(\tfrac{1}{\rho_h^n} \curl{}\Hfield_h^{k}
\times \NewtonInc\Hfield_h^{k}, \curl {}\Htest_h \big)_{\Ltwo} \\
& + \tfrac{1}{4} \mu d_i \dt \big(\tfrac{1}{\rho_h^n}
\curl{} \NewtonInc\Hfield_h^{k} \times \Hfield_h^{k} ,
\curl{} \Htest_h \big)_{\Ltwo} .
\end{aligned}
\end{align}
We would like to understand the coercivity properties of the Jacobian
$j([\NewtonInc\vel_h^{k}, \NewtonInc\Hfield_h^{k}], [\veltest_h, \Htest_h])$.
Setting $[\veltest_h, \Htest_h] = [\NewtonInc\vel_h^{k},
\NewtonInc\Hfield_h^{k}]$ in the previous expression we obtain:
\begin{align*}
\begin{aligned}
& j([\NewtonInc\vel_h^{k}, \NewtonInc\Hfield_h^{k} ], [\NewtonInc\vel_h^{k},
\NewtonInc\Hfield_h^{k}]) := \\
& \langle \rho_h^n \NewtonInc \vel_h^{k} , \NewtonInc\vel_h^{k} \rangle
+ \mu (\NewtonInc\Hfield_h^{k}, \NewtonInc\Hfield_h^{k} )_{\Ltwo}
+ \tfrac{1}{2} \dt (\resist_h \curl{}\NewtonInc\Hfield_h^{k},
\curl{}\NewtonInc\Hfield_h^{k})_{\Ltwo} \\
& - \tfrac{1}{4} \dt \mu (\curl{}\Hfield_h^{n} \times \NewtonInc\Hfield_h^{k},
\NewtonInc\vel_h^{k})_{\Ltwo}
- \tfrac{1}{4} \dt \mu
(\curl{}\Hfield_h^{k} \times \NewtonInc\Hfield_h^{k}, \NewtonInc\vel_h^{k}
)_{\Ltwo} \\
& + \tfrac{1}{4} \dt \mu (\curl{}\NewtonInc\Hfield_h^{k} \times
\NewtonInc\Hfield_h^{k}, \vel_h^{n} )_{\Ltwo}
+ \tfrac{1}{4} \dt \mu (\curl{}\NewtonInc\Hfield_h^{k} \times
\NewtonInc\Hfield_h^{k}, \vel_h^{k})_{\Ltwo} \\
& + \tfrac{1}{4} \mu d_i \dt \big(\tfrac{1}{\rho_h^n}
\curl{}\Hfield_h^{n} \times \NewtonInc\Hfield_h^{k},
\curl{}\NewtonInc\Hfield_h^{k} \big)_{\Ltwo} \\
&+ \tfrac{1}{4} \mu d_i \dt \big(\tfrac{1}{\rho_h^n} \curl{}\Hfield_h^{k}
\times \NewtonInc\Hfield_h^{k}, \curl{}\NewtonInc\Hfield_h^{k} \big)_{\Ltwo}.
\end{aligned}
\end{align*}
With the aid of properties of the triple product this can be further rewritten
as:
\begin{align}
\label{CoercivityAttempt}
\begin{aligned}
& j([\NewtonInc\vel_h^{k}, \NewtonInc\Hfield_h^{k} ], [\NewtonInc\vel_h^{k},
\NewtonInc\Hfield_h^{k}]) := \\
& \langle \rho_h^n \NewtonInc \vel_h^{k} , \NewtonInc\vel_h^{k} \rangle
+ \mu (\NewtonInc\Hfield_h^{k}, \NewtonInc\Hfield_h^{k} )_{\Ltwo}
+ \tfrac{1}{2} \dt (\resist_h \curl{}\NewtonInc\Hfield_h^{k},
\curl{}\NewtonInc\Hfield_h^{k})_{\Ltwo} \\
& - \tfrac{1}{4} \dt \mu (\curl{}\Hfield_h^{n} \times \NewtonInc\Hfield_h^{k},
\NewtonInc\vel_h^{k})_{\Ltwo} \\
& - \tfrac{1}{4} \dt \mu
(\curl{}\Hfield_h^{k} \times \NewtonInc\Hfield_h^{k}, \NewtonInc\vel_h^{k}
)_{\Ltwo} \\
& + \tfrac{1}{4} \mu \dt \big((\tfrac{d_i}{\rho_h^n}
\curl{}\Hfield_h^{n} - \vel_h^{n} )\times \NewtonInc\Hfield_h^{k},
\curl{}\NewtonInc\Hfield_h^{k} \big)_{\Ltwo} \\
& + \tfrac{1}{4} \mu \dt \big((\tfrac{d_i}{\rho_h^n} \curl{}\Hfield_h^{k} -
\vel_h^{k})
\times \NewtonInc\Hfield_h^{k}, \curl{}\NewtonInc\Hfield_h^{k} \big)_{\Ltwo}
\end{aligned}
\end{align}
Clearly, the first three terms in the right-hand side of
\eqref{CoercivityAttempt} are positive. However, the last four trilinear forms
are unsigned. Despite this, it is possible to show that there always exists
a sufficiently small time-step size such that the bilinear form
$j([\NewtonInc\vel_h^{k}, \NewtonInc\Hfield_h^{k}], [\veltest_h,
\Htest_h])$ is coercive. We will need to use the norm equivalence:
\begin{align}
\label{lumpingEstimate}
c_m \|\vel_h\|_{\Ltwo} \leq \langle \vel_h, \vel_h \rangle^{\frac{1}{2}} \leq
c_M \|\vel_h\|_{\Ltwo} \ \ \text{for all }\vel_h \in \FESpaceHypComp^d
\end{align}
in order to prove this statement. The proof of \eqref{lumpingEstimate} is
standard and can be found in numerous references such as \cite{Ciarlet1978,
Bartels2015}.

\begin{proposition}\label{PropJacobian} Assume that the following holds true:
\begin{align*}
\|\curl{}\Hfield_h^{n}\|_{L^\infty(\domain)} \leq c_h \ \ &\text{and} \ \
\|\curl{}\Hfield_h^{k}\|_{L^\infty(\domain)} \leq c_h \, , \\
\|\tfrac{d_i}{\rho_h^n}
\curl{}\Hfield_h^{n} - \vel_h^{n}\|_{L^\infty(\domain)}
\leq c_{e} \ \ &\text{and} \ \
\|\tfrac{d_i}{\rho_h^n}
\curl{}\Hfield_h^{n} - \vel_h^{k}\|_{L^\infty(\domain)} \leq c_{e}
\, ,
\end{align*}
for some positive bounded constants $c_h < +\infty$ and $c_{e} < +\infty$.
Then we have that the following coercivity estimate holds:
\begin{align}
\label{CoercivityEstimateTstep}
\begin{aligned}
& j([\NewtonInc\vel_h^{k}, \NewtonInc\Hfield_h^{k} ], [\NewtonInc\vel_h^{k},
\NewtonInc\Hfield_h^{k}]) \geq
\big(c_m^2 \rho_{\text{min}}^n - \tfrac{1}{4} \dt \mu c_h \big)
\|\NewtonInc\vel_h^{k}\|_{\Ltwo}^2 \\
& \ \ \ \ \ + \mu \big(1 - \tfrac{1}{4} (c_h \dt + c_e \dt^{\frac{1}{2}})
\big) \|\NewtonInc\Hfield_h^{k}\|_{\Ltwo}^2
+ \tfrac{1}{2} \dt \big(\resist_{\text{min}} - \tfrac{1}{2} \mu c_e
\dt^\frac{1}{2} \big)
\|\curl{}\NewtonInc\Hfield_h^{k}\|_{\Ltwo}^2
\end{aligned}
\end{align}
where
\begin{align*}
\rho_{\text{min}}^n = \min_{i \in \HypVertices} \rho_i^n
\ \ \text{and} \ \
\resist_{\text{min}} = \min_{\xcoord} \resist_h(\xcoord)
\end{align*}
\end{proposition}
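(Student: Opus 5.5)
Starting from identity \eqref{CoercivityAttempt}, we keep the three nonnegative terms and bound each of the four unsigned trilinear terms from below. The first term is handled with the lumping equivalence \eqref{lumpingEstimate}: since $\rho_i^n \geq \rho_{\text{min}}^n$, we have $\langle \rho_h^n \NewtonInc\vel_h^{k}, \NewtonInc\vel_h^{k}\rangle = \sum_i m_i \rho_i^n |\NewtonInc\vel_i^k|^2 \geq \rho_{\text{min}}^n c_m^2 \|\NewtonInc\vel_h^{k}\|_{\Ltwo}^2$. For the resistive term we simply use $\resist_h \geq \resist_{\text{min}}$ pointwise.

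The two Lorentz-type terms, $-\tfrac14\dt\mu(\curl{}\Hfield_h^{n}\times\NewtonInc\Hfield_h^{k},\NewtonInc\vel_h^{k})_{\Ltwo}$ and the analogous one with $\Hfield_h^k$, will be bounded by Hölder with $L^\infty$ on the curl factor: each is at least $-\tfrac14\dt\mu c_h\|\NewtonInc\Hfield_h^{k}\|_{\Ltwo}\|\NewtonInc\vel_h^{k}\|_{\Ltwo}$. Then Young's inequality $ab\le \tfrac12(a^2+b^2)$ applied to each of them, and summing the two, produces $-\tfrac14\dt\mu c_h(\|\NewtonInc\vel_h^{k}\|^2+\|\NewtonInc\Hfield_h^{k}\|^2)$. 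This accounts exactly for the $-\tfrac14\dt\mu c_h$ in the velocity coefficient and the $-\tfrac14 c_h\dt$ inside the $\mu(\cdots)$ factor of the $\Hfield$ coefficient.

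The two electron-velocity terms $\tfrac14\mu\dt((\tfrac{d_i}{\rho_h^n}\curl{}\Hfield_h^{n}-\vel_h^{n})\times\NewtonInc\Hfield_h^{k},\curl{}\NewtonInc\Hfield_h^{k})_{\Ltwo}$ (and the one with index $k$) are the delicate ones: they couple $\NewtonInc\Hfield$ with its curl, so a naive Young split would need a resistive term of size $\dt$ to absorb a curl-squared term with weight $\dt$ — too strong. Instead we use a $\dt$-weighted Young inequality: $\dt\,c_e\,ab \leq \tfrac12 c_e(\dt^{1/2}a^2 + \dt^{3/2} b^2)$ with $a=\|\NewtonInc\Hfield_h^{k}\|$, $b=\|\curl{}\NewtonInc\Hfield_h^{k}\|$. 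Summing both terms with the prefactor $\tfrac14\mu$ gives $-\tfrac14\mu c_e\dt^{1/2}\|\NewtonInc\Hfield_h^{k}\|^2 - \tfrac14\mu c_e\dt^{3/2}\|\curl{}\NewtonInc\Hfield_h^{k}\|^2$, which matches $\mu(1-\tfrac14c_e\dt^{1/2})$ and $\tfrac12\dt(\resist_{\text{min}}-\tfrac12\mu c_e\dt^{1/2})$ in \eqref{CoercivityEstimateTstep}. Collecting coefficients then gives the claim.

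The main obstacle is this last step: choosing the weighting in Young's inequality so that the curl-squared contribution scales like $\dt^{3/2}$ (absorbable by $\tfrac12\dt\resist_{\text{min}}$ for small $\dt$) while the $L^2$ part scales like $\dt^{1/2}$ (absorbable by the mass term). A minor point to check is that the hypothesis for the $k$-term, as stated, uses $\curl{}\Hfield_h^n$ rather than $\curl{}\Hfield_h^k$; I would read it as the bound on the field actually appearing in \eqref{CoercivityAttempt}.
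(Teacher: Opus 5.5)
Your proposal is correct and is essentially the paper's own proof. Both use H\"older/Cauchy--Schwarz with the $L^\infty$ bounds, Young's inequality with weight $1$ on the two Lorentz terms and weight $\sqrt{\dt}$ on the two electron-velocity terms (the paper's $\epsilon_1=\epsilon_2=1$, $\epsilon_3=\epsilon_4=\sqrt{\dt}$), and $\rho_i^n\geq\rho_{\text{min}}^n$ combined with \eqref{lumpingEstimate}; your reading of the last hypothesis as a bound on $\tfrac{d_i}{\rho_h^n}\curl{}\Hfield_h^{k}-\vel_h^{k}$ is exactly what the paper's proof tacitly uses for the fourth trilinear term.
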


\begin{proof} The proof is elementary and follows using Cauchy-Schwarz and
Young's inequality estimates on the last four terms of \eqref{CoercivityAttempt}:
\begin{align*}
& \big| \tfrac{1}{4} \dt \mu (\curl{}\Hfield_h^{n} \times
\NewtonInc\Hfield_h^{k},
\NewtonInc\vel_h^{k})_{\Ltwo}\big|
\leq
\tfrac{1}{4} \dt \mu c_h \big(
\tfrac{\epsilon_1}{2} \|\NewtonInc\Hfield_h^{k}\|^2
+ \tfrac{1}{2 \epsilon_1} \|\NewtonInc\vel_h^{k}\|^2 \big) \\
&\big| \tfrac{1}{4} \dt \mu
(\curl{}\Hfield_h^{k} \times \NewtonInc\Hfield_h^{k}, \NewtonInc\vel_h^{k}
)_{\Ltwo}\big|
\leq
\tfrac{1}{4} \dt \mu c_h \big(\tfrac{\epsilon_2}{2}
\|\NewtonInc\Hfield_h^{k}\|^2
+ \tfrac{1}{2 \epsilon_2} \|\NewtonInc\vel_h^{k}\|^2 \big) \\
& |\tfrac{1}{4} \mu \dt \big((\tfrac{d_i}{\rho_h^n}
\curl{}\Hfield_h^{n} - \vel_h^{n} )\times \NewtonInc\Hfield_h^{k},
\curl{}\NewtonInc\Hfield_h^{k} \big)_{\Ltwo}|
\leq
\tfrac{1}{4} \mu \dt c_e \big(
\tfrac{\epsilon_3}{2}\|\curl{}\NewtonInc\Hfield_h^{k}\|^2
+ \tfrac{1}{2 \epsilon_3} \|\NewtonInc\Hfield_h^{k}\|^2\big)
\\
& |\tfrac{1}{4} \mu \dt \big((\tfrac{d_i}{\rho_h^n} \curl{}\Hfield_h^{k} -
\vel_h^{k})
\times \NewtonInc\Hfield_h^{k}, \curl{}\NewtonInc\Hfield_h^{k} \big)_{\Ltwo}|
\leq
\tfrac{1}{4} \mu \dt c_e \big(
\tfrac{\epsilon_4}{2}\|\curl{}\NewtonInc\Hfield_h^{k}\|^2
+ \tfrac{1}{2 \epsilon_4} \|\NewtonInc\Hfield_h^{k}\|^2\big)
\end{align*}
Then we choose:
\begin{align*}
\epsilon_1 = 1 \ , \ \
\epsilon_2 = 1 \ , \ \
\epsilon_3 = \epsilon_4 = \sqrt{\dt}
\end{align*}
Inserting these estimates, multiplied by $-1$ into the right-hand side of
\eqref{CoercivityAttempt}, using the estimate $\langle \rho_h^n \NewtonInc
\vel_h^{k} , \NewtonInc\vel_h^{k} \rangle \geq \rho_{\textit{min}}^n \langle
\NewtonInc \vel_h^{k} , \NewtonInc\vel_h^{k} \rangle$ together with lumping
estimate \eqref{lumpingEstimate}, and grouping the terms yields the result.
\end{proof}

In essence, estimate \eqref{CoercivityEstimateTstep} is telling us that there
always exists a time step size sufficiently small, such that the matrix
corresponding to the bilinear form \eqref{JacobianBilinear} is positive
definite. The last line of estimate \eqref{CoercivityEstimateTstep} also
motivates us to consider the development of artificial resistivities
proportional to $c_h$ and $c_e$. That is, an artificial resistivity
that improves local coercivity\footnote{Thereby, stability of the scheme} should
be proportional to $|\curl{}\Hfield|$ and/or $|\vel_h^{k} -
\tfrac{d_i}{\rho_h^n} \curl{}\Hfield_h^{n}|$.

% Though both wavespeeds are
% important
% Though, both bounds are important, $c_e$ is un
% upper bound on the electron velocity. We note that

Knowing that it is possible to reduce the time-step size in order to guarantee
invertibility of the Jacobian is somewhat comforting, but it's not perfectly
satisfactory. Such an approach could lead to a minuscule time-step size. It would
be interesting to know that we have other options at our disposal to enforce
invertibility of the Jacobian. In this regard, we note that estimate
\eqref{CoercivityEstimateTstep} is just a consequence of the choice of
parameters $\epsilon_1$, $\epsilon_2$, $\epsilon_3$ and $\epsilon_4$.
Ultimately, there is no optimal choice of parameters. By changing the options of
parameters; $\epsilon_1$, $\epsilon_2$, $\epsilon_3$ and $\epsilon_4$; we may
be able to arrive to a different understanding of what it takes to guarantee
invertibility of the Jacobian and stabilize the scheme. In this vein, we have
the following remark.

\begin{remark}[Alternative estimate]\label{RemarkAltEstimate} By changing the
choice of parameters; $\epsilon_1$, $\epsilon_2$, $\epsilon_3$ and $\epsilon_4$;
we can obtain an estimate that is slightly different from
\eqref{CoercivityEstimateTstep}. More precisely, if we choose $\epsilon_1 =
\epsilon_2 = \epsilon_3 = \epsilon_4 = 1$ we obtain the following (alternative)
estimate:
\begin{align}
\label{CoercivityEstimateAlt}
\begin{split}
& j([\NewtonInc\vel_h^{k}, \NewtonInc\Hfield_h^{k} ], [\NewtonInc\vel_h^{k},
\NewtonInc\Hfield_h^{k}]) \geq
\big(c_m^2 \rho_{\text{min}}^n - \tfrac{1}{4} \mu c_h \dt \big)
\|\NewtonInc\vel_h^{k}\|_{\Ltwo}^2 \\
& \ \ \ \ \ + \mu \big(1 - \tfrac{1}{4} \mu (c_h + c_e ) \dt \big)
\|\NewtonInc\Hfield_h^{k}\|_{\Ltwo}^2
+ \tfrac{1}{2} \dt \big(\resist_{\text{min}}  - \tfrac{1}{2} \mu c_e
\big) \|\curl{}\NewtonInc\Hfield_h^{k}\|_{\Ltwo}^2
\end{split}
\end{align}
Note that in this case shrinking the time step size may only have a limited
effect. More precisely, from the first and second line of
\eqref{CoercivityEstimateAlt} we gather than if we choose $\dt$ sufficiently
small we can guarantee coercivity for the terms $\|\NewtonInc\vel_h^{k}
\|_{\Ltwo}^2$ and $\|\NewtonInc\Hfield_h^{k}\|_{\Ltwo}^2$. However, from the
last line in \eqref{CoercivityEstimateAlt}, we realize that we can only obtain a
lower bound on the term $\|\curl{}\NewtonInc\Hfield_h^{k}\|_{\Ltwo}^2$ if the
resistivity is sufficiently large. More precisely the resistivity has to
satisfy the bound $\resist_{\text{min}} \geq \tfrac{1}{2} \mu c_e$. Therefore,
estimate \eqref{CoercivityEstimateAlt} hints at the idea that a combination of a
sufficiently small time-step and a sufficiently large artificial viscosity
should yield the right compromise.
\end{remark}

%%%%%%%%%%%%%%%%%%%%%%%%%%%%%%%%%%%%%%%%%%%%%%%%%%%%%%%%%%%%%%%%%%%%%%%%%%%%%%%%
%%%%%%%%%%%%%%%%%%%%%%%%%%%%%%%%%%%%%%%%%%%%%%%%%%%%%%%%%%%%%%%%%%%%%%%%%%%%%%%%
%%%%%%%%%%%%%%%%%%%%%%%%%%%%%%%%%%%%%%%%%%%%%%%%%%%%%%%%%%%%%%%%%%%%%%%%%%%%%%%%

\section{Two and a half space dimensions
implementation}\label{app:two_and_half}
The physics induced by the Hall term is intrinsically three-dimensional: even
for an initial magnetic field lying in a plane, the Hall term generates a
component normal to that plane. While the scheme described in
Section~\ref{Sec:NumericalScheme} is fully compatible with $d=3$, it is
customary in the literature to use the so-called \emph{2.5-$d$} formulation,
which we adopt for the numerical results presented in our work. It consists of a
hybrid approach between two and three dimensions that uses a planar
computational domain $\Omega\subset\mathbb{R}^2$ but keeps full three-component
vector fields $\mom$ and $\Hfield$, with vanishing out-of-plane derivative. Let
$\xcoord=(x, y)\in\Omega$ and $z$ be the out-of-plane coordinate. Then
\begin{align*}
    \Hfield(\xcoord,t) = \begin{bmatrix}
    \Hfield_{xy} \\
    \HfieldComponent_z
\end{bmatrix} = \begin{bmatrix}
    \HfieldComponent_x \\
    \HfieldComponent_y \\
    \HfieldComponent_z \end{bmatrix}, \qquad
\mom (\xcoord,t) = \begin{bmatrix}
    \mom_{xy}\\
    \momComponent_z
\end{bmatrix} = \begin{bmatrix}
    \momComponent_x \\
    \momComponent_y \\
    \momComponent_z \end{bmatrix}, \qquad \partial_z\equiv0.
\end{align*}
Here, $\Hfield_{xy}$ and $\mom_{xy}$ are 2-vectors fields representing the in-plane part, whereas $\HfieldComponent_z$ is the scalar out-of-plane part. The action of the curl and div operators becomes
\begin{align*}
    \halfCurl \Hfield :=
\begin{bmatrix}
    \partial_y\HfieldComponent_z \\
    -\partial_x\HfieldComponent_z \\
    \partial_x\HfieldComponent_y-\partial_y\HfieldComponent_x
\end{bmatrix}, \qquad 
\halfDiv\Hfield := \diver{}\Hfield_{xy}= \partial_x \HfieldComponent_x + \partial_y \HfieldComponent_y.
\end{align*}
In particular, $\halfDiv\Hfield$ depends only on $\Hfield_{xy}$, and is independent of $\HfieldComponent_z$. Therefore, according to Proposition~\ref{Prop:sourceupdate}, it is only required for the in-plane part $\Hfield_{xy}$ to be discretized in a $H(\curl{})$-conforming space for the involution constraint to hold. Thus, we can approximate $\HfieldComponent_z$ in the standard $\mathcal{C}^0$ Lagrange space $\FESpaceHypComp$. Note that although $\halfCurl\Hfield$ involves only $\HfieldComponent_{z}$ in its in-plane components, the nonlinear Hall terms $\halfCurl\Hfield \times \Hfield$ couple all three components of $\Hfield$ nontrivially through the cross product, and must be assembled using the full three-component field. We therefore define the joint magnetic field spaces
\begin{align*}
    \FESpaceH^{2.5} &= \FESpaceH^2 \oplus \FESpaceHypComp
    = \big\{ \Htest_h = (\Htest_{xy,h}, \HtestComponent_{z,h}) \ \big| \ \Htest_{xy,h}\in\FESpaceH^2, \ \HtestComponent_{z,h}\in\FESpaceHypComp \big\}, \\
    \FESpaceHtangent^{2.5} &= \big\{ \Htest_h \in \FESpaceH^{2.5} \, \big| \, \Htest_h \times
\normal = 0 \text{ on } \partial\domain \big\},
\end{align*}
here $\FESpaceH^2$ denotes the two-dimensional BDM space, see Section
\ref{sec:SpaceDisc}. The weak formulation is then posed as: Find
$\{\vel_h^{n+1}, \Hfield_h^{n+1}\} \in
\mathbb{V}_h^3 \times
\FESpaceHtangent^{2.5}$ such that
\begin{align*}
\left\{
\begin{aligned}
&\langle \rho_h^n (\vel_h^{n+1} - \vel_h^n), \veltest_h  \rangle
- \dt_n \mu ( (\halfCurl\Hfield_h^{n+\frac{1}{2}} \times
\Hfield_h^{n+\frac{1}{2}})
, \veltest_h )_{\Ltwo}
= \bzero \\
&\mu (\Hfield_h^{n+1} - \Hfield_h^n , \Htest_h)_{\Ltwo}
+ \dt_n \mu ((\halfCurl\Htest_h \times \Hfield_h^{n+\frac{1}{2}}) ,
\vel_h^{n+\frac{1}{2}} )_{\Ltwo} \\
& \ \ \ + \dt_n ( \resist_h^n \halfCurl\Hfield_h^{n+\frac{1}{2}} ,
\halfCurl\Htest_h)_{\Ltwo}
\\
&\ \ \ + \dt_n \mu d_i \big(\tfrac{1}{\rho_h^n}
(\halfCurl\Hfield_h^{n+\frac{1}{2}}
\times \Hfield_h^{n+\frac{1}{2}}), \halfCurl\Htest_h \big)_{\Ltwo}
= \bzero
\end{aligned}
\right.
\end{align*}
for all $\{\veltest_h, \Htest_h\} \in \mathbb{V}_h^3 \times
\FESpaceHtangent^{2.5}$.

% \section{Hyperbolic solver}\label{sec:HypSolver}

\bibliographystyle{plain}
\bibliography{biblio}

\end{document}